\documentclass[12pt,letterpaper]{amsart}
\usepackage[none]{hyphenat}
\usepackage[utf8]{inputenc}
\usepackage[T1]{fontenc}
\usepackage{amsfonts}
\usepackage[leqno]{amsmath}
\usepackage{amssymb, comment, float}
\usepackage[dvipsnames]{xcolor}
\usepackage[foot]{amsaddr}
\usepackage[shortlabels]{enumitem}
\usepackage{mathdots}
\usepackage{todonotes}
\usepackage[footskip=0.5in, headheight = 0.5in, top=1.25in, bottom=1.25in, right=1in, left=1in]{geometry}
\usepackage{hyperref}
\hypersetup{
colorlinks,
linkcolor=blue,
urlcolor=black,
citecolor=blue,}
\usepackage[center]{caption}
\usepackage{eufrak}
\usepackage{marvosym}
\usepackage{latexsym}
\usepackage{tikz}
\usepackage{subfig}
\usepackage{thm-restate}

\newtheorem{theorem}{Theorem}[section]
\newtheorem{lemma}[theorem]{Lemma}

\DeclareMathOperator{\tw}{tw}

\DeclareMathOperator{\cl}{cl}
\DeclareMathOperator{\lv}{lv}

\def\dd{\hbox{-}}

\newcommand{\mf}{\mathfrak}
\newcommand{\mca}{\mathcal}
\newcommand{\poi}{\mathbb N}

\newcommand{\blackref}[1]{\hypersetup{linkcolor=black}\ref{#1}\hypersetup{linkcolor=blue}}

\newcounter{tbox}
\newcommand{\sta}[1]{\medskip\medskip\refstepcounter{tbox}\noindent{\parbox{\textwidth}{(\thetbox) \textit{#1}}}\vspace*{0.3cm}}
\newcommand{\mylongtitle}[1]{%
  \ifodd\value{page}%
    \protect\parbox{0.97\linewidth}{#1}\hfill%
  \else%
    \hfill\protect\parbox{0.97\linewidth}{#1}%
  \fi%
}

\title[Induced Cycles of Many Lengths]{Induced Cycles of Many Lengths}

\author{Maria Chudnovsky$^{\dagger}$}
\author{Ilya Maier$^{\ddagger}$}

\thanks{$^{\dagger}$ Princeton University, Princeton, NJ, USA. Supported by NSF Grant DMS-2348219, NSF Grant CCF-2505100, AFOSR grant FA9550-25-1-0275 and a Guggenheim Fellowship}
\thanks{$^{\ddagger}$ Princeton University, Princeton, NJ, USA. Supported by a scholarship of the German Academic Exchange Service (DAAD)}
\date {\today}
\begin{document}
\maketitle

\begin{abstract}

Let $G$ be a graph and let $\cl(G)$ be the number of distinct induced cycle lengths in $G$. We show that for $c,t\in \poi$, every graph $G$ that does not contain an induced subgraph isomorphic to $K_{t+1}$ or $K_{t,t}$ and satisfies $\cl(G) \le c$ has bounded treewidth. As a consequence, we obtain a polynomial-time algorithm for deciding whether a graph $G$ contains induced cycles of at least three distinct lengths.

\end{abstract}

\section{Introduction}

The set of all positive integers is denoted by $\poi$ and the set of all positive integers no greater than $k$ is denoted by $\poi_k$. For a set $X$, we denote the set of all subsets of $X$ by $2^X$. Graphs in this paper have finite vertex sets, no loops, and no parallel edges. Let $G$ and $H$ be two graphs. $G$ is said to \textit{contain} $H$ if $G$ contains an induced subgraph isomorphic to $H$. $G$ is \textit{$H$-free} if $G$ does not contain $H$. For $X \subseteq V(G)$, we denote by $G[X]$ the subgraph of $G$ induced by $X$, and we refer to $X$ and $G[X]$ interchangeably. For standard graph theoretic terminology, the reader is referred to \cite{Diestel}.

For a graph $G=(V(G), E(G))$, a \textit{tree decomposition} $(T, \chi)$ of $G$ consists of a tree $T$ and a map $\chi: V(T) \rightarrow 2^{V(G)}$ with the following properties:
\begin{enumerate}[\rm (i)]
	\item For every $v \in V(G)$, there exists $t \in V(T)$ such that $v \in \chi(t)$.
	\item For every $uv\in E(G)$, there exists $t \in V(T)$ such that $u,v \in \chi(t)$.
	\item For every $v \in V(G)$, the subgraph of $T$ induced by $\{t \in V(T) \mid v \in \chi(t)\}$ is connected.
\end{enumerate}

For every $t \in V(T)$, we refer to $t$ as a \textit{node of} and to $\chi(t)$ as a \textit{bag of} $(T, \chi)$. The \textit{width} of a tree decomposition $(T, \chi)$ is $\max_{t \in V(T)}|\chi(t)|-1$. The \textit{treewidth} of $G$, denoted by $\operatorname{tw}(G)$, is the minimum width of a tree decomposition of $G$. Additionally, we define $\cl(G)$ to be the number of distinct induced cycle lengths in $G$, i.e., \[\cl(G) = \left|\{l\in \poi \mid G \text{ has an induced cycle of length } l\} \right|.\]

Berger, Seymour, and Spirkl \cite{NonShortest} provided a polynomial-time algorithm for deciding whether there is a non-shortest $u\dd v$ path in $G$ for $u,v\in V(G)$. This algorithm can be straightforwardly applied to test whether all induced cycles in $G$ have the same length. However, there is no simple way to apply this algorithm to test whether a graph $G$ also contains induced cycles of at least three distinct lengths. In this paper, we provide a polynomial-time algorithm for doing exactly that.

\begin{restatable}{theorem}{algoThree}\label{thm:algo_3}
	Let $G$ be a graph. There exists an algorithm with running time $O(|V(G)|^{22})$ that decides whether $\cl(G)\ge 3$ and outputs a list of three induced cycles of distinct lengths if they exist.
\end{restatable}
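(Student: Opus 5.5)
We plan to derive the algorithm from the structural result stated in the abstract: for fixed $c,t$, graphs that are $K_{t+1}$-free, $K_{t,t}$-free and satisfy $\cl(G)\le c$ have treewidth bounded by a function $f(c,t)$. The algorithm splits into a ``dense'' branch, where large cliques or bicliques immediately give three induced cycle lengths, and a ``sparse'' branch, where bounded treewidth allows a direct check.

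\emph{Step 1 (cliques).} A clique on $4$ vertices contains only triangles as induced cycles, so cliques alone do not help. Instead, we use $K_{t+1}$ and $K_{t,t}$ together with their neighbourhoods. We choose $t$ as a small constant (for instance $t=3$) and test whether $G$ contains $K_4$ or $K_{3,3}$; this takes time $O(n^6)$. The remaining question is what to do when such a configuration is present. Here we do not claim that its presence forces $\cl(G)\ge 3$. We only use it to localise the search: any three induced cycles witnessing $\cl(G)\ge3$ must either have small length or avoid the configuration in a controlled way.

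\emph{Step 2 (guessing short cycles).} Concretely, we enumerate candidate short induced cycles of constant length. Cycles of length $3,4,5,6$ can be listed in time $O(n^6)$. For the long cycles we need a different tool. We use the Berger--Seymour--Spirkl non-shortest-path algorithm: $G$ has an induced cycle longer than $\ell$ through prescribed vertices exactly when suitable non-shortest induced paths exist. Our plan is to guess a constant number of vertices on a cycle (up to about $4$--$6$ vertices, costing $n^{O(1)}$) and delete their neighbourhoods appropriately, which reduces the question to the existence of a long induced path between two vertices. This reduction produces the polynomial degree $22$.

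\emph{Step 3 (bounded-treewidth case).} If Steps 1--2 do not settle the question, we pass to a subgraph that is $K_4$- and $K_{3,3}$-free, or we argue that if $\cl(G)\le 2$ the whole graph has bounded treewidth. Concretely, we run an approximate treewidth algorithm with bound $f(2,t)$. If it reports treewidth larger than $f(2,t)$ on a $K_{t+1}$- and $K_{t,t}$-free graph, then by the main theorem $\cl(G)\ge 3$. Otherwise we use Courcelle-type dynamic programming on the decomposition to compute the set of induced cycle lengths, tracking lengths up to $n$; this runs in polynomial time for bounded width, and we can also extract witness cycles.

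The main obstacle is the dense case, where $G$ contains $K_{t+1}$ or $K_{t,t}$ but is not obviously forced to have three cycle lengths; for example, complete bipartite graphs have only $C_4$. There I would first try to decompose along the clique or biclique, identifying it as a clique cutset or a ``star-like'' separation. The aim is to show that if $\cl(G)\le2$ then such structures separate the graph into pieces that are recursively handled, with the guessing costing the high polynomial exponent.
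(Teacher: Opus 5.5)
\textbf{There is a genuine gap.} The dense branch, which you yourself flag as the main obstacle, is never resolved. Decomposing along a clique or biclique and recursing is only a hope, not an argument. Your Step 3 move ``pass to a subgraph that is $K_4$- and $K_{3,3}$-free'' is also unsound as a decision procedure, since a negative answer for an induced subgraph says nothing about $G$.

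\textbf{The missing idea} is to take $t=2$, so that the dense case is exactly ``$G$ contains a triangle or an induced $C_4$''. The shortest induced cycle length is then known (3, resp.\ 4), and $\cl(G)\ge 3$ if and only if $G$ has induced cycles of two distinct lengths exceeding it.
\begin{itemize}
\item In the triangle case, for each edge $uv$ let $G_{uv}$ be obtained from $G$ by deleting the edge $uv$ and the set $N(u)\cap N(v)$. Induced $u\dd v$ paths in $G_{uv}$ correspond exactly to induced cycles of length at least $4$ through $uv$.
\item Two distinct such lengths exist if and only if either some $G_{uv}$ has a non-shortest induced $u\dd v$ path (Berger--Seymour--Spirkl), or the shortest $u\dd v$ path lengths (found by BFS) differ for two edges. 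Each cycle gives a shortest path in its own $G_{uv}$ unless that $G_{uv}$ already yields two lengths.
\item In the triangle-free case containing $C_4$, do the same for every induced path $v_1\dd v_2\dd v_3\dd v_4$, deleting all neighbours of $v_2,v_3$ other than $v_1,v_4$. The resulting cycles have length at least $5$.
\end{itemize}
This is where the exponent comes from: $O(n^4)$ choices of the path times $O(n^{18})$ per call gives $O(n^{22})$. Your Step 2 gestures at non-shortest paths, but ``an induced cycle longer than $\ell$ through prescribed vertices'' is not what that algorithm detects. The point is that one only needs to tell shortest from non-shortest.

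\textbf{A smaller gap is in the sparse branch.} When the treewidth test fails you conclude $\cl(G)\ge 3$ but output no witnesses, whereas the theorem requires three cycles. The paper fixes this as follows. It deletes vertices one at a time until the treewidth drops below the threshold $k$, then restores the last deleted vertex. The resulting induced subgraph $G''$ has treewidth exactly $k$. By the structural theorem it therefore has three induced cycle lengths, and the linear-time bounded-treewidth dynamic program run on $G''$ finds them.
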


As a generalized version and a core subroutine of this algorithm, we also show that we can decide if $\cl(G)\ge c$ for every $c\in \poi$ in linear time given that $G$ has bounded treewidth.

\begin{restatable}{theorem}{algoBounded}\label{thm:algo_bounded}
	Let $c,k\in \poi$ be constants and let $G$ be a graph with $\tw(G)\le k$. There exists an algorithm with running time $O(|V(G)|)$ that decides whether $\cl(G)\ge c$ and outputs a list of $c$ induced cycles of distinct lengths if they exist.
\end{restatable}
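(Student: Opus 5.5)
We prove Theorem~\ref{thm:algo_bounded} by reducing to Courcelle's theorem, in its optimization/witness-producing form (e.g.\ the Arnborg--Lagergren--Seese extension). Fix $c,k$. The main difficulty is that induced cycle \emph{lengths} are not directly MSO-expressible, since lengths may be unbounded. We therefore bound the relevant lengths using treewidth: we show that either $G$ has many distinct induced cycle lengths among cycles of bounded length, or the problem can be handled by a counting argument over a bounded number of ``length classes''.

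Concretely, we avoid bounding cycle lengths altogether. We run a direct dynamic program over a nice tree decomposition of width at most $k$, computed in linear time by Bodlaender's algorithm. For each node $t$ the DP state records a partial induced cycle restricted to $G_t$ (the graph induced by the vertices introduced below $t$). Such a partial cycle is a disjoint union of induced paths whose endpoints lie in $\chi(t)$. The state stores:
\begin{itemize}
\item which vertices of $\chi(t)$ are used;
\item their degrees within the partial structure (0, 1 or 2);
\item the pairing of path endpoints;
\item the \emph{set of achievable total lengths}.
\end{itemize}
Since we only need to know whether at least $c$ distinct lengths occur, the set of lengths is truncated: we keep at most $c$ distinct lengths per boundary configuration, together with a witness for each. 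Inducedness is enforced locally: at each introduce or forget step we check that no chord exists between used vertices. Every edge lies in some bag, so each potential chord is seen while both endpoints are present.

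The number of boundary configurations is bounded by a function of $k$. The step we expect to be the main obstacle is the truncation at join nodes. When two children are combined, the lengths add, and keeping only $c$ lengths per child might lose combinations yielding new totals. This is resolved as follows. For a fixed configuration pair, the achievable totals form the sumset $A+B$. If $|A|\ge c$ or $|B|\ge c$, then $|A+B|\ge c$, since adding a fixed element of the other set to a set of size $c$ gives $c$ distinct values. Otherwise both sets were stored exactly, because they were never truncated. Hence keeping any $c$ elements of a set that has $\ge c$ elements is safe: any $c$ elements still produce $\ge c$ totals after summation with a nonempty set.

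Global union across configurations at the root can also just be truncated to $c$. A subtlety is that truncation commutes with the final union but must also preserve the correct count when several configurations contribute overlapping lengths. Since we only ask whether the union has size $\ge c$, it suffices that each configuration retain $\min(c,\text{true size})$ elements: if the union has $\ge c$ elements, a greedy check over the retained sets recovers $c$ distinct ones. This needs care, and we verify it by keeping up to $c$ per configuration and noting the union of truncated sets has size $\ge \min(c,|\text{true union}|)$.

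Each node then does work bounded by $f(k,c)$, giving $O(|V(G)|)$ time. Witness cycles are reconstructed by standard backtracking pointers stored with each retained length.
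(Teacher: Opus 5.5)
Your proposal is correct and follows essentially the same route as the paper. You compute a width-$k$ decomposition in linear time via Bodlaender, run a dynamic program over boundary configurations of partial induced cycles that stores at most $c$ lengths (with witnesses) per configuration and adds lengths at join nodes, and then backtrack. The only difference in the DP is the encoding: you use used vertices, degrees and endpoint pairings on a nice decomposition, whereas the paper uses ordered sets $Q$, pair sets $R$ and anticomplete sets $X$ on a binary decomposition.

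Your truncation argument is more careful than the paper's remark that path systems with the same triple are interchangeable. The invariant is that each stored set $T$ satisfies $T\subseteq S$ and $|T|\ge\min(c,|S|)$ for the true set $S$, and it is preserved under shifted unions and under sumsets with a nonempty partner. Your opening paragraph about Courcelle's theorem is never used and should be dropped.
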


The majority of this paper is devoted to the proof of the following structural result that identifies a necessary condition on the structure of graphs with large treewidth.

\begin{restatable}{theorem}{mainResult}\label{thm:main_result}
For all $c,t \in \poi$, there is a constant $f_{\blackref{thm:main_result}} = f_{\blackref{thm:main_result}}(c,t) \in \poi$ such that for every graph $G$ with $\tw(G) \ge f_{\blackref{thm:main_result}}$ one of the following holds.
\begin{enumerate}[\rm (a)]
	\item $G$ contains $K_{t+1}$ or $K_{t,t}$.
	\item $\cl(G) \ge c$.
\end{enumerate}
\end{restatable}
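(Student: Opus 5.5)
We reduce to known structural results on graphs of large treewidth with bounded clique and biclique. By the theorem of Korhonen (and, more directly, the "layered wheels"-free grid-induced-minor results), any $K_{t+1}$- and $K_{t,t}$-free graph of huge treewidth contains either a large subdivided wall or its line graph as an induced subgraph, or else contains some large "basic" obstruction. Since the general induced-grid theorem is false (layered wheels), I would not rely on it. Instead I would use the result of Lozin–Razgon / Abrishami et al. that a $K_{t+1}$, $K_{t,t}$-free graph of large treewidth contains a large induced subgraph of controlled type, or rather the more robust route: large treewidth with bounded $\omega$ and no $K_{t,t}$ forces many vertex-disjoint long induced paths between well-linked sets. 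The plan is to argue by induction on $c$. Graphs with few induced cycle lengths form a hereditary class, and it suffices to show that in a $K_{t+1}$, $K_{t,t}$-free graph of large treewidth one can find induced cycles of $c$ distinct lengths.

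First step: by a theorem of Kühn–Osthus, $K_{t,t}$-free graphs of large average degree contain induced subdivisions of any fixed graph. Large treewidth alone does not give large average degree, so we use degeneracy. If $G$ (or a subgraph of it) has large minimum degree, Kühn–Osthus gives an induced subdivision of a large clique, and in particular many induced cycles. We control lengths by choosing an induced subdivision of a graph $H$ that has cycles of lengths in an arithmetic progression. Simpler still is to use the strengthened Kühn–Osthus/Dellamonica–Rödl result that sparse $K_{t,t}$-free graphs of large average degree contain induced cycles of many lengths, namely the Sudakov–Liu–Montgomery theorem on cycle lengths in $K_{t,t}$-free graphs: such graphs contain induced cycles of all even lengths in an interval. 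This handles the case where some induced subgraph has large average degree.

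Second step, the degenerate case: $G$ is $d$-degenerate with $d$ bounded, $K_{t+1}$-free, of huge treewidth, and has few induced cycle lengths. Here I would invoke the result (Korhonen; Abrishami–Chudnovsky–Hajebi–Spirkl) that bounded-degree, or more generally bounded-degeneracy, graphs of large treewidth contain a large subdivided wall or its line graph as an induced subgraph. Korhonen's theorem covers bounded degree, and degeneracy is handled by the $K_{t,t}$-free grid induced minor results. A $k\times k$ subdivided wall contains cycles that enclose $1,2,\dots$ bricks in a row. Their lengths are sums of subdivided brick-boundary lengths. Taking the cycles $C_1,\dots,C_m$ surrounding the first $j$ bricks of a row, the lengths strictly increase in $j$, since each added brick adds at least two edges net. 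So we get $m$ distinct lengths, and the same holds in the line graph. With $k$ large this gives $c$ lengths.

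The main obstacle is the degenerate case with unbounded maximum degree, where induced wall theorems fail: layered wheels are $K_4$-free, of large treewidth and bounded degeneracy. For that case I would try to find $c$ distinct lengths directly from a large bramble. Using bounded $\omega$ and $K_{t,t}$-freeness, extract a vertex $v$ with many pairwise anticomplete induced paths to a connected set far away. Pairs of such paths close into induced cycles through $v$. By Ramsey on path lengths, either the lengths vary and give $c$ distinct cycle lengths, or all are equal. In the equal case, iterate inside the connected set with an extra "rung" to shift lengths by a bounded nonzero amount. Making this rung argument rigorous is the hardest part, and it is what I expect the paper's bulk handles.
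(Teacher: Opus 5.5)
There is a genuine gap. The case you flag as ``the main obstacle'' is where the whole difficulty of the theorem lies, and what you offer for it is a heuristic, not a proof. Your second step also asserts that bounded degeneracy forces an induced subdivided wall or its line graph. That is false; as your next paragraph concedes, layered wheels are counterexamples. So the K\"uhn--Osthus split into a dense and a degenerate case is correct on the dense side but buys nothing: the hard instances are already degenerate, and the paper never needs such a split.

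The structure you hope to ``extract'' --- a vertex with many pairwise anticomplete induced paths attaching to a connected set --- is essentially a large dome with root $x$, and obtaining it is the content of the paper. The paper's route is as follows. Walls, line graphs of walls, proper subdivisions of $K_r$ and large induced $K_{r,r}$-minors each force $c$ cycle lengths (Lemmas~\ref{lem:wall_has_cycles} and~\ref{lem:Ktt_minor_has_cycles}, the latter via ample constellations). With these excluded, the strong-block theorem (Theorem~\ref{thm:clean_graphs_have_strong_blocks}) and the anticomplete-block lemma (Lemma~\ref{lem:anti_block_has_obstructions}) yield an $l$-dense $q$-banana. Theorem~\ref{thm:banana_has_cycles} then turns that banana into a large dome, using rigid sub-bananas, the connectifier theorem, clean tailed kites, and an inductive construction of an $s$-fleet. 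Nothing in your proposal supplies or replaces any of these steps.

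Your plan for turning such paths into cycles of distinct lengths also targets the wrong difficulty. Distinct leg lengths alone do not give distinct cycle lengths, because the closing path through the connected set changes with the pair. Equal leg lengths, on the other hand, are harmless once the attachments are controlled, so no ``rung'' is needed. What is needed is alignment: the leaves attach to the base path $P$ in pairwise disjoint intervals in a fixed order. This is Lemma~\ref{lem:get_an_alignment}, and it requires excluding large induced $K_{t,t}$-minors. You never address that case, although the hypothesis forbids $K_{t,t}$ only as an induced subgraph. Given alignment, Erd\H{o}s--Szekeres gives legs whose lengths are monotone in the order along $P$. Orient $P$ so these lengths are non-decreasing and close every leg with the first one. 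The resulting segments of $P$ have strictly increasing length, so the cycles do too (Lemma~\ref{lem:alignment_has_cycles}).

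Your wall case has the same defect. The cycle around the first $j$ bricks of a row need not grow with $j$: the shared vertical path that becomes interior may be much longer than the edges added. Moreover, unsubdivided interior vertical edges are chords, so these cycles need not even be induced. The fix, as in Lemma~\ref{lem:wall_has_cycles}, is to pass to a proper subdivision of a ladder $W_{1\times m}$ and apply Erd\H{o}s--Szekeres to the rung lengths.
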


\subsection{Key structures}

We start by defining some of the key structures used throughout this paper. For each structure $S$ in $G$ defined here, we will use $S$ and the subgraph induced by $G$ on the vertex set of $S$ interchangeably.

For disjoint subsets $X, Y \subseteq V(G)$, we say that $X$ and $Y$ are \textit{anticomplete} (or that
$X$ {\em is anticomplete to} $Y$) if there is no edge in $G$ with an end in $X$ and an end in $Y$, and that {\em $X$ has a neighbor in $Y$}
if $X$ is not anticomplete to $Y$.
For $x \in V(G)$, we say that $x$ is \textit{anticomplete to $Y$} if $\{x\}$ and $Y$ are anticomplete. For a set $\mathcal{X}$ of subsets of $V(G)$, we write $V(\mathcal{X})=\bigcup_{X \in \mathcal{X}} X$.

An {\em $(s,d)$-dome in $G$} is a pair $(S,P)$ such that
\begin{itemize}

	\item $S$ is a subdivided star in $G$ with leaf set $L$ of size $s$;
	\item $P$ is a path in $G$ disjoint from $S$;
	\item $V(S)\setminus L$ and $V(P)$ are anticomplete in $G$;
	\item every vertex $v\in L$ has a neighbor in $P$; and
	\item every vertex $u\in P$ is adjacent to at most $d$ vertices in $L$ (see Figure \ref{fig:dome}).

\end{itemize}
The {\em root} of the dome $(S,P)$ is the unique vertex of $S$ that has degree at least three in $S$.
A dome is {\em aligned} if there exists an ordering $l_1,\dots,l_s$ of the vertices of $L$ and $s$ pairwise disjoint subpaths $u_i \dd P \dd v_i$ of $P$ for $i\in \poi_s$ (possibly of length zero) such that

\begin{itemize}

	\item there is an end $u$ of $P$ such that $P$ traverses $u,u_1,v_1,\dots,u_s,v_s$ in this order; and
	\item for every $i\in \poi_s$, all neighbors of $l_i$ in $P$ are contained in $u_i \dd P \dd v_i$.

\end{itemize}
Note that for every $d\in \poi$, an aligned $(s,d)$-dome is also an aligned $(s,1)$-dome.

\begin{figure}
	\centering
	\includegraphics[width=0.35\linewidth]{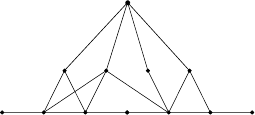}
	\caption{A $(4,3)$-dome that is not aligned.}
	\label{fig:dome}
\end{figure}

Let $l,q\in \poi$. We define a \textit{$q$-banana in $G$ with ends $x,y$}, denoted $B_{x,y}$, to be a pair $(\{x,y\},\mathcal P)$ where $\mathcal P = \{P_1,\dots,P_q\}$ is a set of $q$ pairwise internally disjoint $x\dd y$ paths. We define a \textit{$q$-theta in $G$ with ends $x,y$}, denoted $\Theta_{x,y}$, to be a $q$-banana in $G$ such that for all $i \neq j \in \poi_q$, the paths $P_i\setminus \{x,y\}$ and $P_j\setminus \{x,y\}$ are anticomplete to each other (see Figure \ref{fig:banana}). We call a $q$-banana with ends $x,y$ \textit{rigid} if for all $i,j \in \poi_q$, the paths $P_i\setminus \{x,y\}$ and $P_j\setminus \{x,y\}$ are not anticomplete to each other (see Figure \ref{fig:banana}). We say that a banana $(\{x,y\},\mathcal P)$ is \textit{$l$-dense} if $V(\mathcal P)$ does not contain an $l$-theta with ends $x,y$.

\begin{figure}
	\centering
	\includegraphics[width=0.6\linewidth]{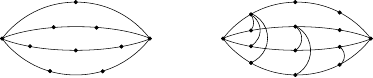}
	\caption{A 4-theta (left) and a rigid 4-banana (right).}
	\label{fig:banana}
\end{figure}

For a triple of distinct vertices $x,y,z$, we define an \textit{$(x,z)$-tailed $q$-theta in $G$}, denoted $\Theta_{x,y,z}$, to be a pair $(\Theta_{x,y}, P)$ such that
\begin{itemize}

	\item $\Theta_{x,y}$ is a $q$-theta in $G$;
	\item $P$ is a $y \dd z$ path in $G$, called a \textit{tail};
	\item $P\setminus \{y\}$ and $V(\Theta_{x,y})\setminus\{y\}$ are anticomplete.

\end{itemize}

For $k,q\in\poi$, we define an \textit{$(x,z)$-tailed $(k,q)$-kite $\mathcal K$ in $G$} to be a set of $k$ $(x,z)$-tailed $q$-thetas $\allowbreak \Theta_{x,y_1,z},\dots, \Theta_{x,y_k,z}$ in $G$ such that for all distinct $i,j \in \poi_k$, $V(\Theta_{x,y_i,z}) \cap V(\Theta_{x,y_j,z}) = \{x,z\}$ (see Figure \ref{fig:kite}). For $i \in \poi_k$, let $\Theta_{x,y_i,z} = (\Theta_{x,y_i},P_i)$. We call a $(k,q)$-kite $\mathcal K$ \textit{clean} if for every $i \in \poi_k$, every vertex of $V(\Theta_{x,y_i})\setminus \{x,y_i\}$ has degree two in $\mathcal K$, and the set $\{y_1,\dots,y_k\}$ is a stable set (see Figure \ref{fig:kite}).

\begin{figure}
	\centering
	\includegraphics[width=0.7\linewidth]{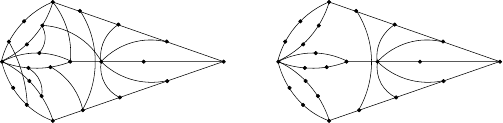}
	\caption{A $(3,2)$-kite (left) and a clean $(3,2)$-kite (right).}
	\label{fig:kite}
\end{figure}

\subsection{Proof outline and organization}

We start by outlining the proof of Theorem \ref{thm:main_result}. In Section \ref{sec:obstructions}, we show that if a graph $G$ contains one of the basic obstructions (see Section \ref{sec:obstructions} for a precise statement) or a large dome, then $\cl(G)\ge c$. In Section \ref{sec:reducing_to_a_banana}, we apply this result to reduce Theorem \ref{thm:main_result} to Theorem \ref{thm:banana_has_cycles}, which states that a graph containing a large dense banana contains a large dome. We then prove Theorem \ref{thm:banana_has_cycles} in Sections \ref{sec:analyzing_bananas}, \ref{sec:banana_to_kite} and \ref{sec:general_step}.

In Section \ref{sec:analyzing_bananas}, we analyze the structure of rigid bananas. We show how to obtain either a theta $\Theta_{x,y}$ where each vertex in $N(x)$ has degree two in $G$, or a large dome. In Section \ref{sec:banana_to_kite}, we apply this result to obtain many tailed thetas, which together form a kite. Then, we show how to either make this kite clean or obtain a large dome. In Section \ref{sec:general_step}, we recursively obtain a sequence of kites, which we use to obtain a large dome $(S,P)$. The last kite in this sequence provides the path $P$, and all the previous kites provide the paths of the subdivided star $S$. This completes the proof of Theorem \ref{thm:banana_has_cycles}, and thus, also Theorem \ref{thm:main_result}.

Then, in Section \ref{sec:algo} we describe our ``key subroutine'', Theorem \ref{thm:algo_bounded}. The input of this algorithm is a graph $G$ with bounded treewidth. The algorithm finds a tree decomposition $(T, \chi)$ of $G$ and, using dynamic programming over $T$, either finds $c$ induced cycles of distinct lengths in $G$, or determines that $\cl(G) < c$.

Finally, in Section \ref{sec:three_cycles}, we prove Theorem \ref{thm:algo_3}. Suppose that $G$ contains $K_3$ or $K_{2,2}$. Then, we can build on the algorithm that finds a non-shortest $u\dd v$ path from \cite{NonShortest} to decide whether $\cl(G) \ge 3$. Thus, we may assume that $G$ is $\{K_3,K_{2,2}\}$-free. In this case, we apply Theorem \ref{thm:main_result} to reduce the problem to the case where $G$ has bounded treewidth and then apply Theorem \ref{thm:algo_bounded}. This proves Theorem \ref{thm:algo_3}.

\section{Definitions}\label{sec:definitions}

Let $G=(V, E)$ be a graph. For $X\subseteq V$, we define \textit{the neighborhood} of $X$, denoted $N(X)$, to be the set of all vertices in $V \setminus X$ with at least one neighbor in $X$. If $X=\{x\}$, then we write $N(x)$ for $N(\{x\})$.

An \textit{induced minor} of $G$ is a graph obtained from an induced subgraph of $G$ by repeatedly contracting edges, deleting all arising loops and parallel edges. For a graph $H$, we say that $G$ \textit{contains} or \textit{has} an induced $H$-minor if there is an induced minor of $G$ isomorphic to $H$. The \textit{line graph} of $G$, denoted $L(G)$, is the graph with vertex set $E(G)$ such that $e, f \in E(G)$ are adjacent in $L(G)$ if and only if $e$ and $f$ share an end in $G$. We say that a graph $G$ is $t$-\textit{tidy} if $G$ is $K_{t+1}$-free and has no induced $K_{t,t}$-minor.

Let $k \in \poi$ and let $P$ be a $k$-vertex graph that is a path. Then we write $P=p_1\dd \cdots \dd p_k$ to mean that $V(P)=\left\{p_1, \ldots, p_k\right\}$ and $E(P)=\left\{p_i p_{i+1}: i \in \poi_{k-1}\right\}$. We call the vertices $p_1$ and $p_k$ the \textit{ends} of $P$ and refer to $P \backslash\left\{p_1, p_k\right\}$ as the \textit{interior} of $P$, denoted $P^*$. For a set of paths $\mathcal P$, we define the \textit{interior of} $\mathcal P$, denoted $\mathcal P^*$, to be the set of interiors of paths in $\mathcal P$, i.e., $\mathcal P^* = \{ P^* \mid P \in \mathcal P\}$. For a banana $B=(\{x,z\},\mathcal P)$, we use $B^*$ and $\mathcal P^*$ interchangeably.

For vertices $u, v \in P$, we denote by $u \dd P \dd v$ the subpath of $P$ from $u$ to $v$. The \textit{length} of a path is the number of edges in it. It follows that a path $P$ has distinct ends if and only if $P$ has non-zero length, and $P$ has non-empty interior if and only if $P$ has length at least two. Given a graph $G$, a \textit{path in} $G$ is an induced subgraph of $G$ that is a path.

We define a \textit{directed graph} to be a pair $(V,D)$ where $D\subseteq V\times V$ is the set of directed edges. As before, we do not allow loops or parallel edges. A \textit{tournament} is a directed graph $G=(V,D)$ such that for each pair $u,v\in V$ exactly one of $(u,v),(v,u)$ is in $D$. A tournament is called \textit{transitive} if $(u,v),(v,w)\in D$ implies that $(u,w)\in D$. The \textit{underlying graph of} a directed graph $G$ is the graph $G^-$ with the same vertex set and an edge $uv$ if and only if $(u,v)\in D$ or $(v,u)\in D$. A well-known fact about tournaments is the following theorem.

\begin{theorem} [Erdös, Moser \cite{tournament}]
\label{thm:tournament}
Each tournament on $2^{n-1}$ vertices contains a transitive tournament on $n$ vertices. 
\end{theorem}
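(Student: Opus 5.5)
We prove the Erdős–Moser bound by induction on $n$, using the usual greedy choice of a vertex of large out-degree or in-degree. For $n=1$ there is nothing to show, since a single vertex is a transitive tournament on one vertex. So let $n\ge 2$, and assume every tournament on $2^{n-2}$ vertices contains a transitive subtournament on $n-1$ vertices. Let $G=(V,D)$ be a tournament with $|V|=2^{n-1}$, and fix any vertex $v\in V$.

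Write $N^+(v)=\{u \mid (v,u)\in D\}$ and $N^-(v)=\{u\mid (u,v)\in D\}$. Because $G$ is a tournament, these two sets partition $V\setminus\{v\}$. That set has $2^{n-1}-1$ elements, so by pigeonhole one of them has at least $\lceil (2^{n-1}-1)/2\rceil = 2^{n-2}$ vertices. Call that set $X$, and pass to an arbitrary subset of $X$ of size exactly $2^{n-2}$.

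The subtournament induced on this subset is again a tournament. By induction it contains a transitive subtournament on a vertex set $Y$ with $|Y|=n-1$. We claim $Y\cup\{v\}$ induces a transitive tournament on $n$ vertices, and check transitivity directly.

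Suppose $X=N^+(v)$, so $v$ beats every vertex of $Y$. Take $(a,b),(b,c)\in D$ with $a,b,c\in Y\cup\{v\}$. If none of $a,b,c$ is $v$, then $(a,c)\in D$ by transitivity of $Y$. The vertex $v$ cannot be $b$ or $c$, since no vertex of $Y$ has an arc into $v$. If $a=v$, then $c\in Y$ and $(v,c)\in D$ as required. The case $X=N^-(v)$ is symmetric, with $v$ a sink rather than a source. This completes the induction.

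There is no real obstacle here. The only points needing care are the arithmetic $\lceil(2^{n-1}-1)/2\rceil=2^{n-2}$, which holds for $n\ge 2$, and the observation that a source or sink adjoined to a transitive tournament keeps it transitive.
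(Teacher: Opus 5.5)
Your proof is correct and complete. It is the standard induction: fix a vertex $v$, pass to whichever of $N^+(v)$, $N^-(v)$ has at least $\lceil (2^{n-1}-1)/2\rceil = 2^{n-2}$ vertices, and adjoin $v$ as a source or sink to the transitive subtournament found there. Your transitivity check is sound, including its implicit use of the facts that a tournament has no loops and no $2$-cycles.

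The paper gives no proof of this statement to compare against. It quotes it as the classical Erdös--Moser result and uses it only as a black box in Lemma~\ref{lem:degree_two} and Theorem~\ref{thm:banana_to_clean_kite}, so your argument supplies the omitted textbook proof.
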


We will also use the following classical result.

\begin{theorem} [Ramsey \cite{Ramsey}]
\label{thm:ramsey}
For all $s,t \in \poi$, every graph on at least $s^t$ vertices has either a stable set of cardinality $s$ or a clique of cardinality $t+ 1$.
\end{theorem}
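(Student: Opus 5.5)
The plan is the standard induction on $t$. A stronger form is convenient: every graph on at least $R(s,t+1)$ vertices has a stable set of size $s$ or a clique of size $t+1$, where $R(s,t+1)\le \binom{s+t-1}{t}$. It then remains to check that $\binom{s+t-1}{t}\le s^t$, at least for $s\ge 2$. The cases $s=1$ or $t=1$ are trivial: a single vertex is a stable set of size $1$, and for $t=1$ any two vertices give either a stable set of size $2$ or an edge, i.e.\ a $K_2$. If $s=1$ and $s^t=1$ holds, one vertex suffices; if $t=1$ and we have $s$ vertices, then either all are pairwise nonadjacent or there is an edge.

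For the main step I would argue directly by induction on $s+t$, proving that $N(s,t)=s^t$ vertices suffice. Pick a vertex $v$ of a graph $G$ with $|V(G)|\ge s^t$, and split $V(G)\setminus\{v\}$ into $N(v)$ and the non-neighbours $M(v)$. Since $|N(v)|+|M(v)|\ge s^t-1$, either $|N(v)|\ge s^{t-1}$ or $|M(v)|\ge s^t-s^{t-1}=(s-1)s^{t-1}$.

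In the first case, induction on $t$ applied to $G[N(v)]$ gives either a stable set of size $s$ (done) or a clique of size $t$, which together with $v$ is a clique of size $t+1$. In the second case, I need $(s-1)s^{t-1}\ge (s-1)^t$ in order to apply induction on $s$ to $G[M(v)]$. This yields either a clique of size $t+1$ or a stable set of size $s-1$, which together with $v$ is stable of size $s$. The inequality holds since $s^{t-1}\ge (s-1)^{t-1}$.

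The only point needing care is this numeric check, namely that the two thresholds $s^{t-1}$ and $(s-1)s^{t-1}$ sum to at most $s^t-1$ after the pigeonhole split. Since their sum is exactly $s^t$ while the split only has $s^t-1$ vertices, I would use the cruder threshold $(s-1)^t$ for $M(v)$. Then $s^{t-1}-1+(s-1)^t\le s^t-1$, which is equivalent to $(s-1)^t\le (s-1)s^{t-1}$, and this is true. So pigeonhole gives $|N(v)|\ge s^{t-1}$ or $|M(v)|\ge (s-1)^t$, and both branches close by induction. This completes the argument.
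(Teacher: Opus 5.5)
The paper does not prove this statement; it quotes it as a classical result, so there is no proof of its own to compare with. Your argument is the standard Erd\H{o}s--Szekeres neighbourhood-splitting induction on $s+t$, and it is correct. The base cases $s=1$ and $t=1$ are right. For $s,t\ge 2$, splitting $V(G)\setminus\{v\}$ into neighbours and non-neighbours, together with $(s-1)^t\le (s-1)s^{t-1}$, closes both branches.

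One small correction to your last paragraph: the worry there is unfounded. For integers, $a+b\ge A+B-1$ already forces $a\ge A$ or $b\ge B$. So with $|N(v)|+|M(v)|\ge s^t-1$, the original thresholds $s^{t-1}$ and $(s-1)s^{t-1}$, which sum to exactly $s^t$, work as they stand. Then $|M(v)|\ge (s-1)s^{t-1}\ge (s-1)^t$ suffices to apply induction. Switching to the cruder threshold is harmless but unnecessary.

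Likewise, the binomial route in your first paragraph is never used and can be dropped. If you want to keep it as an alternative, it goes through because $\binom{s+t-1}{t}=\prod_{i=1}^{t}\bigl(1+\frac{s-1}{i}\bigr)\le s^t$.
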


\section{Some good cases}
\label{sec:obstructions}

In this section, we establish some sufficient conditions for $\cl(G)$ to be large.

\subsection{Removing basic obstructions}

\begin{figure}
	\centering
	\includegraphics[width=0.9\linewidth]{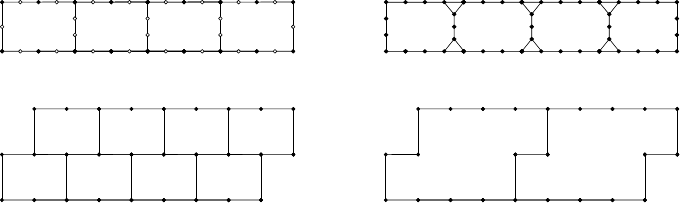}
	\caption{A proper subdivision of $W_{1\times 4}$ (top left), its line graph (top right), the graph $W_{2\times 4}$ (bottom left), and a proper subdivision of $W_{1\times 2}$ (bottom right). Note that $W_{2\times 4}$ contains a proper subdivision of $W_{1\times 2}$. }
	\label{fig:walls}
\end{figure}

For every $r \in \poi$, we denote by $W_{r\times r}$ the
$r$-by-$r$ hexagonal grid, also known as the $r$-by-$r$ \textit{wall} (see Figure \ref{fig:walls}). As the first step toward the proof of Theorem \ref{thm:main_result}, we prove that certain graphs with large treewidth contain induced cycles of many different lengths. In this subsection, we prove Lemmas \ref{lem:wall_has_cycles} and \ref{lem:Ktt_minor_has_cycles} that show that for every $c \in \poi$ there exists $r \in \poi$, such that if a graph $G$ contains
\begin{itemize}
	\item a subdivision of $W_{r \times r}$,
	\item the line graph of a subdivision of $W_{r \times r}$,
	\item a proper subdivision of $K_r$, or
	\item an induced $K_{r,r}$-minor,
\end{itemize}
 then $\cl(G)\ge c$. We start by introducing some definitions and Theorem \ref{thm:Ktt_minor_has_obstructions} from \cite{tw16}.

A \textit{constellation} is a graph $\mathfrak{c}$ in which there is a stable set $S_{\mathfrak{c}}$ such that every component of $\mathfrak{c} \backslash S_{\mathfrak{c}}$ is a path, and each vertex $s \in S_{\mathfrak{c}}$ has at least one neighbor in each component of $\mathfrak{c} \backslash S_{\mathfrak{c}}$. We say that $\mathfrak{c}$ is \textit{ample} if no two vertices in $S_{\mathfrak{c}}$ have a common neighbor. We denote by $\mathcal{L}_{\mathfrak{c}}$ the set of all components $\mathfrak{c} \backslash S_{\mathfrak{c}}$ (each of which is a path), and denote the constellation $\mathfrak{c}$ by the pair $(S_{\mathfrak{c}}, \mathcal{L}_{\mathfrak{c}})$. For $l,s \in \poi$, by an $(s, l)$-\textit{constellation} we mean a constellation $\mathfrak{c}$ with $\left|S_{\mathfrak{c}}\right|=s$ and $\left|\mathcal{L}_{\mathfrak{c}}\right|=l$ (see Figure \ref{fig:constellation}).

\begin{figure}
	\centering
	\includegraphics[width=0.35\linewidth]{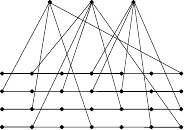}
	\caption{An ample $(3,4)$-constellation.}
	\label{fig:constellation}
\end{figure}

\begin{theorem} [Chudnovsky, Hajebi, Spirkl \cite{tw16}]
\label{thm:Ktt_minor_has_obstructions}
For all $l, r, s \in \poi$, there is a constant $f_{\blackref{thm:Ktt_minor_has_obstructions}}=f_{\blackref{thm:Ktt_minor_has_obstructions}}\left(l, r, s \right) \in \poi$ such that every graph $G$ with an induced $K_{f_{\blackref{thm:Ktt_minor_has_obstructions}}, f_{\blackref{thm:Ktt_minor_has_obstructions}}}$-minor contains one of the following:
\begin{enumerate}[\rm (a)]
	\item $K_{r,r}$, a subdivision of $W_{r \times r}$, or the line graph of a subdivision of $W_{r \times r}$;
	\item an ample $(s, l)$-constellation.
\end{enumerate}
\end{theorem}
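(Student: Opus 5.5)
The plan is to start from an induced $K_{N,N}$-minor with $N$ huge compared to $l,r,s$. We then shrink its branch sets until either one side consists of paths and the other of single vertices, which is an $(s,l)$-constellation, or one of the outcomes in (a) appears. Fix a model of the minor: pairwise disjoint connected sets $A_1,\dots,A_N$ and $B_1,\dots,B_N$ such that
\begin{itemize}
\item the sets $A_i$ are pairwise anticomplete,
\item the sets $B_j$ are pairwise anticomplete, and
\item each $A_i$ has a neighbor in each $B_j$.
\end{itemize}
Since the definition of an induced minor lets us first pass to an induced subgraph, we may assume $V(G)=\bigcup_i A_i\cup\bigcup_j B_j$. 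We also replace each $A_i$ by an inclusion-minimal connected subset that still has a neighbor in every $B_j$, and we do the same for each $B_j$.

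The first key step is a structural lemma for such minimal sets. Let $X$ be a connected set that is minimal subject to having a neighbor in each of $m$ anticomplete target sets. Then $X$ contains, for $m'$ of the targets (with $m'\to\infty$ as $m\to\infty$), one of the following:
\begin{itemize}
\item an induced path whose interior meets all of them in order;
\item a subdivided star whose leaves are the private attachments;
\item the line graph of a subdivided star with the same property.
\end{itemize}
This is the standard ``minimal connected subgraph with many attachments'' argument. We keep a spanning tree of $X$; minimality forces every leaf to be the unique attachment for some target. If the tree has a vertex of huge degree we get a star or line-graph-of-star type, and otherwise it has a long path of branch vertices, which we convert into an induced path. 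We apply this to every $A_i$ with the targets $B_1,\dots,B_N$. By pigeonhole and Ramsey (Theorem~\ref{thm:ramsey}), applied repeatedly while passing to subsequences of indices, we may assume all $A_i$ have the same type.

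Case 1: the $A_i$ are paths. For each $j$, we pick in $B_j$ a vertex or a small connected piece attached to many of the paths. If some vertex $b_j\in B_j$ has a neighbor in each of $l$ of the paths for $s$ indices $j$, we obtain a constellation. The anticomplete condition between different $B_j$ makes $\{b_j\}$ stable, and passing to subsequences keeps the paths pairwise anticomplete. For ampleness, we use Ramsey on the auxiliary graph recording which $b_j,b_{j'}$ have a common neighbor on the paths. A huge clique there yields many $b_j$ sharing common neighbors; repeating the argument then produces $K_{r,r}$, since $G$ must contain many vertices adjacent to many $b_j$. A large stable set gives an ample $(s,l)$-constellation. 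If no single vertex of $B_j$ sees many paths, we apply the structural lemma to $B_j$ as well, and we are in Case 2 with the roles swapped.

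Case 2: both sides, or one side, consist of subdivided stars or line graphs of stars whose leaves attach privately to the other side. Here we show that the grid-like interaction of many such stars with many anticomplete sets forces a subdivision of $W_{r\times r}$ or the line graph of one. The intended mechanism is as follows. The leaves of star $A_i$ go to the $B_j$, and the leaves of $B_j$ go to the $A_i$. The union then looks like a subdivided complete bipartite incidence structure, and from it we route an $r\times r$ wall by choosing, via Ramsey and the Erdős--Moser tournament lemma (Theorem~\ref{thm:tournament}), orderings of the attachment points along each path that are consistent across indices. Stars give walls, and line graphs of stars give line graphs of walls.

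I expect this last step to be the main obstacle. The difficulty is controlling the extra adjacencies between attachment vertices, so that the extracted wall is an induced subdivision, and not merely a subgraph. My first attempt would be to use the privacy of attachments coming from minimality, together with a further Ramsey cleaning, to make all unwanted edges vanish. Whenever cleaning fails, the failure itself produces a large complete bipartite induced subgraph, i.e.\ $K_{r,r}$.
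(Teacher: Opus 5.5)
The paper does not prove this statement; it quotes it from \cite{tw16}. Your sketch therefore has to stand on its own. The framework (minimal model, a connectifier-type lemma, Ramsey) is reasonable, but there is a genuine gap exactly where you suspect, and two earlier steps are wrong.

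\textbf{The main gap.} First, your case split misroutes the central case. When no vertex of $B_j$ sees many of the paths $A_i$, you apply the structural lemma to $B_j$ and say you are in Case 2. But the lemma can return a path again. You are then left with both sides paths and no vertex seeing many branch sets of the other side, and this case is never handled. In it, $A_i$ and $B_j$ may touch along many interleaved edges rather than one. Choosing one contact per pair and making the crossing orders consistent by pigeonhole is easy. The remaining contacts, however, lie on exactly the subpaths the wall must use and destroy inducedness, and you give no way to remove them.

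Second, the star--star picture is not right as described. You choose the star in $A_i$ (relative to the $B_j$) and the star in $B_j$ (relative to the $A_i$) independently. So the leaf of the former that sees $B_j$ may have all its neighbors in $B_j$ outside the latter, and the two stars can be anticomplete; the union need not be a subdivided $K_{n,n}$. You would have to shrink the $A_i$ first, then re-minimize the $B_j$ against the shrunken sets, pigeonholing to common index sets at each stage. Finally, your hope that every failed cleaning yields $K_{r,r}$ is unsupported, and the second example below shows it fails in a closely related situation.

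\textbf{Earlier errors.} Your structural lemma omits the caterpillar and line-graph-of-caterpillar outcomes of Theorem~\ref{thm:connectifier}, and without them it is false. Take $X$ to be a caterpillar with spine $s_1,\dots,s_m$ and pendant vertices $u_i$ adjacent to $s_i$, where the $i$-th target sees only $u_i$. Then:
\begin{itemize}
\item $X$ is minimal;
\item every induced path in $X$ contains at most two of the $u_i$;
\item every subdivided star in $X$ has at most three leaves;
\item $X$ is triangle-free, so it contains no line graph of a subdivided star with three or more legs.
\end{itemize}
The ``long path of branch vertices'' cannot be converted into a path with attachments, because the attachments sit on the pendant vertices, not on the spine.

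In Case 1, Ramsey on ``$b_j,b_{j'}$ have a common neighbor'' does not lead to $K_{r,r}$. Let every path contain, in lexicographic order, vertices $v_{jj'}$ ($j<j'$), each adjacent to exactly $b_j$ and $b_{j'}$ among the $b$'s. Your auxiliary graph is then complete, yet no vertex sees more than two of the $b_j$, and there is no induced $K_{3,3}$. An ample constellation does exist here, but it is obtained by passing to suitable subpaths, which your argument never does.

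Also, a constellation needs $s$ vertices with neighbors in the \emph{same} $l$ paths. That requires a density argument, not merely that each $b_j$ sees ``many'' paths.
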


We will also need the following well-known result.

\begin{theorem} [Erdös, Szekeres \cite{ErdosSzekeres}]
\label{thm:erdos_szekeres}
For every $r \in \poi$, every sequence of distinct real numbers with length at least $r^2+1$ contains a monotonically increasing or a monotonically decreasing subsequence of length $r+1$.
\end{theorem}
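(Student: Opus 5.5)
The statement is the Erdős–Szekeres theorem, a classical result. I would prove it with the standard pigeonhole labelling argument.

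Let $a_1,\dots,a_N$ with $N\ge r^2+1$ be a sequence of distinct reals. Suppose, for contradiction, that it contains no monotonically increasing and no monotonically decreasing subsequence of length $r+1$. To each index $i\in\poi_N$ assign the pair $(x_i,y_i)$, where $x_i$ is the length of the longest increasing subsequence ending at $a_i$ and $y_i$ is the length of the longest decreasing subsequence ending at $a_i$. Both $x_i$ and $y_i$ are at least $1$, witnessed by the one-term subsequence $a_i$. By our assumption both are at most $r$, so every pair lies in $\poi_r\times\poi_r$, a set of only $r^2$ possible pairs.

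The key step is to show that the map $i\mapsto(x_i,y_i)$ is injective. Take indices $i<j$. Since the entries are distinct, either $a_i<a_j$ or $a_i>a_j$.
\begin{enumerate}[\rm (i)]
\item If $a_i<a_j$, append $a_j$ to a longest increasing subsequence ending at $a_i$. This gives an increasing subsequence ending at $a_j$ of length $x_i+1$, so $x_j\ge x_i+1$.
\item If $a_i>a_j$, append $a_j$ to a longest decreasing subsequence ending at $a_i$. This gives $y_j\ge y_i+1$.
\end{enumerate}
In either case $(x_i,y_i)\neq(x_j,y_j)$. Hence $N$ distinct pairs lie in a set of size $r^2$, so $N\le r^2$, contradicting $N\ge r^2+1$. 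Therefore some $x_i$ or some $y_i$ is at least $r+1$, which yields the desired monotone subsequence of length $r+1$.

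There is no real obstacle here. The only point needing care is the injectivity step, and specifically the fact that it relies on distinctness of the entries. Without distinctness the case $a_i=a_j$ would not strictly increase either label.
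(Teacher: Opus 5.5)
Your proof is correct and complete. It is the standard pigeonhole labelling argument (due to Seidenberg), and you handle the injectivity step properly. The paper gives no proof of this statement; it only cites Erd\H{o}s and Szekeres, so there is no competing argument to compare with.

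One remark is worth adding: your method does not really need distinctness. Let $x_i$ be the length of a longest \emph{non-decreasing} subsequence ending at $a_i$, and keep $y_i$ for strictly decreasing ones. Then for $i<j$ either $a_i\le a_j$, giving $x_j\ge x_i+1$, or $a_i>a_j$, giving $y_j\ge y_i+1$. The same count then shows that every real sequence of length at least $r^2+1$ has a non-decreasing or a strictly decreasing subsequence of length $r+1$.

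That weak form is what the paper actually needs in Lemmas~\ref{lem:wall_has_cycles} and~\ref{lem:alignment_has_cycles}. There the theorem is applied to path lengths that may coincide. Since the connecting subpaths in those lemmas grow strictly, weak monotonicity of the other path lengths already gives strictly increasing cycle lengths.
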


Now, we are ready to prove Lemmas \ref{lem:wall_has_cycles} and \ref{lem:Ktt_minor_has_cycles}.

\begin{lemma}
\label{lem:wall_has_cycles}
	For every $c\in \poi$, there is a constant $f_{\blackref{lem:wall_has_cycles}}=f_{\blackref{lem:wall_has_cycles}}(c)$ such that if a graph $G$ contains
	\begin{enumerate}[\rm (a)]
		\item a subdivision of $W_{f_{\blackref{lem:wall_has_cycles}} \times f_{\blackref{lem:wall_has_cycles}}}$,
		\item the line graph of a subdivision of $W_{f_{\blackref{lem:wall_has_cycles}} \times f_{\blackref{lem:wall_has_cycles}}}$, or
		\item a proper subdivision of $K_{f_{\blackref{lem:wall_has_cycles}}}$,
	\end{enumerate}
	then $\cl(G) \ge c$.
\end{lemma}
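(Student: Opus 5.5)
The plan is to treat each of (a), (b), (c) by finding a family of induced cycles in $G$ whose lengths are controlled by a few parameters, and then to force $c$ distinct lengths by a pigeonhole or Ramsey argument.

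\textbf{Case (a).} First I reduce to a proper subdivision. As noted at Figure \ref{fig:walls}, $W_{2r\times 2r}$ contains a proper subdivision of $W_{r\times r}$. So by enlarging $f_{\blackref{lem:wall_has_cycles}}$ we may assume $G$ contains a subdivision $H$ of a wall $W$ in which every edge of $W$ is subdivided at least once.

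Fix one row of $m$ consecutive bricks $b_1,\dots,b_m$. For $0\le i<j\le m$, let $D_{i,j}$ be the boundary of the union of bricks $b_{i+1},\dots,b_j$. The cycle $D_{i,j}$ is induced in $H$, and hence in $G$. The reason is that the only possible chords are the vertical paths shared by consecutive bricks, and these have nonempty interiors not on $D_{i,j}$.

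Write $v_j$ for the length of the $j$-th vertical path, and $S_j$ for the total length of the top and bottom boundary paths from $v_0$ to $v_j$. Then
\[|D_{i,j}|=S_j-S_i+v_i+v_j.\]
Here $S_j$ is strictly increasing in $j$. Set $a_j=S_j+v_j$, so that $|D_{0,j}|=a_j+v_0$.

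If the values $a_j$ take at least $c$ distinct values, we are done. Otherwise, since $m\ge c^2$, some value $a$ is attained by at least $c+1$ indices $j_0<j_1<\dots<j_c$. For $t<c$ we then have
\[|D_{j_t,j_c}|=a-S_{j_t}+v_{j_t}=2a-2S_{j_t},\]
and these numbers are strictly decreasing in $t$. This gives $c$ distinct lengths.

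\textbf{Case (b).} Let $H$ be a subdivided wall and suppose $G$ contains $L(H)$. For any cycle $C$ of $H$, the edges of $C$ induce in $L(H)$ a cycle of the same length: two edges of $C$ share an end only if they are consecutive on $C$. So every cycle length of $H$ occurs as an induced cycle length of $G$, and case (b) follows from the cycles $D_{i,j}$ of case (a). In fact no properness assumption is needed here.

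\textbf{Case (c).} Let $G$ contain a proper subdivision of $K_n$ with branch vertices $x_1,\dots,x_n$, and let $w(x_ix_j)\ge 2$ be the length of the path replacing the edge $x_ix_j$. For any distinct $x_{i_1},\dots,x_{i_k}$, the cycle $x_{i_1}\dd\cdots\dd x_{i_k}\dd x_{i_1}$ built from the subdivided paths is induced. This holds because the subdivision is induced, and distinct subdivided paths meet only at branch vertices and have no edges between their interiors.

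Next I apply the canonical Ramsey theorem (Erdős--Rado) to the edge colouring $w$, passing to a large subclique on which $w(x_ix_j)$, $i<j$, is either constant, a function of $i$ only, a function of $j$ only, or injective.

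In the first three cases, the cycles on $x_1,\dots,x_k$ for $k=3,\dots,c+2$ have strictly increasing lengths, since adding a vertex adds a positive amount. For example, if $w(x_ix_j)=f(i)$ with $f\ge 2$, the length of the cycle on $x_1,\dots,x_k$ is $f(1)+\dots+f(k-1)+f(1)$. In the injective case I would compare triangles $x_1x_jx_l$. If fewer than $c$ triangle lengths occur, pigeonhole yields many triangles sharing one length. From these I would derive linear relations among the weights that contradict injectivity, or else produce $4$-cycles of new lengths, using Theorem \ref{thm:erdos_szekeres} to order the weights monotonically.

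I expect this injective case of (c) to be the main obstacle. The subtlety is that all cycles on a fixed number of branch vertices might have the same length. The first thing I would try is to show that the four-cycle lengths, the triangle lengths and their differences cannot all collapse once the weights are pairwise distinct and monotone.
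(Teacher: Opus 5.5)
Your arguments for (a) and (b) are correct, and they differ mildly from the paper's. For (a), the paper applies Theorem~\ref{thm:erdos_szekeres} to the rung lengths of a ladder and anchors all cycles at the first rung. You instead use a pigeonhole dichotomy on $a_j=S_j+v_j$: either the cycles $D_{0,j}$ already give $c$ lengths, or a repeated value $a$ makes $|D_{j_t,j_c}|=2a-2S_{j_t}$ strictly monotone. For (b), your remark that the edges of \emph{any} cycle of $H$ induce a cycle of the same length in $L(H)$ makes the case immediate and removes the need for properness.

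The gap is in (c). The injective case of your canonical Ramsey split is not proved; you only describe what you would try. Moreover, pigeonhole on triangle lengths does not produce the configuration you would need. That configuration is, for example, four branch vertices $a,b,c,d$ whose four triangles all have the same length, which forces $w(ac)=w(bd)$; finding such a 4-set requires Ramsey's theorem for triples, not pigeonhole.

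There is also a slip in the ``function of $j$ only'' case. On $x_1,\dots,x_k$ the closing edge $x_kx_1$ has weight $g(k)$, so going from $k$ to $k+1$ changes the length by $2g(k+1)-g(k)$, which can be zero. For instance, $g(2)=8$, $g(3)=4$, $g(4)=2$ gives length $16$ for both $k=3$ and $k=4$. You would need to reverse the vertex order there.

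More to the point, the canonical Ramsey detour is unnecessary: the idea you are missing is that (c) reduces to (a). Let $H$ be an induced proper subdivision of $K_n$ and $F$ any graph on a subset of its branch vertices. Then the branch vertices of $F$, together with the subdivision paths of the edges of $F$, induce a proper subdivision of $F$ in $G$. Branch vertices are pairwise nonadjacent because every edge is subdivided, and interiors of distinct subdivision paths are anticomplete because $H$ is induced. Taking $F$ to be the ladder $W_{1\times c^2}$, i.e.\ a single row of $c^2$ bricks, your argument for (a) applies verbatim, since it only ever used one row of bricks. This is exactly how the paper handles (a) and (c) together.
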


\begin{proof}

Define $f_{\blackref{lem:wall_has_cycles}} = 2c^2$. Suppose that $G$ contains an induced subgraph $H$ isomorphic to a subdivision of $W_{f_{\blackref{lem:wall_has_cycles}}\times f_{\blackref{lem:wall_has_cycles}}}$ or a proper subdivision of $K_{f_{\blackref{lem:wall_has_cycles}}}$. Then, observe that $H$ contains an induced subgraph $H'$ isomorphic to a proper subdivision of $W_{1\times c^2}$ (see Figure \ref{fig:walls}). Let $H'$ be formed by the paths $P,Q$ and $R_1,\dots,R_{c^2+1}$.

By Theorem \ref{thm:erdos_szekeres}, there is a subsequence $R_{i_1},\dots,R_{i_{c+1}}$ such that the lengths of the paths in this subsequence are either increasing or decreasing. We may assume that the lengths are increasing by reindexing the paths. Then, the induced subgraph $H''$ formed by the paths $P,Q$ and $R_{i_1},\dots,R_{i_{c+1}}$ contains at least $c$ induced cycles of strictly increasing lengths. Thus, $\cl(G)\ge c$ as required.

Now, suppose that $G$ contains an induced subgraph $H$ isomorphic to the line graph of a subdivision of $W_{f_{\blackref{lem:wall_has_cycles}}\times f_{\blackref{lem:wall_has_cycles}}}$. Observe that $H$ contains an induced subgraph $H'$ isomorphic to the line graph of a proper subdivision of $W_{1\times c^2}$. By an argument analogous to the one in the previous paragraph, it follows that $\cl(G)\ge c$ as required.

\end{proof}

\begin{lemma}
\label{lem:Ktt_minor_has_cycles}
For all $c,t\in \poi$, there is a constant $f_{\blackref{lem:Ktt_minor_has_cycles}} = f_{\blackref{lem:Ktt_minor_has_cycles}}(c,t)$ such that if a $K_{t,t}$-free graph $G$ has an induced $K_{f_{\blackref{lem:Ktt_minor_has_cycles}},f_{\blackref{lem:Ktt_minor_has_cycles}}}$-minor, then $\cl(G) \ge c$.
\end{lemma}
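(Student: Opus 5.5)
We will apply Theorem \ref{thm:Ktt_minor_has_obstructions} with $r=\max(t,f_{\blackref{lem:wall_has_cycles}}(c))$, and with $s$ and $l$ chosen later as functions of $c$. Set $f_{\blackref{lem:Ktt_minor_has_cycles}}(c,t)=f_{\blackref{thm:Ktt_minor_has_obstructions}}(l,r,s)$. If outcome (a) holds, the graph $G$ cannot contain $K_{r,r}$, because $r\ge t$ and $G$ is $K_{t,t}$-free. So $G$ contains a subdivision of $W_{r\times r}$ or the line graph of one, and Lemma \ref{lem:wall_has_cycles} gives $\cl(G)\ge c$. It remains to handle an ample $(s,l)$-constellation $\mathfrak c=(S_{\mathfrak c},\mathcal L_{\mathfrak c})$ and to extract from it $c$ induced cycles of distinct lengths.

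The plan is to build a structure like a proper subdivision of $W_{1\times m}$, that is, a family of induced cycles sharing a long common part, where the variable part has lengths ranging over many values. Fix two paths $L_1,L_2\in\mathcal L_{\mathfrak c}$ and many vertices $s_1,\dots,s_m\in S_{\mathfrak c}$.

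For each $s_i$ and each $L_j$, take the first and last neighbours of $s_i$ along $L_j$. Passing to subsets via Theorem \ref{thm:erdos_szekeres}, and Ramsey-type arguments on the relative orders along $L_1$ and $L_2$, we may assume the following: the neighbour sets of the $s_i$ on $L_1$ occur in a fixed order with pairwise disjoint spans, and the order on $L_2$ is the same or reversed. Ampleness helps here, since no two $s_i$ share a neighbour.

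For $i<i'$, consider the cycle that goes through $s_i$, then along $L_1$ from the last neighbour of $s_i$ to the first neighbour of $s_{i'}$, then through $s_{i'}$, then back along $L_2$. Choosing the subpaths minimally makes this cycle induced: $S_{\mathfrak c}$ is stable, $L_1$ and $L_2$ are anticomplete components, and the minimal choice rules out chords. Its length is roughly $4+d_1(i,i')+d_2(i,i')$, where $d_j$ is the distance along $L_j$ between the relevant neighbourhoods.

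Now fix $i=1$ and vary $i'$. If the orders on $L_1$ and $L_2$ agree, both distances grow with $i'$ in disjoint-span order. If the orders are reversed, we instead route through a third path $L_3$, or compare $i'$ against both extremes. Either way the lengths $|C_{1,i'}|$ are strictly increasing in $i'$, because moving from $s_{i'}$ to $s_{i'+1}$ strictly extends both subpaths. This gives $m-1$ distinct lengths, so choosing $m=c+1$ suffices, with $s$ large enough to survive the Ramsey and Erd\H{o}s--Szekeres reductions and $l=3$.

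The main obstacle is that the spans of the neighbourhoods of different $s_i$ on a single $L_j$ may interleave rather than being disjoint. A vertex $s_i$ can have many neighbours spread along $L_j$, so ``first'' and ``last'' neighbours are not well separated, and the induced-cycle construction and the monotonicity of lengths can both fail. The first attempt at a fix is to take large $l$ and use the many paths to find, for a subfamily of the $s_i$, two paths on which the neighbourhoods are ``tidy.''

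If interleaving persists, there is an alternative: take short paths inside $L_1$ between consecutive neighbours of a single $s_i$. These give induced cycles $s_i$--(subpath of $L_1$), whose lengths are the gaps between consecutive neighbours plus two. If some $s_i$ has $c$ distinct gap lengths we are done immediately. Otherwise every $s_i$ has boundedly many gap types, and pigeonholing reduces us to a more regular structure in which the distance argument above goes through.
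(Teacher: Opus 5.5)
There is a genuine gap, and it is the one you flag yourself. The reduction to ``pairwise disjoint spans, in the same or reversed order on $L_1$ and $L_2$'' does not follow from Erd\H{o}s--Szekeres or Ramsey arguments. It can fail on every path and for every subfamily of $S_{\mathfrak c}$ of size at least two.

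For example, take $m,k\ge 2$, write each $L\in\mathcal L_{\mathfrak c}$ as $v_1\dd\cdots\dd v_{km}$, and let $s_a$ ($a\in\poi_m$) be adjacent exactly to $v_a,v_{a+m},\dots,v_{a+(k-1)m}$.
\begin{itemize}
\item This constellation is ample.
\item On every path, any two spans overlap.
\item Every gap between consecutive neighbours of every $s_a$ equals $m$, so your single-vertex fallback produces only the length $m+2$, and there is no further ``regular structure'' to pigeonhole into.
\end{itemize}
Your cycle running from the last neighbour of $s_1$ to the first neighbour of $s_{i'}$ then picks up chords. So neither inducedness nor monotonicity of the lengths is established, and the choice $l=3$ rests on the unproven reduction.

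The missing idea is to never use last neighbours. For each path $L_i$, take a minimal subpath $L_i'=v_i^1\dd\cdots\dd v_i^{k_i}$ on which every vertex of $S_{\mathfrak c}$ has a neighbour. By minimality, some $s(i)\in S_{\mathfrak c}$ has $v_i^1$ as its \emph{only} neighbour in $L_i'$; this vertex serves as a chord-free anchor. Order $S_{\mathfrak c}$ by the index $j(i,s)$ of the first neighbour of $s$ in $L_i'$; these indices are distinct by ampleness.

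Now take $|S_{\mathfrak c}|=c+1$ and $l=(c+1)(c!+1)$ paths. Pigeonhole gives two paths $L_q,L_w$ with the same anchor $s(q)=s(w)=s_1$ and the same first-neighbour order $s_1,\dots,s_{c+1}$. For each $i\ge 2$, consider
$s_1\dd v_q^1\dd\cdots\dd v_q^{j(q,s_i)}\dd s_i\dd v_w^{j(w,s_i)}\dd\cdots\dd v_w^1\dd s_1$.
This cycle is induced because:
\begin{itemize}
\item $s_i$ has no neighbour before its first one;
\item $s_1$ has no neighbour in $L_q'\cup L_w'$ other than $v_q^1$ and $v_w^1$;
\item $S_{\mathfrak c}$ is stable, and $L_q,L_w$ are anticomplete.
\end{itemize}
Its length is $j(q,s_i)+j(w,s_i)+2$, which strictly increases in $i$. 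Interleaving of spans is irrelevant in this construction, since only the initial segments up to first neighbours are ever used.
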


\begin{proof}

	Define \begin{align*}
		r&= \max \{t,f_{\blackref{lem:wall_has_cycles}}(c)\}\\
		l &= (c+1)(c!+1)\\
		f_{\blackref{lem:Ktt_minor_has_cycles}} &= f_{\blackref{thm:Ktt_minor_has_obstructions}}(l,r,c+1)
	\end{align*}
	By Theorem \ref{thm:Ktt_minor_has_obstructions}, $G$ contains
	\begin{enumerate}[\rm (a)]
		\item \label{point:Ktt_minor_has_cycles_a} $K_{r,r}$, a subdivision of $W_{r \times r}$, the line graph of a subdivision of $W_{r \times r}$, or
		\item an ample $(c+1, l)$-constellation.
	\end{enumerate}
	
	Suppose that $\cl(G) < c$. Since $G$ is $K_{t,t}$-free and $t \le r$, Lemma \ref{lem:wall_has_cycles} implies that $G$ contains an ample $(c+1, l)$-constellation $\mf c$. Let $\mca L_{\mf c} = \{L_1,\dots,L_l\}$. For each $i\in \poi_l$, let $L_i'=v_i^1\dd \dots \dd v_i^{k_i}$ with $k_i\in \poi$ be a minimal subpath of $L_i$ such that each $s\in S_{\mf c}$ has a neighbor in $L_i'$ and let $s(i)$ be a vertex of $S_{\mf c}$ such that its only neighbor in $L_i'$ is $v_i^1$. By the choice of $l$, there exists a vertex $s_1$ and a set $I\subseteq \poi_l$ of size $c!+1$ such that $s_1=s(i)$ for each $i\in I$.
	
	For each $i\in I$ and $s\in S_{\mf c}$, let $j=j(i,s)\in \poi $ be minimum such that $s$ is adjacent to $v_i^j$, i.e., $j(i,s)$ is the index of the first neighbor of $s$ in $L_i'$. Because $\mf c$ is ample and $|I| = c!+1$, there are $q,w\in I$ and an ordering $s_1,\dots,s_{c+1}$ of the vertices of $S_{\mf c}$ such that $j(q,s_a)<j(q,s_b)$ and $j(w,s_a)<j(w,s_b)$ for $a<b$, i.e., the first neighbors of $s_1,\dots,s_{c+1}$ appear in the same order in both $L_q'$ and $L_w'$. Finally, observe that $s_1\dd v_q^1\dd v_q^{j(q,s_i)} \dd s_i \dd v_w^{j(w,s_i)} \dd v_w^1\dd s_1$ for $i=2,\dots,c+1$ forms $c$ induced cycles of increasing lengths. Thus, $\cl(G)\ge c$ as required.
	
\end{proof}

\subsection{Handling domes}

In this subsection, we prove Lemma \ref{lem:alignment_has_cycles} that shows that if a graph $G$ contains a large enough dome, then it also contains induced cycles of many distinct lengths. As a result, the proof of Theorem \ref{thm:main_result} will be reduced to finding large domes in graphs. We start by introducing a lemma, which follows immediately from Lemma 4.4 in \cite{tw17}.

\begin{lemma} [Chudnovsky, Hajebi, Spirkl \cite{tw17}] \label{lem:get_an_alignment}

For all $d,s,t\in \poi$, there is a constant $f_{\blackref{lem:get_an_alignment}}=f_{\blackref{lem:get_an_alignment}}(d,s,t)$ such that every graph $G$ with an $(f_{\blackref{lem:get_an_alignment}}, d)$-dome contains
\begin{enumerate}[\rm (a)]
	\item an aligned $(s,1)$-dome; or
	\item an induced $K_{t,t}$-minor.
\end{enumerate}
\end{lemma}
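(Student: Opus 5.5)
We propose to derive the lemma directly from Lemma 4.4 in \cite{tw17}, so most of the work is translating definitions. Lemma 4.4 of \cite{tw17} is stated for a structure of the same shape: a subdivided star (or a family of vertices $L$ attached to paths) whose leaves all have neighbors on a path $P$, with each vertex of $P$ seeing few leaves. It concludes that either there is a large ``aligned'' substructure (the leaves' neighborhoods on $P$ occupy pairwise disjoint consecutive intervals of $P$, in order), or $G$ has an induced $K_{t,t}$-minor. The plan has three steps. First, match the $(s,d)$-dome to the hypothesis of that lemma: $S$ is a subdivided star with leaf set $L$, and $V(S)\setminus L$ is anticomplete to $P$. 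Second, check that the bounded-degree condition (each $u\in P$ sees at most $d$ leaves) is what the cited lemma needs. Third, check that its output, a subset $L'\subseteq L$ of size $s$ together with disjoint subpaths $u_i\dd P\dd v_i$ appearing in order along $P$, gives an aligned dome. For the last point we restrict $S$ to the union of the branches ending in $L'$; this is still a subdivided star with root of degree at least three once $s\ge 3$, and all dome conditions are inherited.

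If the citation does not apply verbatim, we would prove the statement directly as follows. For each leaf $l\in L$, let $I(l)$ be the minimal subpath of $P$ containing $N(l)\cap P$. Form the intersection graph $H$ on $L$ in which $l\sim l'$ if $I(l)\cap I(l')\neq\emptyset$. A stable set of size $s$ in $H$ is exactly an aligned dome: order the intervals along $P$, and the conditions hold. Intervals on a path form an interval graph, which is perfect, so it suffices to rule out a clique of size $m$ in $H$ for large $m$, by Theorem \ref{thm:ramsey} or perfectness.

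A large clique of intervals shares a common vertex $p\in P$, by the Helly property. At most $d$ of these leaves are adjacent to $p$. Each of the remaining leaves has neighbors on both sides of $p$ along $P$. Write $P_1,P_2$ for the two components of $P\setminus\{p\}$, so each remaining leaf has neighbors in both $P_1$ and $P_2$.

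We now build the $K_{t,t}$-minor. The branches of $S$ through these leaves meet at the root, and the vertices of each branch other than its leaf are anticomplete to $P$. Partition $P_1$ into consecutive segments and partition the leaf-branches into groups, aiming at a constellation-like configuration in which $t$ branch sets come from star parts and $t$ branch sets come from subpaths of $P_1\cup P_2$. This is also where the bounded $d$ is used: it guarantees each vertex of $P$ sees few leaves, so that we can pass to many leaves whose attachment points are spread out along $P$. One natural route is to produce many leaves each with neighbors in both $P_1$ and $P_2$, whose neighborhood patterns along $P_1$ and $P_2$ are monotone (by Theorem \ref{thm:erdos_szekeres}), and then contract.

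The main obstacle is this final contraction step: arranging the branch sets so that the minor is genuinely induced, meaning the required non-adjacencies hold between branch sets on the same side. The first attempt would be to take branch sets $\{$subpath of a leg together with its leaf$\}$ against consecutive segments of $P$ cut between attachment points, using the monotonicity to control adjacencies. If that gets too technical, we would fall back on citing \cite{tw17} as stated.
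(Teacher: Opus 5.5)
Your primary route, deriving the statement from Lemma 4.4 of \cite{tw17} by matching definitions, is exactly what the paper does. The paper gives no argument beyond that citation.

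One detail of your translation needs adjusting. In the third step you expect intervals $u_i\dd P\dd v_i$ on the original path $P$. In general the aligned dome must be allowed to use a subpath $P'$ of $P$, because alignment on $P$ itself can be impossible (example below). Restricting to $P'$ and to the legs of the chosen leaves preserves every dome axiom, provided each chosen leaf keeps a neighbor in $P'$.

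The same point breaks your fallback, and not only at the contraction step you flag. The dichotomy ``a stable set of size $s$ in $H$, or an induced $K_{t,t}$-minor'' is false. Let $m\ge 3$ and $P=a_m\dd\cdots\dd a_1\dd p\dd b_1\dd\cdots\dd b_m$. Let $l_1,\dots,l_m$ be pairwise nonadjacent with $N(l_i)\cap V(P)=\{a_i,b_i\}$, and let $x$ be adjacent exactly to $l_1,\dots,l_m$.
\begin{itemize}
\item This is an $(m,1)$-dome, and every $I(l_i)$ contains $p$, so $H$ is complete.
\item Yet $G\setminus\{x\}$ is a subdivided ladder, so $\tw(G)\le 3$ and $G$ has no $K_{4,4}$-minor at all.
\item The lemma holds here only because the path $a_m\dd P\dd a_1$ gives an aligned dome.
\end{itemize}
So in the clique case you must shrink the path, not look for a minor.

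A short direct proof that does this:
\begin{itemize}
\item Fix an end $u$ of $P$, and let $\alpha(l)$ be the first neighbor of $l$ on $P$ starting from $u$.
\item Each vertex of $P$ sees at most $d$ leaves, so you can keep $|L|/d$ leaves with distinct $\alpha$, ordered $l_1,l_2,\dots$ along $P$.
\item Color a triple $i<j<k$ red if $l_i$ has a neighbor in $\alpha(l_j)\dd P\dd\alpha(l_k)$, and apply Ramsey's theorem for triples.
\end{itemize}
If $l_1,\dots,l_{s+1}$ span no red triple, let $P'=u\dd P\dd w$, where $w$ is the vertex of $P$ immediately before $\alpha(l_{s+1})$. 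Suppose $l_a$ had a neighbor in $P'$ at or after $\alpha(l_{a+1})$. That neighbor would lie in some $\alpha(l_b)\dd P\dd\alpha(l_{b+1})$ with $a<b\le s$, making $(a,b,b+1)$ red. Hence $P'$ together with the legs of $l_1,\dots,l_s$ is an aligned $(s,1)$-dome.

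If $l_1,\dots,l_{4t}$ span only red triples, take the subpaths $Q_r=\alpha(l_{t+3r-2})\dd P\dd\alpha(l_{t+3r-1})$ for $r\in\poi_t$.
\begin{itemize}
\item They are pairwise anticomplete, since consecutive ones are separated by $\alpha(l_{t+3r})$.
\item Each contains a neighbor of every $l_i$ with $i\le t$.
\end{itemize}
Contracting them in $G[\{l_1,\dots,l_t\}\cup V(Q_1)\cup\cdots\cup V(Q_t)]$ yields an induced $K_{t,t}$-minor. This argument uses the star only to know that the leaves are pairwise nonadjacent, and uses $d$ only to make the first neighbors distinct.
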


We prove:

\begin{lemma} \label{lem:alignment_has_cycles}
Let $G$ be a graph with no $K_{t,t}$-induced minor.
For all $c,d,t \in \poi$, there exists a constant $f_{\blackref{lem:alignment_has_cycles}}=f_{\blackref{lem:alignment_has_cycles}}(c,d,t)$ such that if $G$ contains an $(f_{\blackref{lem:alignment_has_cycles}},d)$-dome, then $\cl(G) \ge c$.
\end{lemma}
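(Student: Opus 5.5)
The plan is to reduce to an \emph{aligned} dome using Lemma \ref{lem:get_an_alignment}, and then to read off induced cycles of distinct lengths directly from the aligned structure, using a monotone-subsequence argument on the lengths of the legs of the star. Concretely, set $s=c^2+2$ and $f_{\blackref{lem:alignment_has_cycles}}(c,d,t)=f_{\blackref{lem:get_an_alignment}}(d,s,t)$. Since $G$ has no induced $K_{t,t}$-minor, Lemma \ref{lem:get_an_alignment} gives an aligned $(s,1)$-dome $(S,P)$ in $G$. Let $r$ be its root and $l_1,\dots,l_s$ the leaves, ordered as in the definition of alignment, with subpaths $u_i\dd P\dd v_i$. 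Let $Q_i$ be the leg of $S$ from $r$ to $l_i$, and let $q_i$ be its length.

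\textbf{The cycles.} For $i<j$, let $C_{i,j}$ consist of $Q_i$, $Q_j$, the edges $l_iv_i$ and $l_ju_j$, and the subpath $v_i\dd P\dd u_j$. I would check that $C_{i,j}$ is an induced cycle:
\begin{itemize}
\item $Q_i\cup Q_j$ is an induced path through $r$, because $S$ is an induced subdivided star.
\item $V(S)\setminus L$ is anticomplete to $P$.
\item By alignment, all neighbours of $l_i$ in $P$ lie in $u_i\dd P\dd v_i$, so the only neighbour of $l_i$ on $v_i\dd P\dd u_j$ is $v_i$. Symmetrically, the only neighbour of $l_j$ there is $u_j$.
\item Since $u_i,v_i,u_j,v_j$ appear on $P$ in this order, the segment $v_i\dd P\dd u_j$ is nonempty and disjoint from the other blocks' endpoints as needed.
\end{itemize}
Its length is
\[ |C_{i,j}| = q_i+q_j+2+\operatorname{dist}_P(v_i,u_j). \]
Moreover, $\operatorname{dist}_P(v_i,u_j)$ is strictly increasing in $j$ for fixed $i$, and strictly decreasing in $i$ for fixed $j$.

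\textbf{The monotone-subsequence step.} This is the only delicate point, because the leg lengths $q_i$ are arbitrary and may cancel the growth along $P$. I would apply the non-strict form of Theorem \ref{thm:erdos_szekeres} to the sequence $q_2,\dots,q_s$. This sequence has length $c^2+1$ (ties are allowed; break them by index to get distinct reals), so it has a subsequence $q_{i_1},\dots,q_{i_{c+1}}$, with $i_1<\dots<i_{c+1}$, that is either nondecreasing or nonincreasing.
\begin{itemize}
\item If it is nondecreasing, fix $i=1$ and vary $j\in\{i_1,\dots,i_{c}\}$. Both $q_j$ and $\operatorname{dist}_P(v_1,u_j)$ are then nondecreasing in $j$, the latter strictly, so the lengths $|C_{1,j}|$ are strictly increasing.
\item If it is nonincreasing, fix $j=i_{c+1}$ and vary $i\in\{i_1,\dots,i_c\}$. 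Now $q_i$ is nonincreasing and $\operatorname{dist}_P(v_i,u_j)$ is strictly decreasing in $i$, so the lengths $|C_{i,j}|$ are strictly decreasing.
\end{itemize}
In either case we obtain $c$ induced cycles of pairwise distinct lengths, so $\cl(G)\ge c$.

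The main obstacle is handling the arbitrary leg lengths, and it is resolved by choosing which endpoint of the cycle to fix according to the direction of monotonicity. Everything else is a routine check that the cycles are induced, which follows directly from the dome axioms and alignment.
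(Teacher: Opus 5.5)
Your proof is correct and is essentially the paper's argument: you pass to an aligned $(s,1)$-dome by Lemma~\ref{lem:get_an_alignment}, apply Erd\H{o}s--Szekeres to the leg lengths, and take the cycles formed by the root, two legs and the stretch of $P$ between them. Your choice of fixing the first or the last endpoint does the job of the paper's ``reindexing'' (reversing $P$), and your tie-breaking makes explicit the non-strict monotonicity the paper uses implicitly. Two small remarks. First, alignment does not force $l_iv_i$ and $l_ju_j$ to be edges, so you should first shrink each $u_i\dd P\dd v_i$ to run from the first to the last neighbour of $l_i$ on $P$; this preserves alignment. Second, $s=c^2+1$ already suffices if, in the nondecreasing case, you fix $i_1$ rather than $1$.
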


\begin{proof}

	Define $s = c^2+1$ and $f_{\blackref{lem:alignment_has_cycles}} = f_{\blackref{lem:get_an_alignment}}(d,s,t)$. By Lemma \ref{lem:get_an_alignment}, $G$ contains an aligned $(s,1)$-dome $(S,P)$. Let $x$ be the root of $S$ and let $L=\{l_1,\dots, l_s\}$ be its leaf set such that $P$ traverses $N(l_1)\cap P,\dots,N(l_s)\cap P$ in this order. For $i\in \poi_s$, let $S_i$ be the $x\dd l_i$ path in $S$.
	
	By Theorem \ref{thm:erdos_szekeres}, there is a sequence of $c+1$ paths $S_{i_1},\dots,S_{i_{c+1}}$ of either increasing or decreasing length. We may assume that the lengths are increasing by reindexing if needed. For $j=2,\dots,c+1$, let $P_{i_j}$ be the subpath of $P$ such that $l_{i_1}\dd P_{i_j} \dd l_{i_j}$ is a path. Note that the paths $P_{i_2},\dots,P_{i_{c+1}}$ have strictly increasing lengths. Therefore, $x\dd S_{i_1} \dd l_{i_1} \dd P_{i_j}\dd l_{i_j}\dd S_{i_j} \dd x$ for $j=2,\dots,c+1$ are induced cycles in $G$ of strictly increasing lengths. It follows that $\cl(G)\ge c$ as claimed.
	
\end{proof}

\section{Reducing to a banana}
\label{sec:reducing_to_a_banana}

In this section, we derive Theorem \ref{thm:main_result} using lemmas from Section \ref{sec:obstructions}, Theorem \ref{thm:banana_has_cycles} stated shortly, and two results from \cite{tw7} and \cite{tw17}. We will prove Theorem \ref{thm:banana_has_cycles} in Sections \ref{sec:analyzing_bananas}, \ref{sec:banana_to_kite} and \ref{sec:general_step}.

\begin{restatable}{theorem}{bananaHasCycles}\label{thm:banana_has_cycles}
For all $l,s,t\in \poi$, there are constants $f_{\blackref{thm:banana_has_cycles}}=f_{\blackref{thm:banana_has_cycles}}(l,s,t)$ and $g_{\blackref{thm:banana_has_cycles}}=g_{\blackref{thm:banana_has_cycles}}(t)$ such that for every $t$-tidy graph $G$ that contains an $l$-dense $f_{\blackref{thm:banana_has_cycles}}$-banana $B_{x,z}$, there is an $(s,g_{\blackref{thm:banana_has_cycles}})$-dome with root $x$ in $G$.
\end{restatable}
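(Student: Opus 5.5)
Throughout, we may assume $G$ has no $(s,g)$-dome with root $x$ for suitable $g=g_{\blackref{thm:banana_has_cycles}}(t)$ and aim for a contradiction. We follow the outline announced in the introduction and build the dome in three stages: rigid banana, then clean kite, then an iteration of kites.

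\textbf{Step 1: pass to a rigid banana.} Since $B_{x,z}$ is $l$-dense, the ``anticomplete'' relation on the paths of $\mathcal P$ has no independent set of size $l$ in its complement graph. We apply Theorem \ref{thm:ramsey} to the graph on $\mathcal P$ in which two paths are adjacent if their interiors are not anticomplete. Choosing $f_{\blackref{thm:banana_has_cycles}}\ge M^{l}$, an independent set of size $l$ would give an $l$-theta inside $V(\mathcal P)$, so we obtain a clique of size $M$, that is, a rigid $M$-banana. Next we analyse a rigid banana (Section \ref{sec:analyzing_bananas}). Each neighbor of $x$ on a path $P_i$ is the first vertex $a_i$. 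Using $K_{t+1}$-freeness together with Theorem \ref{thm:ramsey}, we may assume the $a_i$ form a stable set. Using the absence of induced $K_{t,t}$-minors, we bound how many paths a single vertex can see. Then one of two things happens. Either a single path $P$ together with many $a_i$'s (after shortening $x\dd P_i\dd$ to the first vertex with a neighbor in another path) yields a dome rooted at $x$ with bounded $d$, or we extract a theta $\Theta_{x,y}$ in which every vertex of $N(x)$ on the theta has degree two in $G[\text{theta}]$. The dome in the first alternative uses star paths $x\dd a_i\dd\cdots$ and a path $P$ built from another path of the banana; its bounded-degree condition is where $K_{t,t}$ and $K_{t+1}$-freeness give $g(t)$.

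\textbf{Step 2: build a kite and clean it.} Because the banana is large, we can partition it into many disjoint sub-bananas, each still large and rigid. Applying Step 1 in each gives $k$ tailed thetas $\Theta_{x,y_i,z}$, where the tail runs from $y_i$ to $z$ along the remaining part of the banana. These are disjoint apart from $\{x,z\}$, so they form an $(x,z)$-tailed $(k,q)$-kite. We then clean the kite. Extra edges from internal theta vertices to other thetas or tails are again controlled: many such edges produce a dome rooted at $x$, since the internal vertices of the thetas can serve as leaves hitting a common tail path. Finally, Theorem \ref{thm:ramsey} makes $\{y_i\}$ stable, using $K_{t+1}$-freeness.

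\textbf{Step 3: iterate kites and assemble the dome.} We apply Step 2 recursively, $s$ levels deep (Section \ref{sec:general_step}). Inside the tails of one clean kite we find a large banana again: the tails all end at $z$, so their union with the paths $y_i$-to-$x$ has the structure of a banana from $x$ or $y_i$ to $z$. This banana is still dense, because a large theta would give a large theta between $x$ and $z$ in the original banana. From it we obtain the next kite. The clean thetas at each level supply disjoint induced paths from $x$, and these become the legs of a subdivided star $S$ rooted at $x$. The last kite provides the path $P$ that every leaf touches. Cleanness ensures that the star interiors are anticomplete to $P$, and the $K_{t,t}$-type bound gives each vertex of $P$ at most $g(t)$ leaf neighbors. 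With each level taking a logarithm or Ramsey bound of the previous one, the function $f_{\blackref{thm:banana_has_cycles}}$ is a tower.

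The main obstacle is Step 1, the analysis of rigid bananas. There one must show that in a $t$-tidy graph the many cross-edges between paths either organize into a bounded-degree dome or can be avoided to leave a theta whose neighbors of $x$ have degree two. I would first try taking minimal counterexamples on path lengths and using the tidy hypothesis to forbid a vertex with neighbors on $t$ paths, arranged as a $K_{t,t}$ induced minor.
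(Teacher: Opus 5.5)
Your Steps 1 and 2 are fine. Beyond the Ramsey step, which is the paper's claim (1), their content is already packaged in Theorem \ref{thm:banana_to_clean_kite}, and you can simply invoke it with $q=2$. So the ``main obstacle'' you name is already settled earlier in the paper. The real work of this theorem is Step 3, and there your proposal has a genuine gap.

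You assert that legs supplied by thetas of the earlier kites, together with a path from the last kite, form a dome in which ``every leaf touches $P$''. Nothing you wrote produces these adjacencies. By cleanness, interior vertices of a theta $\Theta_{x,y_i}$ at level $m$ have degree two in that kite, and everything built later lies inside that kite. So the end of a leg taken from $\Theta_{x,y_i}$ can only be adjacent to $P$ at $y_i$. You would therefore have to choose, at each level, exactly the theta whose centre lies on the path selected at the next level, and prove that this path really runs through that centre. The proposal does neither.

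You also need the attachment vertices from different levels to be distinct. Your appeal to a ``$K_{t,t}$-type bound'' does not give this. Several legs ending at neighbours of one vertex $y$ just form a theta with ends $x,y$, which tidiness does not exclude, and $l$-density only excludes thetas with ends $x,z$. (In the paper, $g$ only enters through the dome alternatives of the earlier lemmas; the constructed dome is an $(s,1)$-dome.)

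The paper's missing ingredient is the fleet.
\begin{enumerate}
\item Each $2$-theta $\Theta_{x,y_i}$ of the clean kite is an induced cycle. Contracting it to the edge $xy_i$ gives an $x\dd z$ banana $B''$. This is an induced minor obtained by contracting edges with an end of degree two, and $\{y_i\}$ is stable. Hence $B''$ is still $t$-tidy, $l$-dense, and has no dome rooted at $x$.
\item Induction gives an $(s-1)$-fleet in $B''$: a subdivided full binary tree whose vertices adjacent to $x$ are exactly its leaves, here some of the $y_i$.
\item Expanding each leaf edge $xy_i$ back into the cycle $\Theta_{x,y_i}$ adds one branching level, giving an $s$-fleet. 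Because $y_i$ is adjacent to $x$ in $B''$, no $y_i$ can be a theta end at the next level. This is what keeps branch vertices of different levels distinct, and the nesting is built into the induction.
\item Lemma \ref{lem:fleet} then gives an $(s,1)$-dome. $P$ is a root-to-leaf path of the tree, and the legs run from $x$ through the sibling subtrees hanging off $P$.
\end{enumerate}
If you want to keep your induced-subgraph variant, retaining one path of each theta instead of contracting, you must supply both the nested selection and the distinctness of centres across levels yourself. The contraction provides both automatically.
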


Let $k, m \in \poi$ and let $G$ be a graph. A $(k, m)$-\textit{block $B$ in $G$} is a pair $(X, \mathcal{P})$ where $X \subseteq V(G)$ and $\mathcal{P}$ is a set of paths with ends in $X$ such that 
\begin{itemize}

	\item $|X| = k$;
	\item for all $\{x,y\}\subseteq X$, there is a set $\mathcal{P}_{\{x, y\}} \subseteq \mathcal P$ of $m$ pairwise internally disjoint $x\dd y$-paths;
	\item $\mathcal P = \cup_{\{x,y\}\subseteq X} \mathcal{P}_{\{x, y\}}$.
	
\end{itemize}
We say that $B$ is \textit{strong} if for all distinct pairs $\{x, y\},\left\{x^{\prime}, y^{\prime}\right\}$ of $X$, we have $V\left(\mathcal{P}_{\{x, y\}}\right) \cap V\left(\mathcal{P}_{\left\{x^{\prime}, y^{\prime}\right\}}\right)=\{x, y\} \cap\left\{x^{\prime}, y^{\prime}\right\}$; that is, each path $P \in \mathcal{P}_{\{x, y\}}$ is disjoint from each path $P^{\prime} \in \mathcal{P}_{\left\{x^{\prime}, y^{\prime}\right\}}$, except $P$ and $P^{\prime}$ may share an end. We call $B$ \textit{anticomplete} if for every pair of vertices $x,y\in X$, the paths in $\mathcal{P}_{\{x, y\}}^*$ are pairwise anticomplete. We will need the following two results about blocks.

\begin{theorem} [Abrishami, Alecu, Chudnovsky, Hajebi, Spirkl \cite{tw7}]
\label{thm:clean_graphs_have_strong_blocks}
Let $G$ be a graph that does not contain $K_{t+1},K_{t,t}$, a subdivision of $W_{t\times t}$, or the line graph of a subdivision of $W_{t\times t}$. For all $k, l, t \in \poi$, there is a constant $f_{\blackref{thm:clean_graphs_have_strong_blocks}} = f_{\blackref{thm:clean_graphs_have_strong_blocks}}(k,l,t) \in \poi$ such that if $\tw(G) \ge f_{\blackref{thm:clean_graphs_have_strong_blocks}}$, then $G$ contains a strong $(k,l)$-block.
\end{theorem}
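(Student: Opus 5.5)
This statement is quoted from \cite{tw7} rather than proved here, so the following is only how I would reconstruct it. The argument has two stages.

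\textbf{Stage 1: a plain block.} First I would find a large $(k',m')$-block, with $k',m'$ much larger than $k,l$ and not yet strong. I would use Weißauer's theorem relating treewidth to blocks: large treewidth forces either a large $k'$-block or a large topological minor of bounded-degree type. The second alternative should contain a large subdivided wall. Here I need this wall as an induced subgraph of $G$, since the hypothesis only excludes subdivided walls and their line graphs as induced subgraphs. I would try to get there by passing to an induced subgraph and using $K_{t+1}$- and $K_{t,t}$-freeness, which force the graph to be locally sparse. In the first alternative, Menger's theorem then gives, for each pair $\{x,y\}\subseteq X$, a family of $m'$ internally disjoint $x\dd y$ paths. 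Each path can be shortened to be induced, which is a path in the paper's sense.

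\textbf{Stage 2: making the block strong.} Next I would make the block strong, keeping $k$ branch vertices and $l$ paths per pair. I would fix a large subset of $X$ and build the path systems one pair at a time. For each pair I would use Menger again in $G$ minus the previously chosen paths. If that separation fails, there is a small set $Z$ of vertices lying on paths chosen for earlier pairs that separates $x$ from $y$ in the remainder. By the pigeonhole principle, some vertex, or a bounded collection of vertices, then lies on paths for many different pairs.

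\textbf{Main obstacle.} I expect the hardest step to be showing that such heavy reuse cannot happen. Suppose vertices are shared by paths of many pairs. Then Ramsey's theorem (Theorem~\ref{thm:ramsey}) and the Erdős--Moser tournament theorem (Theorem~\ref{thm:tournament}), applied to the order in which paths meet the shared vertices, should extract one of the excluded structures: a large clique $K_{t+1}$, a $K_{t,t}$, or a subdivided wall or its line graph. The shared vertices act as hubs that many disjoint routes pass through, and the bounded-clique and bounded-biclique hypotheses should force these hubs into a wall-like pattern. I expect this case analysis to be the bulk of the work. If it fails, I would fall back on choosing $X$ inside a highly linked set and rerouting via a linkage argument, where the excluded walls rule out the obstructions to the rerouting.
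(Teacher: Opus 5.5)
The paper does not prove this statement; it quotes it from \cite{tw7}. Judged on its own, your reconstruction has a genuine gap in each stage.

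\textbf{Stage 1.} The block branch with Menger is fine. The other branch is not handled. When there is no large block, Wei\ss{}auer's theorem gives a tree decomposition of bounded adhesion whose torsos have bounded degree apart from a few vertices. A torso of large treewidth is not a subgraph of $G$, because adhesion sets are made into cliques. A wall found there is at best a topological minor of $G$, while the hypothesis only excludes \emph{induced} subdivided walls and their line graphs. To finish you need two things:
\begin{enumerate}[(a)]
\item an argument producing, inside $G$ itself, an induced subgraph of bounded maximum degree and large treewidth;
\item the theorem that bounded-degree graphs of large treewidth contain a large subdivided wall, or the line graph of one, as an induced subgraph (Korhonen's grid induced minor theorem).
\end{enumerate}
Neither appears in your sketch. ``$K_{t+1}$- and $K_{t,t}$-freeness force local sparsity'' does not replace them, since these hypotheses do not bound degrees at all (a star is $K_3$-free and $K_{2,2}$-free).

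\textbf{Stage 2.} This is where the real content lies, and the proposed mechanism does not work. In your greedy scheme the paths chosen so far are pairwise disjoint, so a failed Menger step cannot produce a vertex lying on paths of many different pairs. It only says that the earlier paths, which may be arbitrarily long, together with a small set meet every $x\dd y$ path. That gives no bounded configuration to feed into Theorem~\ref{thm:ramsey} or Theorem~\ref{thm:tournament}.

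More fundamentally, no argument of this local kind can work, because a large plain block does not force a strong one. Take a large grid and add vertices $x_1,\dots,x_r$ with $r\ge m$, each adjacent to its own segment of $m$ consecutive vertices of the top row. This graph is planar. Nested U-shaped paths give $m$ internally disjoint $x_i\dd x_j$ paths for all $i\neq j$, so $\{x_1,\dots,x_r\}$ is an $m$-block (and, after shortcutting the paths, an $(r,m)$-block). Yet a strong $(5,1)$-block contains a subdivision of $K_5$, so this graph has none.

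For $t\ge 3$ this graph is $K_{t+1}$-free and $K_{t,t}$-free. What it does contain is a large induced subdivided wall spread across the grid, not a few overloaded hub vertices. So the passage from blocks to strong blocks must use the exclusion of induced walls (or their line graphs) through a global argument of large treewidth and bounded degree. Saying ``the hubs should be forced into a wall-like pattern'' restates the theorem rather than proving it.
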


\begin{lemma} [Chudnovsky, Hajebi, Spirkl \cite{tw17}]
\label{lem:anti_block_has_obstructions}
For all $s, t \in \poi$, there are constants $f_{\blackref{lem:anti_block_has_obstructions}}=f_{\blackref{lem:anti_block_has_obstructions}}(s, t) \in \poi$ and $g_{\blackref{lem:anti_block_has_obstructions}}=g_{\blackref{lem:anti_block_has_obstructions}}(s) \in \poi$ with the following property. Let $G$ be a $K_{t+1}$-free graph and let $B$ be a strong anticomplete $(f_{\blackref{lem:anti_block_has_obstructions}}, g_{\blackref{lem:anti_block_has_obstructions}})$-block in $G$. Then, $G$ contains \begin{enumerate}[\rm (a)]
	\item a proper subdivision of $K_s$, or
	\item an induced $K_{s,s}$-minor.
\end{enumerate}
\end{lemma}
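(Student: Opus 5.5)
The plan is a Ramsey-type dichotomy on the interactions between paths belonging to different pairs of $X$. Only the paths matter; the branch vertices are handled separately. Write $k=f_{\blackref{lem:anti_block_has_obstructions}}$ and $m=g_{\blackref{lem:anti_block_has_obstructions}}$; both will be chosen huge at the end.

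\textbf{Step 1: normalise the block.} Since $G$ is $K_{t+1}$-free and $|X|=k$, Theorem \ref{thm:ramsey} gives a large stable subset of $X$, and we restrict the block to it. Within each $\mathcal P_{\{x,y\}}$ the interiors are pairwise anticomplete, so at most one path is the edge $xy$; discarding it, every remaining path has nonempty interior. Hence any subdivision of $K_s$ built from one path per pair is automatically proper.

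\textbf{Step 2: define the interaction data.} For each ordered pair of distinct pairs $(p,p')$ and each path $Q\in\mathcal P_p$, record how many paths of $\mathcal P_{p'}$ have an interior with a neighbour in $Q^*$. Also record whether a branch vertex $x\in X\setminus p$ has neighbours in the interiors of paths of $\mathcal P_p$. The argument then splits according to whether these counts are bounded.

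\textbf{Step 3: the sparse case gives a subdivision of $K_s$.} Suppose that for some bound $D$, for a large subset $X'\subseteq X$, every path of every pair inside $X'$ meets at most $D$ paths of every other pair inside $X'$, and every vertex of $X'$ sees at most $D$ paths of each other pair. Then we choose paths greedily, one per pair of an $s$-subset of $X'$. Each choice forbids at most $O(s^2D)$ paths in each later pair, so $m\gg s^2D$ leaves room at every step. The chosen paths have pairwise anticomplete interiors, and no branch vertex sees the interior of a path not ending at it. Together with $X'$ stable, this gives an induced proper subdivision of $K_s$, which is outcome (a).

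\textbf{Step 4: the dense case gives an induced $K_{s,s}$-minor.} Otherwise, by iterated Ramsey arguments over the pairs of $X$ (colouring pairs, or quadruples of vertices of $X$, by whether the relevant count exceeds $D$), we obtain many pairs $p_1,p_2,\dots$ that behave uniformly. In each of them some path $Q_i$ meets more than $D$ paths of a fixed pair $p_0$, or some branch vertex meets many paths of $p_0$; the branch-vertex case is handled the same way by using the vertex itself as a branch set. The paths of $\mathcal P_{p_0}$ have pairwise anticomplete interiors, so their interiors can serve as the $s$ left branch sets. The right branch sets will be connected sets each touching all $s$ chosen left paths. We obtain them by iterating pigeonhole: choose $s$ of the $Q_i$ together with $s$ paths of $p_0$ met by all of them. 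This needs $D$ large compared with $s$ and the number of $Q_i$, which costs several Ramsey iterations. The $Q_i$ belong to different pairs and are disjoint, but they may touch one another, which would break the anticompleteness required on the right side. To repair this, rerun the sparse/dense dichotomy among the $Q_i$ themselves. If many $Q_i$ are pairwise anticomplete, use them. If many touch each other heavily, this recursive density feeds back into the same argument one level down, so the process terminates with bounded depth, and $f_{\blackref{lem:anti_block_has_obstructions}}$ and $g_{\blackref{lem:anti_block_has_obstructions}}$ are chosen accordingly.

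\textbf{Main obstacle.} The hard step is Step 4: producing right branch sets that are simultaneously mutually anticomplete, disjoint from the left branch sets, and adjacent to every left branch set. The dependence of $g_{\blackref{lem:anti_block_has_obstructions}}$ on $s$ alone has to come from arranging that only the Ramsey step on $X$ uses $t$.
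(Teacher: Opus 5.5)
The paper gives no proof of this lemma; it is quoted from \cite{tw17}. So I am judging your argument on its own. Your Step 3 greedy is sound. Step 4, however, has a genuine gap, exactly where you suspect it.

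Your right branch sets $Q_i$ come from \emph{different} pairs $p_1,p_2,\dots$, and the hypotheses say nothing about edges between interiors of paths of different pairs. If the $Q_i^*$ pairwise touch, they cannot be the anticomplete side of an induced $K_{s,s}$-minor. A family of pairwise touching paths from distinct pairs is also not an instance of your dense configuration (``one path of $p$ meets more than $D$ paths of $p'$''). So the proposed recursion has nothing to act on, and no parameter decreases.

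This really happens. Suppose each pair contains one special path that meets every path of every other pair, and the special paths pairwise touch. This is realisable in a triangle-free graph, using long paths and a matching of cross edges. Then your sparse hypothesis fails for every $D<m$. The only heavy paths are the special ones, so Step 4 is forced to use them. The defect lies in the dichotomy itself: recording only whether \emph{some} path is heavy throws away the information that produces the minor.

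The missing idea is to take both sides of the $K_{s,s}$ from just two pairs, or from one pair and $X$, where anticompleteness comes for free.
\begin{itemize}
\item For pairs $p\neq p'$, the interiors of the paths of $p$ are pairwise anticomplete, and likewise for $p'$. By strongness, all these interiors are disjoint from each other and from $X$.
\item $X$ is automatically stable, since $xy\in E(G)$ would force $|\mathcal P_{\{x,y\}}|=1$. So your Step 1 is vacuous.
\item Hence, if $s$ paths of $p'$, or $s$ vertices of $X\setminus p$, each have a neighbour in the interiors of the same $s$ paths of $p$, contracting the interiors gives an induced $K_{s,s}$-minor.
\item So if $N$ such objects each meet at least $D$ paths of $p$, counting $s$-subsets gives $N\binom{D}{s}\le (s-1)\binom{m}{s}$ (K\H{o}v\'ari--S\'os--Tur\'an).
\item Take $D=\lfloor m/(2s^2)\rfloor$ and $m$ large in terms of $s$. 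Then at most $c(s)=(s-1)(4s^2)^s$ paths of $p'$, and at most $c(s)$ vertices of $X\setminus p$, are heavy for $p$.
\end{itemize}

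Now colour each triple $x<y<z$ of $X$, for a fixed order, by which of its vertices is heavy for the pair formed by the other two. Suppose a homogeneous set of size $\max\{s,c(s)+3\}$ had a heavy colour. Using its two extreme, two smallest, or two largest elements would give a pair with more than $c(s)$ heavy vertices, a contradiction. So Ramsey gives $s$ branch vertices with no heavy incidences.

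Next, delete the at most $c(s)\binom{s}{2}$ heavy paths in each of the $\binom{s}{2}$ pairs, and run your greedy. The exclusions in any pair number fewer than $c(s)\binom{s}{2}+(\binom{s}{2}+s)D<m$. No dense case is needed, and in fact $K_{t+1}$-freeness is never used.
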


We are now ready to prove Theorem \ref{thm:main_result}, which we restate.

\mainResult*

\allowdisplaybreaks
\begin{proof}

Define
\begin{align*}
	t' &= \max \{t, f_{\blackref{lem:wall_has_cycles}}(c), f_{\blackref{lem:Ktt_minor_has_cycles}}(c,t)\}\\
	k &= f_{\blackref{lem:anti_block_has_obstructions}}(t', t')\\
	l &= g_{\blackref{lem:anti_block_has_obstructions}}(t')\\
	d & = g_{\blackref{thm:banana_has_cycles}}(t')\\
	s & = f_{\blackref{lem:alignment_has_cycles}}(c,d,t')\\
	q &= f_{\blackref{thm:banana_has_cycles}}(l,s,t')\\
	f_{\blackref{thm:main_result}} &= f_{\blackref{thm:clean_graphs_have_strong_blocks}}(k,q,t')
\end{align*}
Let $G$ be a graph with $\tw(G) \ge f_{\blackref{thm:main_result}}$. Suppose that $G$ is $\{K_{t+1},K_{t,t}\}$-free and that $\cl(G) < c$. By the choice of $t'$, Lemmas \ref{lem:wall_has_cycles} and \ref{lem:Ktt_minor_has_cycles} imply that $G$ does not contain 
\begin{itemize}
	\item $K_{t'+1}$,
	\item $K_{t',t'}$,
	\item a subdivision of $W_{t'\times t'}$,
	\item the line graph of a subdivision of $W_{t'\times t'}$,
	\item a proper subdivision of $K_{t'}$, nor
	\item an induced $K_{t',t'}$-minor.
\end{itemize}
Then, by Theorem \ref{thm:clean_graphs_have_strong_blocks}, $G$ has a strong $(k,q)$-block $B=(X,\mathcal P)$. By the observations above and Lemma \ref{lem:anti_block_has_obstructions}, $G$ does not contain a strong anticomplete $(k,l)$-block. Thus, $G$ contains an $l$-dense $q$-banana $B'$. By Theorem \ref{thm:banana_has_cycles}, $G$ contains an $(s,d)$-dome $D$. Finally, by Lemma \ref{lem:alignment_has_cycles}, it follows that $\cl(G)\ge c$, which is a contradiction. This completes the proof of the theorem.

\end{proof}

\section{Analyzing bananas}
\label{sec:analyzing_bananas}

In this section, we analyze the structure of rigid bananas and prove Lemmas \ref{lem:banana_to_theta} and \ref{lem:degree_two} that will be useful for us in Section \ref{sec:banana_to_kite}. We start with a few definitions and Theorem \ref{thm:connectifier} from \cite{tw10}. 

A vertex $v$ in a graph $G$ is said to be a \textit{branch vertex} if $v$ has degree more than two. By a \textit{caterpillar} we mean a tree $C$ with maximum degree three such that there exists a path $P$ of $C$ where all branch vertices of $C$ belong to $P$. We call a maximal such path $P$ the \textit{spine} of $C$. (This definition of a caterpillar is non-standard for two reasons: a caterpillar is often allowed to be of arbitrary maximum degree, and a spine often contains all vertices of degree more than one.) By a \textit{subdivided star} we mean a graph isomorphic to a subdivision of the complete bipartite graph $K_{1, \delta}$ for some $\delta \geq 3$. In other words, a subdivided star is a tree with exactly one branch vertex, which we call its root. For a graph $H$, a vertex $v$ of $H$ is said to be \textit{simplicial} if $N_H(v)$ is a clique. We denote by $\mathcal{Z}(H)$ the set of all simplicial vertices of $H$. Note that for every tree $T$, $\mathcal{Z}(T)$ is the set of all leaves of $T$. An edge $e$ of a tree $T$ is said to be a \textit{leaf-edge} of $T$ if $e$ is incident with a leaf of $T$. It follows that if $H$ is the line graph of a tree $T$, then $\mathcal{Z}(H)$ is the set of all vertices in $H$ corresponding to the leaf-edges of $T$.

\begin{theorem} [Abrishami, Alecu, Chudnovsky, Hajebi, Spirkl \cite{tw10}] \label{thm:connectifier}
For every integer $h \geq 1$, there exists an integer $f_{\blackref{thm:connectifier}}=f_{\blackref{thm:connectifier}}\left(h\right) \geq 1$ with the following property. Let $G$ be a connected graph with no clique of cardinality $h$. Let $S \subseteq G$ such that $\left|S\right| \geq f_{\blackref{thm:connectifier}}, G \setminus S$ is connected, and every vertex of $S$ has a neighbor in $G \setminus S$. Then there is a set $S' \subseteq S$ with $|S'|=h$ and an induced subgraph $H$ of $G \setminus S$ for which one of the following holds.
\begin{enumerate}[\rm (a)]
	\item \label{connectifier_a} $H$ is a path and every vertex of $S'$ has a neighbor in $H$; or
	\item \label{connectifier_b} $H$ is a caterpillar, or the line graph of a caterpillar, or a subdivided star. Moreover, every vertex of $S'$ has a unique neighbor in $H$ and $H \cap N(S')=\mathcal{Z}\left(H\right)$.
\end{enumerate}
\end{theorem}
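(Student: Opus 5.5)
Since Theorem \ref{thm:connectifier} is imported from \cite{tw10}, I would follow the standard ``minimal connected subgraph'' strategy. Write $D=G\setminus S$. The plan is to first extract a large subset of $S$ that attaches to $D$ in a controlled way, and then analyze a minimal connected induced subgraph of $D$ that meets the neighborhoods of that subset.

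\emph{Step 1: choose a minimal connector.} Fix a large integer $N$, to be chosen as a tower of Ramsey-type bounds in terms of $h$. Since $D$ is connected and every vertex of $S$ has a neighbor in $D$, there are connected induced subgraphs $H_0\subseteq D$ such that at least $N$ vertices of $S$ have a neighbor in $H_0$. Choose such an $H_0$ with $|V(H_0)|$ minimum, and let $S_0$ be the set of vertices of $S$ with a neighbor in $H_0$. By minimality, for every non-cut vertex $v$ of $H_0$, deleting $v$ drops the number of attached vertices of $S$ below $N$. So some $s\in S_0$ has $v$ as its only neighbor in $H_0$, or close to it: $v$ is a private attachment for a bounded-size part of $S_0$.

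\emph{Step 2: reduce to a tree-like structure.} Since $G$ has no $K_h$, Theorem \ref{thm:ramsey} bounds every clique of $H_0$, and hence every clique of its block structure. Minimality forces each block of $H_0$ either to be a single edge or to be tightly constrained: a $2$-connected block with a vertex that is not a cut vertex and carries no private attachment could be shrunk. The first goal is therefore to show that every block of $H_0$ is an edge or a clique of size at most $h-1$, so that $H_0$ is a tree or a ``tree of small cliques''. The latter is the line graph of a tree after discarding a bounded amount. I expect this to be the main obstacle. Controlling long induced cycles inside a minimal connector needs a careful exchange argument: one reroutes around the cycle to contradict minimality. I would try to show that any induced cycle of length at least $4$ in $H_0$ yields a vertex whose removal keeps $N$ attachments.

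\emph{Step 3: degree dichotomy.} Suppose $H_0$ has a vertex of degree, or clique-degree, at least a large number $M$. Then I would pass to $M$ branches hanging at that vertex, each containing a private attachment, and shortcut each branch to an induced path ending at a neighbor of a distinct $s\in S'$. This gives a subdivided star in which the members of $S'$ attach only at leaves.

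Otherwise, all degrees are bounded, so the tree-like $H_0$, which has at least $N$ leaves carrying attachments, contains a long path $P$ such that many branches leave $P$ and each branch contains an attachment. Pruning each branch to a single path ending at a neighbor of its own $s\in S'$ gives a caterpillar with spine $P$. In the clique-tree case it gives the line graph of a caterpillar.

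\emph{Step 4: uniqueness of neighbors.} In Step 3, a vertex of $S'$ may have several neighbors in $H$. I would first apply Ramsey (Theorem \ref{thm:ramsey}) to $S'$, using $K_h$-freeness to make $S'$ stable. Next, I would argue by minimality of $H_0$ that if many vertices of $S'$ have many neighbors spread along the structure, then some single induced path $H$ sees $h$ of them; this is outcome (a). If instead each vertex has only its private attachment, then taking $H$ to be the pruned caterpillar, line graph of a caterpillar, or subdivided star gives $H\cap N(S')=\mathcal Z(H)$, with a unique neighbor for each $s\in S'$; this is outcome (b). Choosing $N$ large enough that all the Ramsey and degree thresholds leave $h$ survivors defines $f_{\blackref{thm:connectifier}}(h)$.
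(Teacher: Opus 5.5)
The paper does not prove this statement; it is imported from \cite{tw10}. Your sketch therefore has to stand on its own, and it breaks at Step~2.

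\textbf{The gap in Step 2.} Minimality of $H_0$ only excludes a vertex that is neither a cut vertex of $H_0$ nor the sole neighbor in $H_0$ of some vertex of $S_0$. It does not exclude a large $2$-connected block all of whose vertices are cut vertices of $H_0$.

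For instance, let $N\ge 4$ and let $D=G\setminus S$ be an induced cycle $c_1c_2\cdots c_Nc_1$ with a pendant vertex $e_i$ attached to each $c_i$. Let $S=\{s_1,\dots,s_N\}$ with $N(s_i)=\{e_i\}$. Then:
\begin{itemize}
\item $G$ is connected and triangle-free, and $D$ is connected.
\item Every connected induced subgraph of $D$ meeting the neighborhoods of $N$ vertices of $S$ must contain every $e_i$, and hence every $c_i$. So $H_0=D$, which has a block that is an induced cycle of length $N$.
\item No vertex can be deleted from $H_0$ while keeping it connected with $N$ attachments.
\end{itemize}
Any $2$-connected graph with a pendant vertex hung at each of its vertices behaves the same way, so the blocks of $H_0$ can be arbitrary. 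The exchange argument you hope for in Step~2 therefore does not exist. Steps~3 and~4, which use branches hanging off a vertex and a long path with tree-branches leaving it, then have nothing to act on.

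\textbf{The difficulty is intrinsic.} The example also shows this is not an artifact of how $H_0$ is chosen. Each $e_i$ has degree one in $D$, so no induced path of $D$ meets the neighborhoods of three vertices of $S$. Since $D$ is triangle-free with maximum degree three, for $h\ge 4$ the only possible outcome is a caterpillar whose spine is a subpath of the cycle. That is, the required structure lives inside a large $2$-connected block.

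\textbf{What is missing.} You need an argument for arbitrary connectors.
\begin{itemize}
\item In the bounded-degree case, you must extract a long induced path inside $H_0$. A shortest path from a fixed root works, since bounded degree and many vertices force large radius.
\item You must then prove one of two things. Either $h$ attached vertices of $S$ see this path directly, giving (a). Or $h$ of them are reached by pairwise anticomplete legs that each meet the spine in a controlled way, giving a caterpillar or the line graph of one.
\item In the high-degree case, $H_0\setminus v$ can be connected. For example, add to the example above a vertex $v$ complete to the cycle, together with a new vertex of $S$ adjacent only to $v$. So the star must be built from a stable set of neighbors of $v$ together with separated legs, not from components of $H_0\setminus v$.
\end{itemize}
Your Step~4 (``argue by minimality \dots some single induced path sees $h$ of them'') does not supply this leg-separation argument, and that argument is the real content of the theorem.
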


We will now apply this theorem to show how to carve a theta from a rigid banana.
First, we do it with the additional assumption that all the neighbors of one of the ends of the banana have degree two.

\begin{lemma} \label{lem:banana_to_theta}
Let $q,s,t \in \poi$ and let $G$ be a $t$-tidy graph. There are constants $f_{\blackref{lem:banana_to_theta}} = f_{\blackref{lem:banana_to_theta}}(q,s,t)$ and $g_{\blackref{lem:banana_to_theta}} = g_{\blackref{lem:banana_to_theta}}(q)$ such that if $G$ contains a rigid $f_{\blackref{lem:banana_to_theta}}$-banana $B=(\{x,z\},\mathcal P)$ such that each vertex in $N(x)\cap V(B)$ has degree two in $B$, then one of the following holds.
\begin{enumerate}[\rm (a)]
	\item $G\setminus \{z\}$ contains a $q$-theta $(\{x,y\},\mathcal Q)$.
	\item $G$ contains an $(s,g_{\blackref{lem:banana_to_theta}})$-dome with root $x$.
\end{enumerate}
\end{lemma}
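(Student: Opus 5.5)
Let $\mathcal P=\{P_1,\dots,P_m\}$ with $m=f_{\ref{lem:banana_to_theta}}$. For each $i$ let $a_i$ be the neighbor of $x$ on $P_i$. By hypothesis $a_i$ has degree two in $B$, so its only neighbors in $V(B)$ are $x$ and its successor on $P_i$. In particular the $a_i$ form a stable set, and $x$ has no other neighbors in $V(B)$. The plan is to apply Theorem~\ref{thm:connectifier} to the graph $G'=G[V(B)\setminus\{x\}]$ with $S=\{a_1,\dots,a_m\}$.

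The hypotheses of Theorem~\ref{thm:connectifier} hold. The graph $G'\setminus S$ is the union of the paths $P_i\setminus\{x,a_i\}$, which all contain $z$, so it is connected. Each $a_i$ has a neighbor in it, namely its successor on $P_i$; if $P_i$ has length two this successor is $z$, and at most one path is this short. Finally, $G$ has no $K_{t+1}$. So with $h$ chosen large, Theorem~\ref{thm:connectifier} gives $S'\subseteq S$ with $|S'|=h$ and an induced subgraph $H\subseteq G'\setminus S$ of one of the listed types.

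The strategy in both outcomes is to remove $z$ and look inside $H$.

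In outcome (a), $H$ is a path and every $a_i\in S'$ has a neighbor in $H$. If $z\notin H$, then $x$ together with the paths $x\dd a_i$ forms a subdivided star with leaves $S'$, and with $H$ it forms a dome. Its interior $\{x\}$ is anticomplete to $H$ because $H\subseteq V(B)\setminus(\{x\}\cup S)$. The remaining difficulty is the degree bound $d$: each vertex of $H$ has at most $2$ neighbors among the $a_i$'s, since each $a_i$ has only one neighbor off $x$. So this is an $(h,2)$-dome with root $x$, giving (b) with $g=2$ (or larger). If $z\in H$, split $H$ at $z$ and keep the half adjacent to at least $(h-1)/2$ of the $a_i$'s.

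In outcome (b), each $a_i\in S'$ has a unique neighbor in $H$, and these neighbors are exactly the simplicial vertices of $H$. If $H$ is a subdivided star with root $y$, then the paths $x\dd a_i\dd$ (leaf) $\dd y$ are internally disjoint and pairwise anticomplete. Indeed, the $a_i$ are stable, only $x$ is adjacent to them, and the legs of the star are anticomplete away from $y$. If $z$ avoids these paths, choosing $q$ of them yields a $q$-theta with ends $x,y$ in $G\setminus\{z\}$, which is outcome (a). Since $z$ lies on at most one leg, we discard that leg, so $h\ge q+2$ suffices. If $H$ is a caterpillar or the line graph of a caterpillar, take a long subpath $Q$ of the spine avoiding $z$, or its line-graph analogue. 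Then $x$, the $a_i$, and the pendant legs from the spine to the leaves form a subdivided star, and $Q$ serves as $P$. The trouble is that the legs must be anticomplete to $Q$ except at their ends. So the star should consist of $x$, $a_i$, and the leaf; the leg stays with the spine side, and we shorten the spine path so that it meets each leaf-leg only at its attachment region.

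I expect this caterpillar case to be the main obstacle. The cleanest fix is to let $P$ be the spine path extended through the legs, and let the star be $x$ joined to each $a_i$ directly, with leaves $a_i$. Then the only neighbor of $a_i$ in $P$ is its leaf, each vertex of $P$ is adjacent to at most one $a_i$, and we get an $(s,1)$-dome once $h\ge s+1$ (to drop a leg containing $z$). In the line-graph case the same idea applies, with $P$ a path through the line graph. This gives $g_{\ref{lem:banana_to_theta}}=2$ and $f=f_{\ref{thm:connectifier}}(\max\{2s+2,q+2,t+1\})$.
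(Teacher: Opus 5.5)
\textbf{There is a genuine gap: your argument never uses that $B$ is rigid, and the lemma is false without that hypothesis.}

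For a counterexample, let $G$ itself be a theta $\Theta_{x,z}$ whose $m$ paths $x\dd a_i\dd b_i\dd z$ all have length three.
\begin{itemize}
\item $G$ is $3$-tidy: it is triangle-free and has treewidth two.
\item Each $a_i$ has degree two in $B$.
\item $G\setminus\{z\}$ is a tree, so it contains no $2$-theta.
\item $G\setminus\{x\}$ is a star centred at $z$. A short check then shows that $G$ has no $(s,g)$-dome with root $x$ once $s>g+4$.
\end{itemize}

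Your proof breaks exactly where this example says it must. Because you keep $z$ in $G'$, the graph $G'\setminus S$ can be a subdivided star rooted at $z$, as it is here. In that case the only possible output of Theorem~\ref{thm:connectifier} (for $h\ge 4$) is a subdivided star $H$ with root $y=z$. The resulting theta has ends $x$ and $z$, so it is not in $G\setminus\{z\}$. Your remark that ``$z$ lies on at most one leg'' fails here, since the root lies on every leg.

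\textbf{The repair is the one the paper uses.} Apply Theorem~\ref{thm:connectifier} to $G[V(B^*)]$ with $S=N(x)\cap V(B)$, so that $z\notin H$ from the start.
\begin{itemize}
\item Connectivity of $G[V(B^*)\setminus S]$ must now come from rigidity. Since each $a_i$ has degree two in $B$, every edge witnessing that $P_i^*$ and $P_j^*$ are not anticomplete joins $P_i^*\setminus\{a_i\}$ to $P_j^*\setminus\{a_j\}$.
\item The same observation shows that no path of the banana has length two. So each $a_i$ keeps a neighbor in $V(B^*)\setminus S$. (Your claim that at most one path can have length two is false for general bananas, but this makes it moot.)
\item All your ``$z\in H$'' subcases disappear, and the path and subdivided-star cases go through as you wrote them.
\end{itemize}

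\textbf{Separately, your final ``cleanest fix'' for the caterpillar case is wrong.} A path in a tree contains at most two of its leaves, so no path $P$ in $H$ can contain the neighbors of $s$ different $a_i$.

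Your first construction was the correct one. The worry that made you abandon it is unfounded: $H$ is an induced tree, so each pendant leg meets the spine only through the edge at its attachment vertex. The construction is:
\begin{itemize}
\item Take the star to consist of $x$, the $a_i$, and the legs of the caterpillar without their spine vertices. The dome leaves are the leg vertices adjacent to the spine.
\item Let $P$ be the spine minus its two end leaves, discarding the two $a_i$ attached there.
\item Each vertex of $P$ has degree at most three in $H$, so it sees at most one dome leaf.
\item In the line-graph case, take $P$ to be the path of spine edges; each vertex of $P$ then sees at most two dome leaves.
\end{itemize}
This gives $g=2$.
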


\begin{proof}

Define $g_{\blackref{lem:banana_to_theta}} = \max \{q,3\},h = \max\{q,s,t\}, f_{\blackref{lem:banana_to_theta}} = f _{\blackref{thm:connectifier}}(h)$.

Recall that $G$ is $t$-tidy and observe that $S=N(x)\cap V(B)$ is a stable set in $B$. Also, observe that $G'= G[V(B^*) \setminus S]$ is connected (since $B$ is rigid and each vertex in $S$ has degree 1 in $G[V(B^*)]$) and that each vertex in $S$ has a neighbor in $G'$. Applying Theorem \ref{thm:connectifier}, we obtain a set $S'\subseteq S$ of size $h$ and an induced subgraph $H$ of $G'$ for which one of the following holds.
\begin{enumerate}[\rm (a)]
	\item $H$ is a path and every vertex of $S'$ has a neighbor in $H$; or
	\item $H$ is a caterpillar, or the line graph of a caterpillar, or a subdivided star. Moreover, every vertex of $S'$ has a unique neighbor in $H$ and $H \cap N(S')=\mathcal{Z}\left(H\right)$.
\end{enumerate}

Suppose that case \blackref{connectifier_a} holds. If some vertex $y$ in $H$ is adjacent to at least $q$ vertices of $S'$, then $x,y$ and $S' \cap N(y)$ form a $q$-theta as claimed. Otherwise, $\tilde G =G[\{x\} \cup S' \cup V(H)]$ contains an $(s,g_{\blackref{lem:banana_to_theta}})$-dome with root $x$ as claimed.

Thus, we may assume that case \blackref{connectifier_b} holds. Note that $h\ge q$. Suppose that $H$ is a subdivided star with root $r$. Then, $\tilde G$ contains a $q$-theta $\Theta_{x,r}$ as claimed. Since $V(H) \subseteq V(B^*)$, it's clear that $\Theta_{x,r}$ does not contain $z$. 

Therefore, suppose that $H$ is a caterpillar or the line graph of a caterpillar. Then, observe that each vertex in $H$ has degree at most three. Thus, $\tilde G$ contains an $(s,g_{\blackref{lem:banana_to_theta}})$-dome with root $x$ as claimed.
	
\end{proof}

Next, we justify the additional assumption of Lemma \ref{lem:banana_to_theta}.

\begin{lemma} \label{lem:degree_two}
Let $q,s,t \in \poi$ and let $G$ be a $t$-tidy graph. There are constants $f_{\blackref{lem:degree_two}} = f_{\blackref{lem:degree_two}}(q,s,t)$ and $g_{\blackref{lem:degree_two}} = g_{\blackref{lem:degree_two}}(t)$ such that if $G$ contains a rigid $f_{\blackref{lem:degree_two}}$-banana $B=(\{x,z\},\mathcal P)$, then one of the following holds.
\begin{enumerate}[\rm (a)]
	\item $G$ contains a rigid $q$-banana $B'=(\{x,z\},\mathcal P')$ such that each vertex in $N(x)\cap V(B')$ has degree two in $G[V(B')]$.
	\item $G$ contains an $(s,g_{\blackref{lem:degree_two}})$-dome with root $x$.
\end{enumerate}
\end{lemma}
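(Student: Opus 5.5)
Write $\mathcal P=\{P_1,\dots,P_f\}$ with $f=f_{\blackref{lem:degree_two}}$. For each $i$, let $a_i$ be the neighbor of $x$ on $P_i$. The vertices $a_i$ are pairwise distinct because the paths are internally disjoint. My plan is to shorten each path at the \emph{last} neighbor of $x$ on it. Concretely, let $a_i'$ be the vertex of $P_i^*$ that is adjacent to $x$ and closest to $z$ along $P_i$, and replace $P_i$ by $P_i'=x\dd a_i'\dd P_i\dd z$. This is an induced path because $P_i$ is induced and $x$ has no neighbor in $a_i'\dd P_i\dd z$ other than $a_i'$.

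The remaining problem is that $a_i'$ may still have neighbors in other paths $P_j'^*$, so its degree in the new banana exceeds two. Define an auxiliary graph $J$ on $\poi_f$ in which $i$ and $j$ are adjacent if $a_i'$ has a neighbor in $P_j'^*$ or $a_j'$ has a neighbor in $P_i'^*$. I then split into cases according to degrees in $J$.

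\textbf{Case 1: $J$ has a large stable set $I$.} Then the subfamily $\{P_i'\}_{i\in I}$ has every vertex of $N(x)$ of degree two within it. Choosing $|I|\ge q$ appropriately, I take $q$ of these paths. I still need rigidity, and it must be recovered, because rigidity of the original banana concerned the full paths $P_i$. I would restore it via Ramsey (Theorem \ref{thm:ramsey}) on the graph ``$P_i'^*$ not anticomplete to $P_j'^*$''. A clique of size $q$ gives outcome (a). A stable set means the paths $P_i'^*$ are pairwise anticomplete, so we have a large theta on $x,z$. That theta is $\{K_{t+1},K_{t,t}\}$-free, and from it I would aim to extract a dome with root $x$. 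This is done by taking a large theta, deleting $z$, and applying Theorem \ref{thm:connectifier} to the neighbors of $x$ in the remaining connected part together with another path through the former structure, which yields a subdivided star rooted at $x$ whose leaves attach to a path. I expect this rigidity-recovery step to need more care; possibly the right move is to shortcut at the first neighbor of $x$ instead, so that interior adjacencies are inherited.

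\textbf{Case 2: $J$ has no large stable set.} Since the edges of $J$ witness adjacency between $a_i'$ and other paths, a vertex of large degree in $J$ means that many distinct paths attach to one vertex or to its path. I would orient each edge of $J$ from $i$ to $j$ when $a_i'$ sees $P_j'^*$, and use Theorem \ref{thm:tournament} on a large clique. By Ramsey, a large clique exists because $G$ is $K_{t+1}$-free on the $a_i'$'s while $J$ is dense. This gives a transitive subfamily $i_1<\dots<i_m$ in which each $a_{i_r}'$ sees $P_{i_{r'}}'^*$ for all $r'>r$. Take $L$ to be the set $\{a_{i_r}'\}$ for $r<m$, the star $S$ to be $x$ with these leaves (edges $xa'$), and the path $P$ to be $P_{i_m}'^*$. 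Every leaf has a neighbor in $P$, and the interior of $S$ is just $x$, which is anticomplete to $P$ by the choice of $a_{i_m}'$ as the last neighbor. The bound $d$ on how many leaves a vertex of $P$ sees comes from $K_{t,t}$-freeness and $K_{t+1}$-freeness: if some $u\in P$ saw $g$ leaves, then, since the leaves form a nearly stable set by Ramsey, we get $K_{t,t}$ structure around $x$ and $u$ with another vertex. So I set $g_{\blackref{lem:degree_two}}$ to be a Ramsey-type function of $t$. This yields outcome (b) with $s$ leaves, provided $m>s$.

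The main obstacle is Case 1: ensuring that the truncated paths remain rigid and not merely internally disjoint. I would resolve it with the Ramsey dichotomy described above, routing the anticomplete (theta) alternative into a dome via Theorem \ref{thm:connectifier}.
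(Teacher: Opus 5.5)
\textbf{The gap: the bound on leaves per path vertex in Case 2.} Your dome uses a single sink path $(P'_{i_m})^*$. You argue that a vertex $u$ on it adjacent to $g$ leaves yields a $K_{t,t}$ ``around $x$ and $u$ with another vertex''. This is false. The vertices $x$, $u$ and $g$ pairwise nonadjacent common neighbours induce only $K_{2,g}$, which has treewidth two and is therefore $t$-tidy for every $t\ge 3$. Nothing about one path rules out a single vertex seeing all the leaves. This bound is the heart of the lemma, and it is why the dome parameter depends only on $t$.

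The paper's fix is to split the transitive tournament into $s$ sources $a_1,\dots,a_s$ and $r=\binom{s}{d}d$ sinks $b_1,\dots,b_r$, with $d=t^t$. Then every $x_{a_i}$ sees every $P_{b_j}^*$. If some $P_{b_j}^*$ has every vertex adjacent to at most $d$ of the $x_{a_i}$, that path gives the dome. Otherwise each $P_{b_j}^*$ contains a vertex $v_j$ with at least $d$ neighbours among the $x_{a_i}$. These $v_j$ lie on distinct paths, so pigeonholing over the $\binom{s}{d}$ possible $d$-sets gives $d$ of them complete to a common $d$-set of leaves. Ramsey with $K_{t+1}$-freeness then extracts stable $t$-subsets on both sides, giving an induced $K_{t,t}$. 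You need many sink paths seen by the same leaves; one path cannot work.

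\textbf{Smaller points.}

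First, a path in $G$ is by definition induced. So $x$ has exactly one neighbour $a_i$ on each $P_i$, and $xz\notin E(G)$ once $f\ge 2$. Your truncation is therefore vacuous and $P_i'=P_i$. Since rigidity is a pairwise condition, every subfamily of a rigid banana is rigid. Hence a stable $q$-set in $J$ gives (a) immediately, and the whole rigidity-recovery/theta/connectifier detour in Case 1 should be dropped. You never justified that branch, and it never occurs.

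Second, in Case 2 the root $x$ is not anticomplete to $(P'_{i_m})^*$, because $a_{i_m}'$ lies in it; the ``last neighbour'' remark does not help. You must delete $a_{i_m}'$ from the path, which costs at most $d$ leaves once the degree bound is in place.

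Third, you must pass to a stable set of leaves (Ramsey, using $K_{t+1}$-freeness) so that $\{x\}\cup L$ induces a star. These last two are routine adjustments of the constants.

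Finally, the large clique in $J$ comes from Ramsey applied to $J$ itself, not from $K_{t+1}$-freeness of $G$.
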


\begin{proof}
	
	Define
	\allowdisplaybreaks
	\begin{align*}
		d &= g_{\blackref{lem:degree_two}} =t^t\\
		r &= {s\choose d}d\\
		w &= 2^{s+r-1}\\
		f &= f_{\blackref{lem:degree_two}} = q^{w-1}
	\end{align*}
	
	Suppose that $G$ does not contain an $(s,g_{\blackref{lem:degree_two}})$-dome with root $x$. Let $\mathcal P =\{P_1,\dots,P_f\}$ and let $x_i = N(x) \cap P_i$ for $i\in \poi_f$. Define a directed graph $H$ with vertex set $\poi_f$ where $(i,j)\in E(H)$ if and only if $x_i$ has a neighbor in $P_j^*$. First, we claim that
		
	\sta{\label{sta:degree_two_clique} $H^-$ does not contain a clique of size $w$.}
	
	Suppose that $C$ is a clique in $H^-$ of size $w$. Then, $H[C]$ contains a tournament on $w$ vertices. By Theorem \ref{thm:tournament}, $H$ contains a transitive tournament on $s+r$ vertices, say $\{a_1,\dots,a_s,b_1,\dots,b_r\}$ where $(a_i,b_j)$ is an edge for all $i\in \poi_s,j\in \poi_r$. Recall that this means that $x_{a_i}$ has a neighbor in $P_{b_j}^*$. Suppose that there exists a $j \in \poi_r$ such that each vertex of $P_{b_j}^*$ has at most $d$ neighbors in $X=\{x_{a_1},\dots,x_{a_s}\}$. Note that $G[\{x\}\cup X]$ is a star with root $x$ and that each vertex in $X$ has a neighbor in $P_{b_j}^*$. Thus, $G$ contains an $(s,g_{\blackref{lem:degree_two}})$-dome with root $x$, which is a contradiction.
	
	Therefore, for every $j \in \poi_r$, $P_{b_j}^*$ contains a vertex $v_j$ that has at least $d$ neighbors in $X$. Since $r = {s\choose d}d$, there exist sets $X' \subseteq X,V'\subseteq \{v_1,\dots,v_r\}$ such that $|X'|=|V'|=d$ and $xv$ is an edge for all $x\in X',v\in V'$. Finally, recall that $G$ is $t$-tidy. Thus, both $X'$ and $V'$ contain stable sets $X'',V''$ of size $t$, respectively. However, $G[X''\cup V'']$ is isomorphic to $K_{t,t}$, which is a contradiction. This completes the proof of \eqref{sta:degree_two_clique}.
	
	\
	
	Theorem \ref{thm:ramsey} and (\ref{sta:degree_two_clique}) imply that $H^-$ contains a stable set $S$ of size $q$, say $S=\{s_1,\dots,s_q\}$. Let $B'=(\{x,z\},\mathcal P')$ where $\mathcal P'=\{P_{s_1},\dots,P_{s_q}\}$. Note that for all distinct $i,j\in \poi_q$, $x_{s_i}$ does not have a neighbor in $P_{s_j}^*$. Thus, $x_{s_i}$ has degree two in $B'$ as required.
	
\end{proof}

\section{From a banana to a kite}
\label{sec:banana_to_kite}

In this section, we prove Lemma \ref{lem:banana_to_kite} that shows how to obtain a kite from a rigid dense banana and then show how to make this kite clean in Theorem \ref{thm:banana_to_clean_kite}.

\begin{lemma} \label{lem:banana_to_kite}
For all $k,l,q,s,t \in \poi$, there are constants $f_{\blackref{lem:banana_to_kite}} = f_{\blackref{lem:banana_to_kite}}(k,l,q,s,t)$ and $g_{\blackref{lem:banana_to_kite}} = g_{\blackref{lem:banana_to_kite}}(q,t)$ with the following property. Let $G$ be a $t$-tidy graph that contains a rigid $l$-dense $f_{\blackref{lem:banana_to_kite}}$-banana $\hat B_{x,z}$. Then, $G$ contains an $(x,z)$-tailed $(k,q)$-kite or an $(s,g_{\blackref{lem:banana_to_kite}})$-dome with root $x$.
\end{lemma}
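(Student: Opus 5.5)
We build the $k$ tailed thetas one at a time. Each step peels a bounded number of paths off the banana and leaves a rigid banana on the remaining paths that is still large.

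\emph{Producing one tailed theta.} Start from the rigid $l$-dense $f$-banana $\hat B_{x,z}$. Apply Lemma \ref{lem:degree_two} with parameters $(q_1,s,t)$ for a suitable $q_1 = f_{\blackref{lem:banana_to_theta}}(q,s,t)$. Either we get an $(s,g_{\blackref{lem:degree_two}}(t))$-dome with root $x$, and we are done, or we get a rigid $q_1$-banana $B'$ on a subfamily of the paths, in which every vertex of $N(x)\cap V(B')$ has degree two. Then Lemma \ref{lem:banana_to_theta} applied to $B'$ gives either a dome with root $x$ and degree bound $g_{\blackref{lem:banana_to_theta}}(q)$, or a $q$-theta $\Theta_{x,y}$ in $G[V(B')]\setminus\{z\}$. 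We set $g_{\blackref{lem:banana_to_kite}}(q,t)=\max\{q,3,t^t\}$; a dome with a smaller degree bound is also a dome with this bound.

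\emph{Adding the tail.} Take a minimal connected piece of $V(B')\setminus V(\Theta_{x,y})$ that joins $z$ to a neighbor of $\Theta_{x,y}\setminus\{x\}$. Choose a shortest path $R$ from $z$ in $G[V(B')\setminus\{x\}]$ to the set $N(\Theta_{x,y}\setminus\{x\})$, and then reroute. If the endpoint of $R$ attaches to $\Theta_{x,y}$ at a vertex $y'\neq y$, we replace $y$ by $y'$ and shorten the theta paths accordingly. This keeps at least a constant fraction of the theta paths: the attachment point lies on one path, so we lose at most one path, provided we start from a $(q+1)$-theta. If the attachment has several neighbors on the theta, then $l$-density together with rigidity should let us either pick a single attachment vertex or find a theta with ends $x,z$. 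The latter is forbidden if we keep $q\ge l$, which we may assume by raising $q$, since a $(k,q')$-kite with $q'\ge q$ contains a $(k,q)$-kite. This yields an $(x,z)$-tailed $q$-theta $\Theta_{x,y_1,z}$ that uses vertices from only a bounded number of paths of $\hat B$. The key point is that Lemmas \ref{lem:degree_two} and \ref{lem:banana_to_theta} operate on a subbanana of size $f_{\blackref{lem:degree_two}}(q_1,s,t)$, which is a constant $N$ independent of $f$.

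\emph{Iterating.} Set $f_{\blackref{lem:banana_to_kite}} = k\cdot N$ (plus slack). Partition the paths of $\hat B$ into $k$ groups of $N$ paths each. Each group is itself a rigid $l$-dense $N$-banana with ends $x,z$, since both properties are hereditary to subfamilies. Run the above in each group, obtaining tailed thetas $\Theta_{x,y_i,z}$ with $V(\Theta_{x,y_i,z})\subseteq\{x,z\}\cup V(\text{group } i)$. Distinct groups consist of internally disjoint paths, so these tailed thetas pairwise meet only in $\{x,z\}$, which is exactly the definition of an $(x,z)$-tailed $(k,q)$-kite. Note that the kite definition imposes no anticompleteness between different tailed thetas; that is deferred to the cleaning step.

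The main obstacle is the tail step. The tail must be a $y\dd z$ path whose interior (together with $z$) is anticomplete to $V(\Theta_{x,y})\setminus\{y\}$, and edges between the tail region and internal theta vertices may force rerouting. My first attempt would be to take $R$ shortest, let $y$ be the neighbor of its last vertex, and cut every theta path to end at the first vertex (from $x$) that has a neighbor in $R$. Then I would use $l$-density (no $l$-theta with ends $x,z$) to show that at most $l$ of the theta paths are cut at distinct places in an incompatible way, so that we start with $q+l$ paths and discard the bad ones.
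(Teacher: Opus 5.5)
\textbf{There is a genuine gap: the tail step is not established.} Your outer structure matches the paper's. The paper applies Lemma \ref{lem:degree_two} once and then splits the resulting rigid $pk$-banana into $k$ groups, whereas you split first; either way, tailed thetas built inside disjoint groups form a kite. The problem is the tail step, which you leave open, and the mechanism you sketch would not close it. First, ``replace $y$ by $y'$'' does not give a theta. If the last vertex $r$ of $R$ attaches at an interior vertex $y'$ of one theta path $Q_j$, the other theta paths do not pass through $y'$. You must instead keep $y$, discard $Q_j$, and run the tail from $y$ along $Q_j$ to a neighbor of $r$, then along $R$ to $z$. Second, and more importantly, when $r\neq z$ sees the interiors of many theta paths, there is nothing for $l$-density or rigidity to contradict. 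What you get is a theta with ends $x,r$, not $x,z$, and that is exactly what you want. Your fallback of cutting each theta path at its first vertex with a neighbor in $R$ also fails: it yields paths with different ends, not a theta, and $l$-density says nothing about how interior vertices of $R$ attach.

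\textbf{The missing idea is a pigeonhole dichotomy on the attachment vertex.} Start from a $2q$-theta. If $r$ has neighbors in at least $q$ theta interiors, re-center at $r$: the $x\dd r$ paths through those theta paths form a $q$-theta with ends $x,r$, and $R$ is its tail, since the other vertices of $R$ have no neighbor in $\Theta_{x,y}\setminus\{x\}$. Otherwise at least $q$ theta paths have interiors anticomplete to $r$. Keep those paths, and route the tail from $y$ to $r$ through one of the discarded paths, or directly if $r$ is adjacent to $y$. In both cases the degree-two property from Lemma \ref{lem:degree_two} keeps the tail off $N(x)$: a neighbor of $x$ in $B'$ has no neighbor in $V(B')$ other than $x$ and its successor. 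The only role of $l$-density is the case where the attachment vertex is $z$ itself. There, $z$ cannot see $l$ theta interiors, since that would give an $l$-theta with ends $x,z$. So the paper requests a $(2q+l)$-theta and first deletes the fewer than $l$ paths whose interiors see $z$. It then takes the attachment vertex to be the first vertex, from $z$, of the banana path through $y$ that has a neighbor in $\Theta^*$, and runs exactly this dichotomy.

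\textbf{A secondary bookkeeping point, which the paper's own constants share.} The dome alternative of Lemma \ref{lem:banana_to_theta} has degree bound $\max\{q',3\}$, where $q'$ is the theta size you request. Requesting $q+1$, $q+l$, or $\max\{q,l\}$ paths therefore makes the degree bound depend on $l$. So $\max\{q,3,t^t\}$ is not what your construction actually delivers.
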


\begin{proof}

Define 
\begin{align*}
	p &= f_{\blackref{lem:banana_to_theta}}(2q+l,s,t)\\
	f_{\blackref{lem:banana_to_kite}} &= f_{\blackref{lem:degree_two}}(pk,s,t)\\
	g_{\blackref{lem:banana_to_kite}} &= \max \{g_{\blackref{lem:banana_to_theta}}(q), g_{\blackref{lem:degree_two}}(t)\}
\end{align*}

Suppose that $G$ does not contain an $(s,g_{\blackref{lem:banana_to_kite}})$-dome with root $x$. Lemma \ref{lem:degree_two} implies that $G$ contains a rigid $pk$-banana $B=(\{x,z\},\mathcal P)$ such that every vertex in $N(x)\cap V(B)$ has degree two in $B$. Partition $\mathcal P$ into $k$ sets $\mathcal P_1,\dots,\mathcal P_k$ of size $p$ each. For $i\in \poi_k$, let $B_i=(\{x,z\},\mathcal P_i)$ be the resulting banana. Apply Lemma \ref{lem:banana_to_theta} to each $B_i$ to obtain a $(2q+l)$-theta $\hat \Theta_i =(\{x,y_i\}, \hat {\mathcal Q}_i)$. Note that $z$ can have neighbors in at most $l$ paths of $(\hat {\mathcal Q}_i)^*$ as $\hat B$ is $l$-dense. Thus, we can remove from $(\hat {\mathcal Q}_i)^*$ the paths that are not anticomplete to $z$ to obtain a $2q$-theta $\Theta_i =(\{x,y_i\}, {\mathcal Q}_i)$ such that $z$ is anticomplete to $( {\mathcal Q}_i)^*$. We claim that

\sta{\label{sta:can_find_a_tail} For $i\in \poi_k$, $B_i$ contains an $(x,z)$-tailed $q$-theta.}

Let $P_i$ be the path in $\mathcal P_i$ that contains $y_i$. Let $v_i$ be the first vertex of $P_i$ with a neighbor in $\Theta_i^*$, traversing $P_i$ from $z$ to $x$. Such a vertex $v_i$ exists because $y_i$ is one possible candidate. Note that $v_i\neq z$ since $z$ is anticomplete to $( {\mathcal Q}_i)^*$. Suppose that $v_i$ has a neighbor in at least $q$ paths of $\Theta_i^*$, say $Q_{i,1},\dots,Q_{i,q}$. For $j\in \poi_q$, let $Q_{i,j}'$ be a path from $x$ to $v_i$ with interior in $Q_{i,j}$. Then, $x \dd Q_{i,j}' \dd v_i \dd P_i \dd z$ is an $x\dd z$ path for every $j\in \poi_q$ and thus they form an $(x,z)$-tailed $q$-theta.

Thus, we may assume that there is a set of $q$ paths $\mathcal Q_i' \subseteq \mathcal Q_i$ such that $v_i$ is anticomplete to $(\mathcal Q_i')^*$. It follows that $v_i\neq y_i$. Let $Q_i$ be some path in $\mathcal Q_i$ such that $v_i$ has a neighbor in $Q_i^*$. Such a path exists by the definition of $v_i$. Let $Q_i'$ be a path from $y_i$ to $v_i$ with interior in $Q_i$. Note that $Q_i'$ is anticomplete to $x$ (since each vertex in $N(x)$ has degree two in $B_i$ and one of its neighbors is $x$). Also, note that $Q_i'\setminus \{y_i\}$ is anticomplete to $(\mathcal Q_i')^*$ (since paths in $\mathcal Q^*$ are anticomplete as they are part of a theta). Thus, $\mathcal Q_i'$ together with the tail $y_i \dd Q_i' \dd v_i \dd P_i \dd z$ form an $(x,z)$-tailed $q$-theta as required.

\end{proof}

We are now ready to prove the main result of this section.

\begin{theorem} \label{thm:banana_to_clean_kite}
For all $k,l,q,s,t \in \poi$, there are constants $f_{\blackref{thm:banana_to_clean_kite}} = f_{\blackref{thm:banana_to_clean_kite}}(k,l,q,s,t)$ and $g_{\blackref{thm:banana_to_clean_kite}} = g_{\blackref{thm:banana_to_clean_kite}}(q,t)$ with the following property. Let $G$ be a $t$-tidy graph that contains a rigid $l$-dense $f_{\blackref{thm:banana_to_clean_kite}}$-banana $\hat B_{x,z}$. Then, $G$ contains a clean $(x, z)$-tailed $(k,q)$-kite or an $(s,g_{\blackref{thm:banana_to_clean_kite}})$-dome with root $x$.
\end{theorem}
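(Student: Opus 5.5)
The plan is to start from the kite given by Lemma \ref{lem:banana_to_kite}, applied with larger parameters, and then clean it in two stages. First we make the vertices $y_1,\dots,y_{k'}$ pairwise nonadjacent. Then we make every interior vertex of each theta $\Theta_{x,y_i}$ have degree two in the kite, which means no edges between the theta interior and other thetas, their tails, or other interiors. Throughout, assume $G$ has no $(s,g)$-dome with root $x$, where $g$ is at least $g_{\blackref{lem:banana_to_kite}}(q',t)$ and at least $t^t$ (for $K_{t,t}$ arguments). Apply Lemma \ref{lem:banana_to_kite} with $k' \gg k$ and $q' \gg q$ to obtain an $(x,z)$-tailed $(k',q')$-kite $\Theta_{x,y_i,z}=(\Theta_{x,y_i},P_i)$ for $i\in\poi_{k'}$.

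Making $\{y_i\}$ stable is a Ramsey step. Since $G$ is $K_{t+1}$-free, Theorem \ref{thm:ramsey} with $k'\ge (k'')^t$ gives $k''$ indices whose $y_i$ are pairwise nonadjacent, and we restrict to these.

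The main work is the second stage: controlling edges from $\Theta_{x,y_i}^*$ (the interiors of its $q'$ paths, minus $x,y_i$) to the rest of the kite. Define a directed graph $D$ on the indices, with $(i,j)$ an edge if $V(\Theta_{x,y_j,z})\setminus\{x,z\}$ has a neighbor in $\Theta_{x,y_i}^*$. If $D^-$ has a large stable set, we keep those indices. Within a single tailed theta, interior vertices already have degree two: the theta paths are pairwise anticomplete, and the tail is anticomplete to $V(\Theta_{x,y_i})\setminus\{y_i\}$. So the cleaned kite is obtained. The one point to check is that neighbors of $x$ have degree two; this holds because the thetas came from a banana in which $N(x)$ had degree two, via Lemma \ref{lem:degree_two}.

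Otherwise $D^-$ has a large clique. Theorem \ref{thm:tournament} then yields a transitive subtournament $a_1,\dots,a_m,b_1,\dots,b_r$ with every $\Theta_{x,y_{a_i}}^*$ having a neighbor in $V(\Theta_{x,y_{b_j},z})\setminus\{x,z\}$. We aim to build a dome with root $x$. For each $a_i$, take one theta path $Q_{a_i}$ from $x$ whose interior meets the relevant neighbor, and pick a path $x$--$Q_{a_i}$--$u_i$ ending at the first vertex $u_i$ with a neighbor in the target. These paths form a subdivided star with root $x$, since different thetas meet only at $x,z$, and we only take initial segments avoiding $z$. The target $R_j=V(\Theta_{x,y_{b_j},z})\setminus\{x,z\}$ is connected, so inside it we find a path $P$ meeting the neighbors of all leaves, for example via Theorem \ref{thm:connectifier} as in Lemma \ref{lem:banana_to_theta}. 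If some vertex of $P$ sees more than $g$ leaves, we repeat the $K_{t,t}$ pigeonhole of Lemma \ref{lem:degree_two} over many $b_j$: with $r=\binom{m}{g}g$ we find $g\ge t^t$ leaves complete to $g$ vertices, and extracting stable $t$-subsets contradicts $t$-tidiness.

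I expect the main obstacle to be the anticompleteness demanded by the dome. The star's nonleaf part must be anticomplete to $P$, and the leaves must be chosen so that earlier vertices of the star paths do not see $P$. I would handle this by choosing each $u_i$ as the first vertex along $Q_{a_i}$ with a neighbor in $R_{b_j}$, and by keeping $P$ inside $R_{b_j}\setminus N(x)$. Because the star paths lie in distinct thetas, the only possible leaks are star-path vertices adjacent to $P$ before the leaf, and the first-vertex choice rules these out. If the connectifier instead returns a subdivided star or a caterpillar, the degree bound at most $3$ or a $q$-theta is absorbed exactly as in Lemma \ref{lem:banana_to_theta}.

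Finally, the constants are set backwards. The degree bound $g_{\blackref{thm:banana_to_clean_kite}}$ is taken as the maximum of $g_{\blackref{lem:banana_to_kite}}(q',t)$, $t^t$ and $3$. Then $m,r$ are chosen, $w=2^{m+r-1}$, $k''=k^{w}$ for the Ramsey step on $D^-$, and $k'=(k'')^t$.
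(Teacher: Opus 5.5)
\textbf{There is a genuine gap, in the clique branch.} Your stable-set branch is fine. If $\Theta_{x,y_i}^*$ is anticomplete to $V(\Theta_{x,y_j,z})\setminus\{x,z\}$ for all distinct kept $i,j$, and the $y_i$ are pairwise nonadjacent, the kite is clean. The clique branch is where the difficulty lies, and your dome construction there fails at three points.

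First, the legs $x\dd Q_{a_i}\dd u_i$ lie in different thetas. Lemma~\ref{lem:banana_to_kite} only makes distinct tailed thetas vertex-disjoint apart from $x,z$, not anticomplete. Worse, since $a_1,\dots,a_m$ span a tournament in $D$, for every pair $i\neq i'$ one of $\Theta_{x,y_{a_i}}^*$, $\Theta_{x,y_{a_{i'}}}^*$ has a neighbor in the other's tailed theta. So nothing prevents edges between your legs, and their union need not be an induced subdivided star.

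Second, $R_{b_j}$ is a family of pairwise anticomplete theta paths hanging from $y_{b_j}$, plus a tail. If the leaves see different paths of $\Theta_{x,y_{b_j}}$, no path in $R_{b_j}$ meets all their neighbors. Theorem~\ref{thm:connectifier} may then return a subdivided star, which only yields a theta with ends $x$ and its root. That was an allowed outcome in Lemma~\ref{lem:banana_to_theta}, but here it is neither a dome nor a clean kite.

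Third, the $K_{t,t}$ pigeonhole of Lemma~\ref{lem:degree_two} needs a leaf set $X$ that does not depend on the target. Your $u_i$, and even the paths $Q_{a_i}$, change with $b_j$. So taking $\binom{m}{g}g$ targets does not give $g$ fixed vertices complete to $g$ vertices. Contracting whole thetas instead would again require distinct thetas to be anticomplete.

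\textbf{What is missing, and how the paper proceeds.} You need to discard theta \emph{paths} rather than whole tailed thetas, and to build the dome from a single theta together with a tail.
\begin{enumerate}
\item The paper first prunes theta paths so that the interiors of distinct thetas become pairwise anticomplete. This is an argument imported from earlier work of Chudnovsky, Hajebi and Spirkl, and it is what the huge $\phi$ pays for.
\item After making $\{y_i\}$ stable, it puts $i\to j$ when the tail $P_i$ has neighbors in at least $s$ paths of $(\Theta'_{x,y_j})^*$.
\item In the clique case, the legs are initial segments of paths of the single theta $\Theta'_{x,y_j}$, so they are automatically pairwise anticomplete. The dome path is the tail $P_i$, which is automatically a path. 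If every vertex of $P_i$ sees at most $t$ of these theta paths, this is a dome. Otherwise the $K_{t,t}$ pigeonhole works, because the theta paths are fixed objects independent of $i$.
\item In the stable case, each tail sees fewer than $s$ paths of each other theta. Those paths are deleted, which is exactly what the slack $\phi'=q+ks$ is for.
\end{enumerate}
Your $D$ discards a whole tailed theta because of a single edge, so it has no such slack. This pushes all the work into a clique branch where one edge per pair of thetas gives too little structure to build a dome.
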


\begin{proof}
Define
\allowdisplaybreaks
\begin{align*}
	\phi' &= q+ks \\
	\theta &= {\phi' \choose t}t\\
	\gamma &= \theta^t\\
	\psi &= 2^\gamma \\
	\kappa' &= k^{\psi-1} \\
	\kappa &= (\kappa')^{t} \\
	\phi &= 3^{\kappa^2}(\phi')^{2^{\kappa^2}}\\
	f_{\blackref{thm:banana_to_clean_kite}} &= f_{\blackref{lem:banana_to_kite}}(\kappa,l, \phi, s,t)\\
	g_{\blackref{thm:banana_to_clean_kite}} &= \max \{t,g_{\blackref{lem:banana_to_kite}}(q,t)\}
\end{align*}

Suppose that $G$ does not contain an $(s,g_{\blackref{thm:banana_to_clean_kite}})$-dome with root $x$. Lemma \ref{lem:banana_to_kite} implies that $G$ contains an $(x,z)$-tailed $(\kappa, \phi)$-kite $\mathcal K = \{(\Theta_{x,y_i},P_i) \mid i \in \poi_\kappa\}$. We will clean up the kite $\mathcal K$ in three steps. First, we will make $\Theta_{x,y_i}^*$ and $\Theta_{x,y_j}^*$ pairwise anticomplete for $i\neq j$. Second, we will make the set $\{y_1,\dots,y_\kappa\}$ stable. Third, we will make $\Theta_{x,y_i}^*$ and $P_j^*$ anticomplete for $i\neq j$. Note that this implies that each vertex of $\Theta_{x,y_i}^*$ has degree two in the obtained kite. Therefore, our first claim is

\sta{\label{sta:contains_a_kite} $G$ contains an $(x,z)$-tailed $(\kappa,\phi')$-kite $\mathcal K' = \{(\Theta_{x,y_i}',P_i) \mid i \in \poi_\kappa\}$ such that $(\Theta_{x,y_i}')^*$ and $(\Theta_{x,y_j}')^*$ are pairwise anticomplete for $i\neq j$.}

The proof is exactly the same as the proof of Claim (1) in Lemma 3.6 in \cite{tw17}, and we will not repeat it here.

\

Let $\mathcal K'$ be as in (\blackref{sta:contains_a_kite}). Since $G$ is $t$-tidy, Theorem \ref{thm:ramsey} implies that there is a set $I\subseteq \poi_\kappa$ of size $\kappa'$ such that $\{y_i\mid i\in I\}$ is a stable set. By renumbering if necessary, we may assume that $I = \poi_{\kappa'}$. Let $\mathcal K'' = \{(\Theta_{x,y_i}',P_i) \mid i \in \poi_{\kappa'}\}$.

We are now ready for the third step. Consider a directed graph $D$ with vertex set $\poi_{\kappa'}$ and an edge from $i$ to $j$ if and only if $P_i$ has neighbors in at least $s$ paths of $(\Theta'_{x,y_j})^*$. We claim:

\sta{\label{sta:cleanup_complete} $D^-$ does not contain a clique of size $\psi$.}

Suppose that $C$ is a clique of size $\psi$ in $D^-$. Then, $D[C]$ contains a tournament of size $\psi$. By Theorem \ref{thm:tournament}, $D[C]$ contains a transitive tournament $T$ of size $\gamma+1$. Therefore, there exists a set of $\gamma$ paths $\mathcal P=\{P_{i_1},\dots,P_{i_{\gamma}}\}$ and a theta $\Theta'_{x,y_j}$ such that each path in $\mathcal P$ has a neighbor in at least $s$ paths of $(\Theta'_{x,y_j})^*$.

Suppose that there exists a path $P_i\in \mathcal P$ such that each vertex $v$ of $P_i$ has neighbors in at most $t$ paths of $(\Theta'_{x,y_j})^*$. Then, $G[x \cup V((\Theta'_{x,y_j})^*) \cup V(P_i)]$ contains an $(s,g_{\blackref{thm:banana_to_clean_kite}})$-dome with root $x$, which is a contradiction.

Thus, for every $m\in\poi_{\gamma}$, there exists a vertex $p_{i_m}\in P_{i_m}$ such that $p_{i_m}$ has neighbors in at least $t$ paths of $(\Theta'_{x,y_j})^*$. Recall that $G$ is $t$-tidy. By Theorem \ref{thm:ramsey}, there exists a set of $\theta$ vertices $p_{j_1},\dots,p_{j_{\theta}}$ that form a stable set. Since $\theta = {\phi' \choose t}t$, there exist $t$ vertices $p_{k_1},\dots,p_{k_t}$ that all have neighbors in the same $t$ paths $Q_1,\dots,Q_t$ of $(\Theta'_{x,y_i})^*$. However, contracting the paths $Q_1,\dots,Q_t$, yields an induced $K_{t,t}$-minor, which is a contradiction as $G$ is $t$-tidy. This completes the proof of \eqref{sta:cleanup_complete}.

\

By (\ref{sta:cleanup_complete}) and Theorem \ref{thm:ramsey}, $D^-$ contains a stable set $S$ of size $k$. This implies that there exists a set of $k$ $(x,z)$-tailed thetas $\{(\Theta'_{x,y_j}, P_j) \mid j\in S\}$ such that, for $i\neq j$, the path $P_i$ has neighbors in at most $s$ paths of $\Theta'_{x,y_j}$. For each pair $P_i, \Theta'_{x,y_j}$ remove from $(\Theta'_{x,y_j})^*$ the paths that have a neighbor in $P_i$. This will remove at most $ks$ paths from each $\Theta'_{x,y_j}$ resulting in a collection of $q$-thetas $\Theta''_{x,y_{j_1}},\dots,\Theta''_{x,y_{j_k}}$ and their corresponding tails $P_{j_1},\dots,P_{j_k}$ that together form a clean $(k,q)$-kite as claimed. This concludes the proof.
	
\end{proof}

\section{From kites to a dome}
\label{sec:general_step}

We define a \textit{binary tree} to be a rooted tree $T$ such that each node $t\in V(T)$ has at most two children. A {\em full binary tree of radius $r$} is a binary tree in which every vertex that is not a leaf has exactly two children, and every leaf is at distance exactly $r$ from the root. Let $T_r$ denote the full binary tree of radius $r$. For $i\in\poi_r$, we define the {\em $i$-th level set of} $T_r$, denoted by $L_i(T_r)$, as the set of vertices at distance $i$ from the root of $T_r$. Note that $|L_i(T_r)| = 2^i$ and that $L_r(T_r)$ is the set of leaves of $T_r$. For a subdivision $T_r'$ of $T_r$, we define $L_i(T'_r)$ to be $L_i(T_r)$. We define an {\em $r$-fleet $F$ in a graph $G$} to be a pair consisting of a vertex $x$, called the {\em anchor} of the fleet, and a subdivision $T'_r$ of $T_r$, such that $N(x)\cap V(T'_r) = L_r(T'_r)$ (see Figure \ref{fig:fleet}). As with other structures, we will refer to a fleet and the graph induced by its vertex set interchangeably.

\begin{figure}
	\centering
	\includegraphics[width=0.25\linewidth]{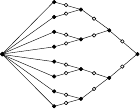}
	\caption{A $3$-fleet.}
	\label{fig:fleet}
\end{figure}

In this section, we complete the proof of Theorem \ref{thm:banana_has_cycles}. The proof strategy is to repeatedly apply Theorem \ref{thm:banana_to_clean_kite} to show that $G$ contains either an $(s,g_{\blackref{thm:banana_has_cycles}}$)-dome or an $s$-fleet $F$. However, as the next lemma shows, $F$ also contains an $(s,1)$-dome. This completes the proof.

\begin{lemma}
\label{lem:fleet}
	Let $F$ be an $s$-fleet in a graph $G$ with anchor $x$. Then, $G$ contains an $(s,1)$-dome with root $x$.
\end{lemma}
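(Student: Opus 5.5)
To prove Lemma \ref{lem:fleet}, I would build the dome directly from the tree of the fleet. The path $P$ will be a root-to-level-$(s-1)$ spine of $T'_s$. The $s$ legs of the star will run from $x$ down to a leaf of $T'_s$ and then up the tree to the side branches hanging off that spine. I treat the fleet as an induced subgraph, as the paper does, so $G[V(T'_s)]$ is exactly the subdivided tree. The definition of a subdivided star requires at least three leaves, so I assume $s\ge 3$.

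\textbf{Step 1: the path $P$.} Let $r=u_0,u_1,\dots,u_{s-1}$ be branch vertices of $T'_s$ with $u_i\in L_i(T'_s)$ and each $u_i$ a child of $u_{i-1}$. Let $P$ be the path of $T'_s$ from $u_0$ to $u_{s-1}$, including the subdivision vertices. Since $N(x)\cap V(T'_s)=L_s(T'_s)$ and $P$ contains no vertex of level $s$, $x$ is anticomplete to $P$.

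\textbf{Step 2: the legs.} For $i\in\poi_s$, let $w_i$ be the child of $u_{i-1}$ not on $P$; for $i=s$ take either child of $u_{s-1}$. Let $a_i$ be the neighbor of $u_{i-1}$ on the subdivided edge towards $w_i$ (possibly $a_i=w_i$). Choose a leaf $\ell_i\in L_s(T'_s)$ in the subtree rooted at $w_i$. Let $S_i$ be the path $x\dd \ell_i\dd T'_s\dd a_i$. Let $S$ be the union of the $S_i$, with root $x$ and leaf set $L=\{a_1,\dots,a_s\}$.

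\textbf{Step 3: checking the dome axioms.} I would verify the following, each using only the tree structure and the neighborhood of $x$.
\begin{enumerate}[(i)]
\item The subtrees below $w_1,\dots,w_s$ are pairwise disjoint and pairwise anticomplete in the induced tree. Also, $x$ is adjacent in $V(S)$ only to the $\ell_i$. Hence $S$ is an induced subdivided star with root $x$ of degree $s\ge 3$ and leaves $a_i$.
\item $S$ is disjoint from $P$. The only edges of $T'_s$ between a branch subtree and the spine are the edges $a_iu_{i-1}$, and $x$ is anticomplete to $P$. Hence $V(S)\setminus L$ is anticomplete to $P$.
\item Each $a_i$ has exactly one neighbor in $P$, namely $u_{i-1}$.
\item The vertices $u_0,\dots,u_{s-1}$ are distinct, so every vertex of $P$ is adjacent to at most one vertex of $L$.
\end{enumerate}
Together these show that $(S,P)$ is an $(s,1)$-dome with root $x$.

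\textbf{Main obstacle.} The main difficulty is only bookkeeping. One must make sure the legs meet the spine solely through the edges $a_iu_{i-1}$. One must also make sure that the leaves $\ell_j$ and the side vertices not used in $S$ create no unwanted adjacencies. Since the dome consists only of $S\cup P$, unused vertices are irrelevant. It is also worth noting the degenerate case where $a_i=\ell_i$, which happens when $u_{s-1}$'s edge to its child is unsubdivided. There $S_i$ is just the edge $x\ell_i$, and all axioms still hold.
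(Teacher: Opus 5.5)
Your proof is correct and uses the same construction as the paper. The path $P$ is a spine of $T'_s$ starting at the root, and each leg of $S$ runs from $x$ through a leaf of a side subtree up to the vertex adjacent to the spine. Two of your choices, ending $P$ at the level-$(s-1)$ branch vertex $u_{s-1}$ and setting $L=\{a_1,\dots,a_s\}$ explicitly, quietly repair two small slips in the paper's version. The paper's $P$ ends at a vertex of $L_s(T'_s)$, which is adjacent to $x$ and so violates the requirement that $V(S)\setminus L$ be anticomplete to $P$. The paper also defines $L$ as the set of degree-one vertices of $G[V(F)\setminus V(P)]$, which misses $a_i$ whenever $i<s$ and the edge from $u_{i-1}$ to $w_i$ is unsubdivided, since then $a_i=w_i$ has degree two there.
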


\begin{proof}
	Let $T'_s$ be the subdivided binary tree with root $r$ in $F$. Let $P$ be a path from $r$ to a vertex in $L_{s}(T'_s)$ and let $L$ be the set of vertices of degree one in $G[V(F)\setminus V(P)]$. Note that $|L|=s$ and that each vertex in $L$ has exactly one neighbor in $P$, and each vertex of $P$ is adjacent to at most one element of $L$. Finally, observe that $G[V(F)\setminus V(P)]$ contains a subdivided star $S$ with root $x$ and leaf set $L$. Thus, $(S,P)$ is an $(s,1)$-dome with root $x$ in $G$ as required.
\end{proof}

\bananaHasCycles*

\begin{proof}

Define
\allowdisplaybreaks
\begin{align*} 
	k_1 &= f_{\blackref{thm:banana_to_clean_kite}}(1,l,2,s,t)\\
	k_1' &= l^{k_1-1}\\
	k_i &= f_{\blackref{thm:banana_to_clean_kite}}(k_{i-1}',l,2,s,t)\ \text{ for } i=2,\dots,s\\
	k_i' &= l^{k_i-1}\ \text{ for } i=2,\dots,s\\
	f_{\blackref{thm:banana_has_cycles}} &= k_s'\\
	g_{\blackref{thm:banana_has_cycles}} &= g_{\blackref{thm:banana_to_clean_kite}}(2,t)
\end{align*}

Suppose that $G$ does not contain an $(s, g_{\blackref{thm:banana_has_cycles}})$-dome with root $x$; we will show that $G$ contains an $s$-fleet.

We start with the following.
\\
\\
\sta{Let $l,q \in \poi$ and let $B_{x,z}=(\{x, z\}, \mathcal{P})$ be an $l$-dense banana with $|\mathcal{P}| \geq l^{q-1}$. Then, $B_{x,z}$ contains a rigid $l$-dense $q$-banana $B_{x,z}'$. \label{rigidbanana}}

Define a graph $H$ with vertex set $\mathcal P$ and an edge between paths $P,Q\in \mathcal P$ if and only if $P^*$ is not anticomplete to $Q^*$. Since $B_{x,z}$ is $l$-dense, $H$ does not contain a stable set of size $l$. Therefore, Theorem \ref{thm:ramsey} implies that $H$ contains a clique $\mathcal{P}'$ of size $q$. Now, $B'_{x,z}=(\{x,z\}, \mathcal{P}')$ is a rigid banana, as required. This proves \eqref{rigidbanana}.

\

Next, we prove:

\sta{\label{sta:has_a_fleet} Let $B_{x,z}$ be an $l$-dense $k_s'$-banana that is a $t$-tidy graph with no $(s,g_{\blackref{thm:banana_has_cycles}})$-dome with root $x$. Then, $B_{x,z}$ contains an $s$-fleet with anchor $x$.}

We prove \eqref{sta:has_a_fleet} by induction on $s$. We start with the base case. Let $B_{x,z} = (\{x,z\},\mathcal P)$ be an $l$-dense $t$-tidy $k_1'$-banana with no $(s,g_{\blackref{thm:banana_has_cycles}})$-dome with root $x$.
By \eqref{rigidbanana}, $B_{x,z}$ contains a rigid $k_1$-banana $B'_{x,z}$. By Theorem \ref{thm:banana_to_clean_kite}, $B_{x,z}'$ contains a clean $(x,z)$-tailed $(1,2)$-kite $\mathcal K$. Let $\Theta_{x,y}$ be the only theta of $\mathcal K$ (in fact, $\Theta_{x,y}$ is an induced cycle). Then, $\Theta_{x,y}$ is a $1$-fleet with anchor $x$. This completes the base case.

Now, suppose that we have proved \eqref{sta:has_a_fleet} for every $s' < s$, and let $B_{x,z}$ be an $l$-dense $t$-tidy $k_s'$-banana with no $(s,g_{\blackref{thm:banana_has_cycles}})$-dome with root $x$.
By \eqref{rigidbanana}, $B_{x,z}$ contains a rigid $l$-dense $k_s$-banana $B_{x,z}'$. By Theorem \ref{thm:banana_to_clean_kite}, $B_{x,z}'$ contains a clean $(x,z)$-tailed $(k_{s-1}',2)$-kite $\mathcal K$. Let $B''_{x,z}$ be obtained from $\mathcal K$ by replacing each theta $\Theta_{x,y_i}$ of $\mathcal{K}$ by an edge $xy_i$.

Since $B''_{x,z}$ is an induced minor of $B'_{x,z}$ obtained from $\mathcal{K}$ by contracting only edges with at least one end of degree two in $\mathcal{K}$, and since $\{y_1,\dots,y_{k_{s-1}'}\}$ is a stable set, it follows that $B''_{x,z}$ is $t$-tidy, $l$-dense and does not contain an $(s,g_{\blackref{thm:banana_has_cycles}})$-dome with root $x$.

Inductively, $B''_{x,z}$ contains an $(s-1)$-fleet $F'$ with anchor $x$. Renumbering if necessary, we may assume that $y_1, \dots, y_{2^{s-1}}$ are the neighbors of $x$ in $F'$. Replacing each of the edges $xy_i$ in $F'$ by the cycle $\Theta_{x,y_i}$, we obtain an $s$-fleet $F$ with anchor $x$, as claimed. This completes the proof of \eqref{sta:has_a_fleet}.

\

Now, let $B_{x,z}$ be as in the statement of the theorem. By \eqref{sta:has_a_fleet},
$B_{x,z}$ contains an $s$-fleet $F$ with root $x$. By Lemma \ref{lem:fleet}, $F$ contains an $(s,1)$-dome with root $x$, which is a contradiction. This concludes the proof.

\end{proof}

\section{Detecting induced cycles}
\label{sec:algo}

In this section, we prove Theorem \ref{thm:algo_bounded}.

Throughout this and the next section, we will be using the notation $G=(V,E)$ with $|V|=n$ for $n\in \poi$. For a tree decomposition $(T,\chi)$ where $T$ is a binary tree, and a node $t\in V(T)$ whose parent is $p\in V(T)$, we define the \textit{adhesion} of $t$, denoted $A(t)$, to be $\chi(t)\cap \chi(p)$. The adhesion of the root node $r$ is the empty set. 

Define a \textit{descendant of} to be the reflexive and transitive relation on the nodes of $T$ such that the children of a node $t$ are its descendants. Thus, $t$ is a descendant of itself, and all of its children and their descendants are also descendants of $t$. With this, let $d(t)$ be the set of descendants of $t$ in $T$. Additionally, let $G_t$ be the graph $G[\cup_{v\in d(t)} \chi(v)]$. We will need the following theorem:

\begin{theorem} [Bodlaender, Kloks \cite{Bodlaender,Kloks}]
\label{thm:tree_decomp}
Let $k \in \poi$ be a constant. For a graph $G$, there exists an algorithm with running time $O(n)$ that decides whether $\tw(G)\le k$ and if so outputs a tree decomposition $(T,\chi)$ of $G$ of width at most $k$ such that $T$ is a binary tree with $O(n)$ nodes.
\end{theorem}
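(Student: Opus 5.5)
This statement is a black-box combination of the linear-time treewidth algorithm of Bodlaender with the dynamic programming of Bodlaender and Kloks. My plan is therefore not to reprove it, but to explain how the stated form (binary tree with $O(n)$ nodes) follows from the published results by routine post-processing.

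The first step is to run Bodlaender's algorithm for the fixed constant $k$. It rests on the Bodlaender--Kloks procedure, which, given a tree decomposition of width at most some constant $k'$, decides in linear time whether $\tw(G)\le k$ and, if so, produces a decomposition of width at most $k$. The linear-time wrapper obtains this width-$k'$ decomposition recursively:
\begin{itemize}
\item either it finds a linear-size matching, contracts it, and recurses on the resulting graph, which has a constant fraction fewer vertices;
\item or it finds many ``friendly''/simplicial-type vertices, removes them, and recurses.
\end{itemize}
In either case it then lifts the returned decomposition back, with width at most $2k+1$. A geometric-series bound on the recursion gives total time $O(n)$. If at any point a certificate of treewidth greater than $k$ appears (for instance too many edges, since a graph of treewidth at most $k$ has at most $kn$ edges), the algorithm answers no. I would simply cite this and not reprove it. It is by far the main obstacle, and it is exactly the content of \cite{Bodlaender,Kloks}.

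Next, the output tree decomposition $(T,\chi)$ of width at most $k$ is normalized in linear time.
\begin{enumerate}[(i)]
\item While some edge $tt'$ of $T$ has $\chi(t)\subseteq\chi(t')$, contract it and keep the larger bag. This preserves the axioms (i)--(iii) and the width. Afterwards every node has a vertex of $G$ that is ``forgotten'' when moving toward a fixed root, so $|V(T)|\le n$. The contractions can be done in one traversal, since each inclusion test costs $O(k)$.
\item Root $T$ arbitrarily. Replace every node $t$ with children $c_1,\dots,c_d$, $d\ge 3$, by a path $t=t_1,t_2,\dots,t_{d-1}$ of copies of $t$, all with bag $\chi(t)$. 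Node $t_i$ gets child $c_i$ and child $t_{i+1}$, and $t_{d-1}$ gets children $c_{d-1},c_d$. Every bag is a copy of an old one, and each vertex's occurrence set stays connected, so the result is a tree decomposition of the same width. The number of added nodes is at most $\sum_t d(t)\le |V(T)|$, so $T$ remains a binary tree with $O(n)$ nodes.
\end{enumerate}
All of this is linear in $|V(T)|\cdot k=O(n)$, which gives the theorem.
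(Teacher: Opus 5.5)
Your proposal is correct and matches the paper, which gives no proof of this statement and simply cites Bodlaender and Kloks. Your post-processing, which splits each node with $d\ge 3$ children into a path of $d-1$ copies of its bag, correctly yields the binary-tree form without increasing the width. Step (i) is harmless but not needed, since any decomposition output by an $O(n)$-time algorithm already has $O(n)$ nodes; if you keep it, the contractions must be done one at a time with rechecks rather than all at once, because contracting one containment edge can make an adjacent containment edge incomparable.
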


Let $t\in T$ be a node and let $Q\subseteq \chi(t)$ be an ordered set $\{q_1,\dots,q_l\}$ such that\break $N(q_i)\cap Q \subseteq \{q_{i-1},q_{i+1}\}$ for $2\le i\le l-1$. Let $R\subseteq \{(q_1,q_2),\dots,(q_{l-1},q_l),(q_l,q_1)\}$ such that if $q_iq_{(i \text{ mod } l)+1}\in E(G)$, then $(q_i,q_{(i \text{ mod } l)+1}) \in R$. Let $X\subseteq A(t)\setminus Q$. We define a {\em path system $S_{Q,R,X}$ in $G_t$} to be a set of paths $\{P_r \mid r\ \in R\}$ in $G_t$ such that for each $r\in R$,
\begin{itemize}
	\item $P_r$ has ends $q_i,q_{(i \text{ mod } l)+1}$ where $r=(q_i,q_{(i \text{ mod } l)+1})$,
	\item $P_r^*$ is disjoint from $P_{r'}^*$ for $r' \neq r$,
	\item $P_r^* \subseteq V(G_t) \setminus (\chi(t)\cup N(X))$, and
	\item each vertex of $P_r^*$ has degree two in $V(S_{Q,R,X})$.
\end{itemize}
We will treat $S_{Q,R,X}$ and $G[V(S_{Q,R,X})]$ interchangeably. Note that a path system is either a cycle or a graph in which every component is a path. The \textit{length of} a path system $S_{Q,R,X}$ is defined as $|E(S_{Q,R,X})|$. The \textit{length variation of} $Q,R,X$ in $G_t$, denoted by $\lv(Q,R,X,G_t)$, is the set of all possible lengths of a path system $S_{Q,R,X}$ in $G_t$, i.e., \[\lv(Q,R,X,G_t)=\{l\in \poi \mid \text{there exists a path system } S_{Q,R,X} \text{ in } G_t \text{ of length } l \}.\] For $c\in \poi$, we define the $c$-\textit{bounded length variation of} $Q,R,X$ in $G_t$, denoted by $\lv_c(Q,R,X,G_t)$, to be an arbitrary subset of $\lv(Q,R,X,G_t)$ of size $\min\{c,|\lv(Q,R,X,G_t)|\}$.

We are now ready to present the first algorithm.

\algoBounded*

\begin{proof}

	First, we describe an algorithm that outputs a set $\mathcal C$ of size $\min \{c,\cl(G)\}$ such that for each $l\in \mathcal C$, $G$ contains an induced cycle of length $l$. Later, we will describe how to add backtracking to this algorithm to output not only the lengths but the actual induced cycles.
	
	Using the algorithm from Theorem \ref{thm:tree_decomp}, find a tree decomposition $(T,\chi)$ of width at most $k$. Assume that each node $t\in V(T)$ that is not a leaf has two children where if needed let $\chi(t')=\varnothing$ for the missing child $t'$. This increases the number of nodes by at most a factor of two, so the total number of nodes is still $O(n)$.
	
	We will solve the problem via dynamic programming over $T$. We will maintain a set $\mathcal C$ (initialized to $\varnothing$) that contains lengths of induced cycles in $G$. Once the size of $\mathcal C$ reaches $c$, we simply output it. Additionally, for each node $t\in V(T)$, we will maintain a table $M_t$ indexed by $Q,R,X$ such that each entry $M_t[Q,R,X]$ is a set of integers with the property that if $l\in M_t[Q,R,X]$, then there exists a path system $S_{Q,R,X}$ in $G_t$ of length $l$, and moreover $|M_t[Q,R,X]|\le c$. By the end of the run of the algorithm, $M_t[Q,R,X]$ will contain $\lv_c(Q,R,X,G_t)$.
		
	Let $t\in V(T)$ be a node that is not a leaf and let $t_1,t_2$ be the two children of $t$. Intuitively, our algorithm does the following: to compute an entry $M_t[Q,R,X]$, it guesses which vertices of $\chi(t_1)\setminus A(t_1)$ and $\chi(t_2)\setminus A(t_2)$ are needed to connect the pairs in $R$ and in what order they appear on the corresponding paths, and then queries appropriate path systems in $M_{t_1}$ and $M_{t_2}$. While doing the lookups in $M_{t_1}$ and $M_{t_2}$, we specify the sets $X_1\subseteq A(t_1),X_2\subseteq A(t_2)$ of vertices that need to be anticomplete to the interiors of the paths of the path systems found in $G_{t_1},G_{t_2}$ to ensure correctness. We then also check if $S_{Q,R,X}$ forms an induced cycle in $G_t$ and record its length. 
	
	We process the nodes bottom up, from the leaves to the root, processing each node once all of its children have been processed. After all the nodes have been processed, we either output ``$\cl(G)<c$'' if the size of $\mathcal C$ is smaller than $c$, or otherwise we output $\mathcal C$. Let us now describe how to process each node. We will prove correctness later.
	\\
	
	\noindent
	\textit{Base Case:}
	\\
	
	We start by processing the leaves of $T$. Let $t\in V(T)$ be a leaf node in $T$. For each ordered set $Q = \{q_1,\dots,q_l\}\subseteq \chi(t)$ such that $N(q_i)\cap Q \subseteq \{q_{i-1},q_{i+1}\}$ for $2\le i\le l-1$, let $R = \{(q_i,q_{(i \text{ mod } l)+1})\mid q_iq_{(i \text{ mod } l)+1}\in E(G)\}$. For all sets $X\subseteq A(t)\setminus Q$, we add $|E(Q)|$ to $M_t[Q,R,X]$. We then check if $G[Q]$ is an induced cycle, and if so, add its length to $\mathcal C$.
	
	\
	
	\noindent
	\textit{General Case:}
	\\

	Let $t\in V(T)$ be a node such that all the descendants of $t$ have already been processed. Denote the two children of $t$ by $t_1$ and $t_2$. Our goal is to compute the table $M_t$ and then query it to record the lengths of all induced cycles in $G_t$ that have not yet been recorded while processing $G_{t_1}$ and $G_{t_2}$.
	
	Iterate over 
	\begin{itemize}
		\item all ordered sets $Q = \{q_1,\dots,q_l\}\subseteq \chi(t)$ such that $N(q_i)\cap Q \subseteq \{q_{i-1},q_{i+1}\}$ for $2\le i\le l-1$,
		\item all sets $R\subseteq \{(q_1,q_2),\dots,(q_{l-1},q_l),(q_l,q_1)\}$ such that if $q_iq_{(i \text{ mod } l)+1}\in E(G)$, then $(q_i,q_{(i \text{ mod } l)+1}) \in R$, and
		\item all sets $X\subseteq A(t)\setminus Q$.
	\end{itemize}
	
	For each such triple $Q,R,X$, let $R' = R \setminus \{(q_i,q_{(i \text{ mod } l)+1})\mid q_iq_{(i \text{ mod } l)+1}\in E(G)\} = \{r_1,\dots,r_{|R'|}\}$, i.e., $R'$ contains the pairs of $R$ that are not yet connected. Iterate through all vectors $I$ in $\{1,2\}^{|R'|}$ such that if $I_j=i$, then $\{q,q'\}\subseteq A(t_i)$ where $r_j=(q,q')$. Intuitively, the vector $I$ indicates whether a pair $r\in R'$ should be connected by a path in $G_{t_1}$ or $G_{t_2}$. For each such vector, iterate through all $|R'|$-tuples $(V_1,\dots,V_{|R|'})$ such that
	\begin{itemize}
		\item each $V_j$ is an ordered subset of $\chi(t_i)\setminus (A(t_i) \cup N(X))$ where $i = I_j$,
		\item $V_i$ and $V_j$ are disjoint and anticomplete for $i\neq j$, and
		\item each $V_j$ is anticomplete to $Q\setminus \{q,q'\}$ where $r_j=(q,q')$.
	\end{itemize}
	Each set $V_j$ contains the vertices of $\chi(t_{I_j})$ used to connect the pair $r_j$.
	
	For each $q\in Q$, if $r_j=(q,q')$, then define $Z(q)$ to be a list of elements of $V_j$ in the same order as they appear in $V_j$, and otherwise define $Z(q)$ to be an empty list. For $i\in \{1,2\}$, define
	\begin{itemize}
		\item $R_i'=\{r_j \in R'\mid I_j = i\}$ (these are the pairs of $R'$ assigned to be connected through $G_{t_i}$),\\
		\item $Q_i' = \{q\in Q\mid (q,q')\in R_i' \text{ or } (q',q)\in R_i' \text{ for some } q' \in Q\} = \{q_{p_1},\dots,q_{p_i}\}$ where $p_1<p_2<\dots<p_i$ (these are the vertices of $R_i'$ in the same order as they appear in $Q$),\\
		\item $Q_i = \{q_{p_1},Z(q_{p_1}),q_{p_2},Z(q_{p_2}),\dots,q_{p_i},Z(q_{p_i})\} = \{q_1^i,\dots,q_{m_i}^i\}$ to be an ordered set, \\
		\item $R_i=\{(q_1^i,q_2^i),\dots,(q_{m_i-1}^i,q_{m_i}^i),(q_{m_i}^i,q_1^i)\}\setminus \{(q,q')\mid q,q'\in Q_i'\text{ and } qq' \notin E(G)\}$, and\\
		\item $X_i = (A(t_i)\cap (Q\cup X))\setminus Q_i$.
	\end{itemize}
	
	With $Q_i,R_i,X_i$ defined, query $L_i = M_{t_i}[Q_i,R_i,X_i]$ for $i=1,2$ (if $\chi(t_i) = \varnothing$, then $L_i=\varnothing$). Let $s=|E(Q) \setminus (E(Q_1) \cup E(Q_2))|$. For each pair $l_1\in L_1,l_2\in L_2$, add $s+l_1+l_2$ to $M_t[Q,R,X]$. If $R=\{(q_1,q_2),\dots,(q_{l-1},q_l),(q_l,q_1)\}$, then also add $s+l_1+l_2$ to $\mathcal C$. This completes the description of the algorithm.
	
	\
	 
	\noindent
	\textit{Running time:}
	\\
	
	Per each bag, the number of sets $Q,R,X$, vectors $I$, and $|R'|$-tuples $(V_1,\dots,V_{|R|'})$ is bounded by $O(1)$ as the size of each bag is at most $k$ (which is a constant). Forming the sets $R_1,R_2,Q_1,Q_2,X_1,X_2$ and checking that the set $Q$ and the tuple $(V_1,\dots,V_{|R|'})$ are valid can be done in $O(k^2)=O(1)$ by looping over an appropriate submatrix of the adjacency matrix of $G$. Since each queried set in the tables $M_{t_1},M_{t_2}$ contains at most $c$ elements, looping through it can also be done in $O(1)$. Thus, there exists a function $f:\poi \to \poi$ such that the running time per node is $O(f(k,c))=O(1)$. The running time of the algorithm from Theorem \ref{thm:tree_decomp} is $O(n)$, and since it returns a tree decomposition with $O(n)$ nodes, the total running time of our algorithm is $O(n)$.
	
	\
	
	\noindent
	\textit{Proof of correctness:}
	\\
	
	Consider an unrestricted version of our algorithm so as to not restrict the size of each entry in the tables $M_t$ and the set $\mathcal C$ to at most $c$. If the unrestricted version outputs a correct list of all induced cycle lengths of $G$, then to decide whether $\cl(G)\ge c$, it is enough to detect only $c$ path systems of different lengths for each triple $Q,R,X$ as all such path systems are interchangeable. Thus, for the rest of the proof, we may work with the unrestricted version of our algorithm. 
	
	We prove that the algorithm produces a correct output by combining claims \eqref{sta:table_is_correct}, \eqref{sta:table_is_correct2}, \eqref{sta:each_cycle_length_is_considered}, and \eqref{sta:lengths_are_valid} presented below. We will prove claims \eqref{sta:table_is_correct} and \eqref{sta:table_is_correct2} by induction on the size of the tree $T$.
	
	\sta{\label{sta:table_is_correct} Let $t\in V(T)$, $Q=\{q_1,\dots,q_l\}$, $R\subseteq \{(q_1,q_2),\dots,(q_{l-1},q_l), (q_l,q_1)\}$, and $X\subseteq A(t)\setminus V(Q)$. If $G_t$ contains a path system $S_{Q,R,X}$, then $M_t[Q,R,X]$ contains the length of $S_{Q,R,X}$.}
	
	{\em Base case:}
	Note that in the base case when $T$ is just one node, $R = \{(q_i,q_{(i \text{ mod } l)+1})\mid q_iq_{(i \text{ mod } l)+1}\in E(G)\}$. Thus, the length of $S_{Q,R,X}$ is $|E(Q)|$, which will be recorded by the algorithm.
	
	\
	
	{\em Inductive step:}
	At some point, the algorithm will consider the triple $Q,R,X$ as it iterates over all such triples. Since $S_{Q,R,X}$ is a path system, the set $Q$ fulfills the condition that $N(q_i)\cap Q \subseteq \{q_{i-1},q_{i+1}\}$ for $2\le i\le l-1$, and the set $R$ fulfills the condition that if $q_iq_{(i \text{ mod } l)+1}\in E(G)$, then $(q_i,q_{(i \text{ mod } l)+1}) \in R$. Thus, these checks will succeed.	Let $R'$ be defined as in the algorithm description. By the definition of a path system, it follows that for each $r_i=(q,q')\in R'$, the vertices $q$ and $q'$ are connected by a path $P_i$ in either $G_{t_1}$ or $G_{t_2}$.
	
	Eventually, our algorithm will consider a vector $I$ and an $|R'|$-tuple $(V_1,\dots,V_{|R|'})$ such that $V_j=P_j\cap (\chi(t_i) \setminus (A(t_i)\cup N(X)))$ where $I_j=i$ for each $j\in \poi_{|R'|}$. It is also clear that
	\begin{itemize}
		\item $V_i$ and $V_j$ are disjoint and anticomplete for $i\neq j$, and
		\item that each $V_j$ is anticomplete to $Q\setminus r_j$.
	\end{itemize}
	
	Now, observe that the path systems found for $Q_1,R_1,X_1$ and $Q_2,R_2,X_2$ together with the set $Q$ form a path system for $Q,R,X$. Inductively, the entries $M_{t_1}[Q_1,R_1,X_1]$ and $M_{t_2}[Q_2,R_2,X_2]$ have been computed correctly. Therefore, they contain the lengths $l_1,l_2$ of path systems $S_{Q_1,R_1,X_1},S_{Q_2,R_2,X_2}$ that together with $Q$ form a path system of the same length as $S_{Q,R,X}$. This completes the proof of \eqref{sta:table_is_correct}.
	
	\sta{\label{sta:table_is_correct2} Let $t\in V(T)$, $Q=\{q_1,\dots,q_l\}$, $R\subseteq \{(q_1,q_2),\dots,(q_{l-1},q_l), (q_l,q_1)\}$, and $X\subseteq A(t)\setminus V(Q)$. If $m\in M_t[Q,R,X]$, then there exists a path system $S_{Q,R,X}$ in $G_t$ of length $m$.}
	
	{\em Base case:} In the base case when $T$ is just one node, when $m$ is added to $M_t[Q,R,X]$ it holds that $m=|E(Q)|$ and that $Q$ forms a path system. Thus, the claim holds.
	
	\
	
	{\em Inductive step:}
	Consider the first time that $m$ is added to $M_t[Q,R,X]$. Let $Q_1,Q_2,R_1,R_2,\allowbreak X_1,X_2$ and $l_1,l_2$ be defined as in the algorithm description. Inductively, the entries $\break M_{t_1}[Q_1,R_1,\allowbreak X_1]$ and $M_{t_2}[Q_2,R_2,X_2]$ have been computed correctly. Therefore, there exist path systems $S_{Q_1,R_1,X_1},S_{Q_2,R_2,X_2}$ in $G_{t_1},G_{t_2}$ of lengths $l_1,l_2$ respectively.
	
	By the definition of the sets $Q_1,Q_2,R_1,R_2,\allowbreak X_1,X_2$, we have that for each pair $(q,q')\in R$, either $qq'\in E(G)$ or $q$ and $q'$ are connected by a path in one of $S_{Q_1,R_1,X_1}, S_{Q_2,R_2,X_2}$. By the conditions imposed on the $|R'|$-tuple $(V_1,\dots,V_{|R|'})$, it follows that for each $i\in |R'|$, each vertex of $V_i$ has degree two in $Q \cup S_{Q_1,R_1,X_1} \cup S_{Q_2,R_2,X_2}$ and is anticomplete to $Q\setminus r_i$. Therefore, $Q \cup S_{Q_1,R_1,X_1} \cup S_{Q_2,R_2,X_2}$ forms a path system $S_{Q,R,X}$ in $G_t$ of length $m$ as required. This completes the proof of \eqref{sta:table_is_correct2}.

	\pagebreak[2]
	
	\sta{\label{sta:each_cycle_length_is_considered} If $G$ contains an induced cycle $C$, then $\mathcal C$ contains $|C|$.}
	
	Recall that we process nodes of $T$ bottom up. Consider the first node $t\in V(T)$ such that $C$ is contained in $G_t$. Note that $|C\cap \chi(t)|\ge 2$ as otherwise $C$ is fully contained in either $G_{t_1}$ or $G_{t_2}$, both of which have been processed before $t$. Suppose that $C\cap \chi(t) = \{q_1,\dots,q_l\}$ such that $q_1,\dots,q_l$ appear in $C$ in this order. Note that by the definition of a tree decomposition, each pair $q_i,q_{(i \text{ mod } l)+1}$ is connected by either an edge or a path in $G_{t_1}$ or $G_{t_2}$. Then, $C$ forms a path system $S_{Q,R,X}$ with $R=\{(q_1,q_2),\dots,(q_{l-1},q_l), (q_l,q_1)\}$ and $X=\varnothing$. By \eqref{sta:table_is_correct}, $M_t[Q,R,X]$ is computed correctly, and thus it contains $|C|$. This completes the proof of \eqref{sta:each_cycle_length_is_considered}.
	
	\sta{\label{sta:lengths_are_valid} If $\mathcal C$ contains $l\in \poi$, then $G$ contains a cycle $C$ of length $l$.}
	
	Consider the first time $l$ has been added to $\mathcal C$. At that point, the algorithm has just added $l$ to $M_t[Q,R,X]$ for some node $t$, set $Q=\{q_1,\dots,q_s\}$, set $X \subseteq A(t)\setminus Q$, and set $R = \{(q_1,q_2),\dots,(q_{s-1},q_s), (q_s,q_1)\}$. Note that the corresponding path system $S_{Q,R,X}$ of length $l$ is indeed an induced cycle $C$ of length $l$ with $C\cap \chi(t) = Q$. This completes the proof of \eqref{sta:lengths_are_valid}.

\

	\noindent
	\textit{Backtracking:}

\

	First, we extend the tables $M_t$ so that for every $l\in M_t[Q,R,X]$, when we add $l$ to $M_t[Q,R,X]$, we additionally store the current sets $Q_1,Q_2,R_1,R_2,X_1,X_2$ and the lengths $l_1,l_2$ queried in the tables $M_{t_1}[Q_1,R_1,X_1],M_{t_2}[Q_2,R_2,X_2]$ that were used to obtain a path system $S_{Q,R,X}$ of length $l$. This is only $O(1)$ overhead per entry.
	
	Then, when we detect an induced cycle of a new length, we recursively look up all the vertices used to form this induced cycle. This requires $O(1)$ time per node, and as there are $O(n)$ nodes, the whole lookup takes time $O(n)$ per induced cycle. Since we only record at most $c$ induced cycle lengths, the overall time spent for backtracking is thus also only $O(n)$.
	
\end{proof}

\section{Finding three induced cycles}
\label{sec:three_cycles}

In this section, we describe how to apply the algorithm from Theorem \ref{thm:algo_bounded} to decide whether $\cl(G)\ge 3$ in polynomial time. Before we provide this algorithm, we need to introduce a result from \cite{NonShortest}.

\begin{theorem} [Berger, Seymour, Spirkl \cite{NonShortest}]
\label{thm:non_shortest_path}
Let $G$ be a graph and let $u,v\in V(G)$ be two vertices.\ There exists an algorithm with running time $O(n^{18})$ that decides whether there exists a $u\dd v$ path in $G$ that is not a shortest $u\dd v$ path. If such a path exists, the algorithm will output it.
\end{theorem}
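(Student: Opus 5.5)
The plan is to reduce the search to guessing a bounded number of vertices of a suitably extremal witness and filling in the rest of the path with ordinary shortest paths. This is the strategy of Berger, Seymour and Spirkl. Let $d(\cdot,\cdot)$ denote distance in $G$, and assume $u$ and $v$ lie in the same component (otherwise there is nothing to decide). Partition $V(G)$ into BFS layers $D_i=\{w : d(u,w)=i\}$; these can be computed in linear time.

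A $u\dd v$ path $P=p_0\dd p_1\dd\cdots\dd p_m$ (with $p_0=u$, $p_m=v$) is shortest if and only if $p_i\in D_i$ for all $i$. Consequently a non-shortest path has a first index $i$ at which $p_{i+1}\notin D_{i+1}$. Since $p_{i+1}$ is adjacent to $p_i\in D_i$, this means $p_{i+1}\in D_i\cup D_{i-1}$.

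Among all non-shortest $u\dd v$ paths I would fix one, $P$, that is extremal: first minimize its length, then minimize the index $i$ of the first defect. The initial segment $p_0\dd\cdots\dd p_i$ is a shortest $u\dd p_i$ path. The key structural claims I would try to prove from extremality are:
\begin{enumerate}[\rm (i)]
\item the tail $p_{i+1}\dd P\dd v$ can be taken to be a shortest path in the graph $G\setminus N[\{p_0,\dots,p_{i-1}\}]$, since otherwise it could be shortened while the whole path stays non-shortest;
\item interference between the head and the tail is confined to a constant number of vertices near the defect, say $p_{i-2},\dots,p_{i+2}$ together with a few vertices of the head lying in layers near those of the tail.
\end{enumerate}
Claim (ii) should follow because the layer structure forces every edge of $G$ to join equal or consecutive layers.

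The algorithm then guesses this constant-size configuration $Z$ of vertices around the defect, which gives $n^{O(1)}$ choices. For each guess it does the following.
\begin{enumerate}[\rm (a)]
\item Compute a shortest $u\dd z$ path through the layers, for the appropriate vertex $z\in Z$, inside the graph obtained by deleting the neighbours of the remaining guessed vertices.
\item Compute a shortest path from the last guessed vertex to $v$ in the graph obtained by deleting the closed neighbourhood of everything used so far.
\item Check that the concatenation is induced and has length greater than $d(u,v)$, and output it if so.
\end{enumerate}
Correctness in one direction is immediate, because every returned path is checked. For the other direction I must show that, for the correct guess, the replacement paths found by BFS are again anticomplete to each other. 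The exponent $18$ should come from the number of guessed vertices times the cost of the BFS computations.

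The main obstacle is step (b) and the inducedness check as a whole. An arbitrary shortest path to $v$ chosen for the tail may have neighbours on the head, and deleting the whole head's neighbourhood in advance is circular, because the head is not known before the tail is chosen. I would attack this by an exchange argument using minimality of $P$. Suppose some vertex $w$ of the replacement tail is adjacent to a head vertex $p_j$ with $j$ small. Then the layer index of $w$ is at most $j+1$, so $w$ lies in an early layer. From this one should be able to build either a shorter non-shortest path or one with an earlier first defect, contradicting the choice of $P$. The goal is to show that only head vertices within bounded distance of the defect can possibly be adjacent to the tail, and those are exactly the guessed vertices whose neighbourhoods are deleted.
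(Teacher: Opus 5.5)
The paper does not prove this statement. It is quoted from Berger, Seymour and Spirkl \cite{NonShortest} and used only as a black box in the proof of Theorem~\ref{thm:algo_3}.

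Judged on its own, your proposal has a genuine gap, exactly where you locate the difficulty. Nothing shows that, for some bounded guess $Z$, the head from step (a) and the tail from step (b) are compatible. Claim (ii) also does not follow from the fact that edges join equal or consecutive layers. That fact localises interference only by layer, but the tail of an extremal non-shortest path can come back down to layers far below the defect.

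In fact (ii) is false. Let $i\ge 5$ and $d\ge 3$. Let $G$ be the union of the paths $u\dd a_1\dd a_2\dd\cdots\dd a_i\dd c_i\dd c_{i-1}\dd\cdots\dd c_3\dd b_2$ and $u\dd b_1\dd b_2\dd\cdots\dd b_{d-1}\dd v$, plus one vertex $a_1'$ adjacent to exactly $u,a_2,b_2$. Then $d(u,v)=d$ and $a_k,b_k,c_k\in D_k$. A direct check shows that the only non-shortest induced $u\dd v$ path is $P=u\dd a_1\dd\cdots\dd a_i\dd c_i\dd\cdots\dd c_3\dd b_2\dd\cdots\dd b_{d-1}\dd v$. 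Its first defect is $p_{i+1}=c_i\in D_i$, and its tail drops to $b_2\in D_2$.

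Take $Z=\{p_{i-2},\dots,p_{i+2}\}$. Step (a) may return the geodesic $u\dd a_1'\dd a_2\dd\cdots\dd a_{i-2}$, since it avoids the neighbours of the other guessed vertices. But $b_2\in N(a_1')$ is the only exit from $\{c_3,\dots,c_{i-1}\}$ towards $v$, so step (b) finds nothing.

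Because $P$ is the unique non-shortest path, no exchange argument based on minimal length or earliest defect can exclude this. The obstruction sits at layers $1$ and $2$, about $i$ steps from the defect. Your clause about ``head vertices in layers near those of the tail'' gives no bounded rule, because here the tail meets every layer from $2$ to $i$.

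The same failure occurs on your side of the exchange. Suppose $d\ge 4$ and, instead of $a_1'$, you add a vertex $b_2'$ adjacent to exactly $a_1,c_3,b_3$. Then $P$ is still the unique non-shortest path. Yet $c_i\dd\cdots\dd c_3\dd b_2'\dd b_3\dd\cdots\dd v$ is an equally short replacement tail adjacent to $p_1$. So the statement you set as the goal of your exchange argument is false.

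What is missing is a structural lemma that pins down the whole extremal path by boundedly many vertices. In the example you must in effect guess $b_2$ (where $d(u,\cdot)$ stops decreasing along $P$) or $a_1$, not vertices near the first defect. You then also need a proof that the remaining gaps can be filled by shortest paths in suitable subgraphs without creating chords. That lemma is the real content of \cite{NonShortest}. It also fixes the number of guessed vertices, and hence the exponent $18$, which your sketch leaves undetermined.

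A smaller point: in (i), the shortening argument only shows that the tail is shortest in $G\setminus\bigl(N[\{p_0,\dots,p_{i-1}\}]\cup(N(p_i)\setminus\{p_{i+1}\})\bigr)$. A shorter tail through another neighbour of $p_i$ does not combine with the head into an induced path.
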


Now, we are ready to present our second algorithm.

\algoThree*

\begin{proof}
	
	Suppose that $G$ is $\{K_3,K_{2,2}\}$-free. Let $k=f_{\blackref{thm:main_result}}(c,2)$ be the threshold from Theorem \ref{thm:main_result}. Test whether $\tw(G) \le k-1$ using Theorem \ref{thm:tree_decomp}. If so, then apply Theorem \ref{thm:algo_bounded} and output the result. Otherwise, remove arbitrary vertices from $G$ one by one until the treewidth of the resulting graph $G'$ is at most $k-1$. Add the last removed vertex back to $G'$ to obtain $G''$. Observe that $\tw(G'')= k$ and that by Theorem \ref{thm:main_result}, $G''$ (and hence $G$) contains three cycles of distinct lengths. Apply Theorem \ref{thm:algo_bounded} to $G''$ and output the result. The correctness follows since $G''$ is an induced subgraph of $G$. The running time of the algorithms from Theorems \ref{thm:tree_decomp} and \ref{thm:algo_bounded} is $O(n)$ and thus the total running time of this step is $O(n^2)$.
	
	Now, suppose that $G$ contains a $K_3$. Thus, the shortest induced cycle in $G$ has length three. We can find one cycle $C^0$ of length three by iterating over all triples of $V(G)$ in time $O(n^3)$. Then, for each edge $uv\in E(G)$, do the following:
	\begin{enumerate}
		\item Construct a graph $G_{uv}$ from $G$ by removing the edge $uv$ and vertices in $N(u) \cap N(v)$.
		\item Find a shortest $u\dd v$ path $P$ in $G_{uv}$, if one exists, by running breadth first search. If $P$ exists, let $C^1_{uv}$ be the induced cycle of $G$ with vertex set $P$.
		\item Find a non-shortest $u\dd v$ path $N$ in $G_{uv}$, if one exists, using Theorem \ref{thm:non_shortest_path}. If $N$ exists, let $C_{uv}^2$ be the induced cycle of $G$ with vertex set $N$.
		\item If $P$ above does not exist, let $L_{uv}=\varnothing$. If only $P$ above exists, let $L_{uv}=\{C^1_{uv}\}$. If both $P$ and $N$ above exist, let $L_{uv}=\{C_{uv}^1,C_{uv}^2\}$.
	\end{enumerate}
	
	Construct a set $L$ of cycles by starting with $L=\varnothing$, examining each $L_{uv}$ in turn, and adding the cycle $C \in L_{uv}$ to $L$ if and only if $L$ does not yet contain a cycle of length $|C|$. This can be done in time $O(n^2)$.

	If $|L|<2$, output ``$\cl(G) <3$''. If $|L| \geq 2$, choose two elements $C^1,C^2 \in L$, and output ``$\cl(G)\ge 3$'' and the list $C^0, C^1, C^2$. By Theorem \ref{thm:non_shortest_path}, the set $L_{uv}$ can be constructed in time $O(n^{18})$ for each edge $uv$, and so the total running time is $O(n^{20})$.

	We now prove correctness in the case when $G$ contains a $K_3$. Suppose first that the algorithm outputs ``$\cl(G) \geq 3$''. Since $N_{G_{uv}}(u) \cap N_{G_{uv}}(v)=\varnothing$ for every $uv \in E(G)$, it follows that $|C|>3$ for every $C \in L$. Thus, the lengths of the cycles $C^0,C^1,C^2$ are all distinct, and so the algorithm performs correctly.

	Now, suppose that the algorithm outputs ``$\cl(G)<3$''. It follows that $|L|<2$. Assume for a contradiction that $\cl(G) \geq 3$. Then, there exist distinct $l_1, l_2 >3$ and induced cycles $C_1$, $C_2$ in $G$ such that $|C_i|=l_i$. For $i \in \poi_2$, let $u_iv_i$ be an edge of $C_i$, and let $P_i$ be the path obtained from $C$ by removing the edge $u_iv_i$. Then, $P_i$ is a path from $u_i$ to $v_i$ in $G_{u_iv_i}$. If $P_i$ is not the shortest path from $u_i$ to $v_i$ in $G_{u_iv_i}$, then $|L_{u_iv_i}|=2$, contrary to the fact that $|L| < 2$. Thus, for $i \in \poi_2$, $P_i$ is a shortest path from $u_i$ to $v_i$ in $G_{u_iv_i}$. Therefore, $|C^1_{u_1v_1}|=|C_1|$ and $|C^1_{u_2v_2}|=|C_2|$, and thus, $L$ contains cycles of lengths $l_1$ and $l_2$, again contrary to the fact that $|L|<2$. This proves the algorithm works correctly when $G$ contains a $K_3$.

	Therefore, we may assume that $G$ is $K_3$-free but contains a $K_{2,2}$. Thus, the shortest induced cycle in $G$ has length four. We can find one cycle $C^0$ of length four by iterating over all 4-sets of $V(G)$ in time $O(n^4)$. Then, for each path $Q=v_1 \dd v_2 \dd v_3 \dd v_4$ in $G$, do the following:
	\begin{enumerate}

		\item Construct a graph $G_Q$ from $G$ by removing $N(v_2)\setminus \{v_1\}$ and $N(v_3)\setminus \{v_4\}$.
		\item Find a shortest $v_1\dd v_4$ path $P$ in $G_Q$, if one exists, by running breadth first search. If $P$ exists, let $C^1_Q$ be the induced cycle of $G$ with vertex set $P\cup \{v_2,v_3\}$.
		\item Find a non-shortest induced $v_1\dd v_4$ path $N$ in $G_Q$, if one exists, using Theorem \ref{thm:non_shortest_path}. If $N$ exists, let $C_Q^2$ be the induced cycle of $G$ with vertex set $N\cup \{v_2,v_3\}$.
		\item If $P$ above does not exist, let $L_Q=\varnothing$. If only $P$ above exists, let $L_Q=\{C^1_Q\}$. If both $P$ and $N$ above exist, let $L_Q=\{C_Q^1,C_Q^2\}$.
	\end{enumerate}
	
	Construct a set $L$ of cycles by starting with $L=\varnothing$, examining each $L_Q$ in turn, and adding the cycle $C \in L_Q$ to $L$ if and only if $L$ does not yet contain a cycle of length $|C|$. This can be done in time $O(n^4)$.
	
	If $|L|<2$, output ``$\cl(G) <3$''. If $|L| \geq 2$, choose two elements $C^1,C^2 \in L$, and output ``$\cl(G)\ge 3$'' and the list $C^0, C^1, C^2$. By Theorem \ref{thm:non_shortest_path}, the set $L_Q$ can be constructed in time $O(n^{18})$ for each 4-path $Q$, and so the total running time is $O(n^{22})$.
	
	Observe that deleting $N(v_2)\setminus \{v_1\}$ and $N(v_3)\setminus \{v_4\}$ ensures that each cycle $C\in L$ has length at least five since $Q$ has length three, and $P$ and $N$ have length at least two. Then, the correctness follows by the same argument as in the case of $K_3$. This concludes the proof.
	
\end{proof}

\section{Acknowledgments}

The authors thank Paul Seymour and Nicolas Trotignon for suggesting the problem to them and for several helpful discussions.

\bibliographystyle{plain}
\bibliography{ref}

\end{document}